\tikzset{->-/.style={decoration={
  markings,
  mark=at position #1 with {\arrow{>}}},postaction={decorate}}}
\DeclareMathOperator{\End}{End}
\DeclareMathOperator{\Gal}{Gal}
\DeclareMathOperator{\ord}{ord}
\newtheorem{theorem}{Theorem}[section]
\newtheorem*{theorem*}{Theorem}
\newtheorem{lemma}[theorem]{Lemma}
\newtheorem{proposition}[theorem]{Proposition}
\newtheorem{corollary}[theorem]{Corollary}
\newtheorem{defn}[theorem]{Definition}
\numberwithin{equation}{section}
\newtheorem{lthm}{Theorem} 
\newcommand{\tb}{\textup{b}}
\newcommand{\tg}{\textup{g}}
\theoremstyle{remark}
\newtheorem{remark}[theorem]{Remark}
\newtheorem{example}[theorem]{Example}
\newcommand{\Deck}{\mathrm{Deck}}
\newcommand{\R}{\mathbb{R}}
\newcommand\EatDot[1]{}
\newcommand{\n}{\natural}
\newcommand{\bE}{\mathbf{E}}
\newcommand{\Fp}{\FF_p}
\newcommand{\cG}{{\mathcal{G}}}
\newcommand{\cZ}{\mathcal{Z}}
\newcommand{\cX}{\mathcal{X}}
\newcommand{\QQ}{\mathbb{Q}}
\newcommand{\ZZ}{\mathbb{Z}}
\newcommand{\FF}{\mathbb{F}}
\newcommand{\Zp}{\mathbb{Z}_p}
\newcommand{\NN}{\mathbb{N}}
\definecolor{Green}{rgb}{0.0, 0.5, 0.0}
\newcommand{\cO}{\mathcal{O}}
\newcommand{\Q}{\mathbb{Q}}
\newcommand{\F}{\mathbb{F}}
\newcommand{\Z}{\mathbb{Z}}
\newcommand{\bG}{\mathbf{G}}
\newcommand{\Epn}{E^{(p^n)}}
\newcommand{\Eppn}{E'^{(p^n)}}
\newcommand{\EE}{\mathbb{E}}
\newcommand{\cV}{\mathcal{V}}
\newcommand{\VV}{\mathbb{V}}
\newcommand{\Spec}{\mathrm{Spec}}
\newcommand{\fY}{\mathfrak{Y}}
\newcommand{\fG}{\mathfrak{G}}
\newcommand{\fP}{\mathfrak{P}}
  \DeclareFontFamily{U}{wncy}{}
  \DeclareFontShape{U}{wncy}{m}{n}{<->wncyr10}{}
  \DeclareSymbolFont{mcy}{U}{wncy}{m}{n}
  \DeclareMathSymbol{\sha}{\mathord}{mcy}{"58}
  \DeclareMathSymbol{\zhe}{\mathord}{mcy}{"11}
  \renewcommand{\r}{\mathfrak r}
\renewcommand{\s}{\mathfrak s}
\renewcommand{\t}{\mathfrak t}
\newcommand{\cc}{\mathfrak c}
\newcommand{\fg}{\mathfrak{g}}
\newcommand{\fb}{\mathfrak{b}}
\newcommand{\cY}{\mathcal{Y}}
\title[Isogeny graphs and the Verschiebung map]{Isogeny graphs with level structures arising from the Verschiebung map}
\let\@wraptoccontribs\wraptoccontribs
\DeclareMathSymbol{\lsb@l}{\mathalpha}{letters}{`l}
\author[A. Lei]{Antonio Lei}
\address[Lei]{Department of Mathematics and Statistics\\University of Ottawa\\
150 Louis-Pasteur Pvt\\
Ottawa, ON\\
Canada K1N 6N5}
\email{antonio.lei@uottawa.ca}
\author[K. Müller]{Katharina Müller}
\address[Müller]{Institut f\"ur theorethische Informatik, Mathematik und Operations Research\\
Universit\"at der Bundeswehr M\"unchen\\
Werner-Heisenberg-Weg 39\\
85577 Neubiberg\\
Germany}
\email{katharina.mueller@unibw.de}
\subjclass[2020]{05C30 (primary), 11G20, 11R23, 14G17, 14K02 (secondary)}
\keywords{Isogeny graphs with level structures, the Verschiebung map, oriented supersignular elliptic curves}
\begin{document}
\begin{abstract}
We enhance an isogeny graph of elliptic curves by incorporating level structures defined by bases of the kernels of iterates of the Verschiebung map. We extend several previous results on isogeny graphs with level structures defined by geometric points to these graphs. Firstly, we prove that these graphs form $\mathbb{Z}_p$-towers of graph coverings as the power of the Verschiebung map varies. Secondly, we prove that the connected components of these graphs display a volcanic structure.
\end{abstract}

\maketitle

\section{Introduction}

Throughout this article, we fix a prime number $p$. Let $X$ be a finite connected graph. Vallières and McGown--Vallières \cite{vallieres,vallieres2,vallieres3} initiated the study of the Iwasawa theory of graph coverings by establishing an asymptotic formula for the $p$-part of the number of spanning trees in a {$\Zp$-covering of $X$, where $\Zp$ denotes the ring of $p$-adic integers}. This formula is analogous to the classical result of Iwasawa \cite{iwasawa73} on class numbers inside a $\Zp$-extension of a number field. Several authors have proven various results in the context of graph coverings, which resemble their counterparts in classical Iwasawa theory. These results can be found in the works of Gonet \cite{gonet-thesis,gonet22}, Lei--Vallières \cite{leivallieres}, Ray--Vallières \cite{anwesh-daniel}, Kleine--Müller \cite{KM,KM1}, Kataoka \cite{kataoka}, and Pengo--Vallières \cite{PV}.

Isogeny graphs refer to graphs, where the vertices represent isomorphism classes of elliptic curves defined over a finite field of characteristic $p$, and the edges represent $l$-isogenies between two elliptic curves, where $l$ is a prime number different from $p$. When the vertices represent supersingular elliptic curves, such graphs have important applications in cryptography; see \cite{CLG,DJP,EKHLMP}. In recent years, there has been a growing interest in understanding the structure of isogeny graphs enhanced with level structures (given by subgroups or torsion points of the elliptic curves). {For example, Arpin \cite{arpin} and Roda \cite{thesis-roda} studied properties of isogeny graphs of supersingular elliptic curves enhanced with $\Gamma_0(N)$-level and $\Gamma(N)$-level structures, respectively. Goren--Kassaei \cite{gorenkassaei} studied similar questions for $\Gamma_1(N)$-level structures in both ordinary and supersingular settings. Codogni-Lido recently developed a general framework for studying isogeny graphs with any level structure arising from a congruence subgroup in \cite{codogni-lido}. In \cite{LM1,LM2},} the authors of the present article investigated how isogeny graphs with level structures result in interesting towers of graph coverings resembling $p$-adic Lie extensions studied in the Iwasawa theory of number fields as one varies the levels. 

In \cite{LM1}, the following $\Zp$-towers were studied. Let $N\ge1$ be an integer coprime to two rational prime numbers $p$ and $l$. Let $k$ be a finite field of characteristic $p$. We define $G_N^m$ as the graph whose vertices consist of isomorphism classes of pairs $(E,P)$, where $E$ is an \textit{\textbf{ordinary}} elliptic curve defined over $k$ and $P$ is a point of $E(\overline{k})$ of order $Np^m$; {the edges of $G_N^m$ are given by $l$-isogenies}. There is a natural projection map $G_N^{m+1}\rightarrow G_N^m$ given by $(E,p)\mapsto (E,[p]P)$. The main reason we chose to work with ordinary elliptic curves was the lack of points of order $p$ on supersingular elliptic curves defined over a field of characteristic $p$. In \cite{LM2}, we circumvented this issue by considering supersingular elliptic curves defined over finite fields of characteristic $q$, equipped with level structures given by points on $E$ of order $p^nN$, where $q\nmid pN$.

In this article, we propose an alternative approach. Instead of defining isogeny graphs with level structures given by geometric points, we consider our elliptic curves as group schemes and furnish our graphs with level structures originating from kernels of iterates of the Verschiebung map. This allows us to consider both \textbf{\textit{ordinary \underline{and} supersingular}} cases simultaneously.

From now on, $k$ denotes a positive integer, and we write $S$ for the $\F_p$-scheme $\Spec(\FF_{p^k})$. We fix a prime number $l\ne p$ and a non-negative integer $N$ coprime to $p$ and $l$. For an integer $n\ge0$, we define in Definition~\ref{def:graph} a directed graph $G_l^p(n,N)$, whose set of vertices is given by isomorphism classes of $(E,Q,P)$, where $E/\FF_{p^k}$ is an elliptic curve, $Q$ is a geometric point on $E$ of order $N$ and $P$ is a generator of $\ker(V_E^n)$ ($V_E$ is the Verschiebung map of $E$). The set of edges is given by $l$-isogenies between such triples. {Note that the vertices of $G_\ell^p(0,1)$ are simply isomorphism classes of elliptic curves without any level structure.} {The undirected graph underlying} $G_l^p(n,N)$ is denoted by $\bG_l^p(n,N)$.

We prove the following generalization of \cite[Theorem~A]{LM1}.
\begin{lthm}\label{thmA}
     Let $E/\FF_{p^k}$ be an elliptic curve representing a non-isolated vertex of $\bG_l^p(0,1)$. Let $Q$ be a point on $E$ of order $N$. There is an integer $n_0$ such that for all $n\ge n_0$, we can find a connected component $X_n$ of $\bG_l^p(n,N)$ that includes a vertex of the form $(E,Q,P)$ for some $P$ and that the graphs $(X_n)_{n\ge n_0}$ form a $\Zp$-tower of graph coverings (in the sense of Definition~\ref{def:tower}).
\end{lthm}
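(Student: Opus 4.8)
The plan is to follow the architecture of the proof of \cite[Theorem~A]{LM1}, replacing the étale level structures used there by the Verschiebung-kernel level structures of Definition~\ref{def:graph}; the point of this change of viewpoint is that the ordinary and supersingular cases can then be handled simultaneously.

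First I would record the two functorial facts that turn the forgetful maps $\bG_l^p(n+1,N)\to\bG_l^p(n,N)$ into graph coverings and organize the whole family into a Galois tower over $\bG_l^p(0,N)$. On the one hand, the iterated Verschiebung $V_E^n$ commutes with base change and with isogenies (being dual to the purely inseparable $F_E^n$), so an $l$-isogeny $\phi\colon E\to E'$ induces an isomorphism $\ker V_E^n\xrightarrow{\ \sim\ }\ker V_{E'}^n$ compatible with the transition maps $\ker V_E^{n+1}\to\ker V_E^n$; this is precisely what lets an edge of $\bG_l^p(0,N)$ be lifted, uniquely once a generator upstairs is fixed, to an edge of $\bG_l^p(n,N)$. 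On the other hand, rescaling the chosen generator of $\ker V_E^n$ defines an action of $\mathrm{Aut}(\ker V_E^n)$ on $\bG_l^p(n,N)$ over $\bG_l^p(0,N)$ commuting with all isogenies, so that $\bG_l^p(n,N)\to\bG_l^p(0,N)$ is a Galois covering with this group (the fibre over a vertex whose underlying curve has no extra automorphisms being a torsor under it). Passing to a quotient of $\varprojlim_n\mathrm{Aut}(\ker V_E^n)$ isomorphic to $\Zp$ produces the candidate $\Zp$-tower. Here I would lean on the structural description in Definition~\ref{def:graph} of the group schemes $\ker V_E^n$ and of their generators, in particular the fact that in both the ordinary and the supersingular case the inverse system $(\ker V_E^n)_n$ admits a compatible sequence of generators.

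Next I would choose the connected layers. Because $E$ is a non-isolated vertex of $\bG_l^p(0,1)$, the vertex $(E,Q)$ is non-isolated in $\bG_l^p(0,N)$ as well (transport $Q$ along an $l$-isogeny, using $l\nmid N$), so its connected component $X_0$ is a non-trivial graph. Pulling back the pro-$p$ tower of the previous step along $X_0\hookrightarrow\bG_l^p(0,N)$, its intermediate layers are controlled by a monodromy homomorphism $\rho\colon\pi_1(X_0)\to\Zp$; let $p^{n_0}\Zp$ be the closed image of $\rho$. For $n\ge n_0$ I take $X_n$ to be a connected component of $\bG_l^p(n,N)$ lying over $X_0$: then $X_n\to X_{n_0}$ is the connected $\ZZ/p^{\,n-n_0}$-covering of $X_{n_0}$ determined by $\rho$, each $X_n\to X_{n-1}$ is a connected degree-$p$ covering, and — since graph coverings are unramified — the fibre of $X_n$ over $(E,Q)$ is non-empty, so $X_n$ contains a vertex of the form $(E,Q,P)$ for a suitable generator $P$ of $\ker V_E^n$. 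By construction these $X_n$ assemble into a $\Zp$-tower in the sense of Definition~\ref{def:tower}.

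The main obstacle is the assertion that $\rho$ has open (in particular non-zero) image, i.e.\ that the tower genuinely grows. Concretely, one must show that some reduced closed walk in $X_0$ — a cyclic chain of $l$-isogenies $E\to E_1\to\cdots\to E$ whose composite is an endomorphism $\psi$ of $E$ of $l$-power degree — acts on $\varprojlim_n\ker V_E^n$ through an element of $\Zp^\times$ that is not a root of unity. In the ordinary case $\mathrm{End}(E)$ is an order $\cO$ in an imaginary quadratic field in which $p$ splits, $\ker V_E^n$ is free of rank one over $\cO/\overline{\p}^{\,n}$ for an appropriate prime $\overline{\p}$ of $\cO$ above $p$, and $\psi$ acts by multiplication by its image there; since $N(\psi)$ is a power of $l$, $\psi$ cannot be a root of unity in the completion $\cO_{\overline{\p}}\cong\Zp$, so as soon as $X_0$ contains a closed walk — which fails only if $X_0$ is a tree — the image of $\rho$ is open, and $n_0$ can be read off the resulting valuations. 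The supersingular case is where the real work lies: there $\ker V_E^n$ is infinitesimal with only the trivial geometric point, so ``generator'' must be read scheme-theoretically in Definition~\ref{def:graph}, and the mod-$\overline{\p}$ computation above is replaced by an argument using an orientation of $E$ (an optimal embedding of an imaginary quadratic order into $\mathrm{End}(E)$) — this is why orientations enter the picture — together with a verification that the resulting combinatorial objects still satisfy the axioms of a graph covering. I expect this last point, rather than any of the formal bookkeeping, to be the crux.
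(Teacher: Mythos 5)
Your overall architecture (transition coverings, a monodromy/voltage description over $X_0$, reduction to showing the monodromy image is open) is parallel to the paper's, but the non-degeneracy step is where your proposal genuinely falls short, and you have misplaced where the difficulty lies. The paper's key input is Lemma~\ref{connected-components}: for \emph{any} edge of $\cX(0,N)$ given by an $l$-isogeny $\phi$, the dual isogeny $\hat\phi$ furnishes a second, distinct edge, and the two-step walk $\phi$ then $\hat\phi$ sends $(E,Q,P)$ to $(E,[l]Q,[l]P)$; iterating $r$ times, where $l^r\equiv 1\pmod N$, yields a closed walk at $(E,Q)$ whose monodromy on $\ker(V^n)$ is multiplication by the scalar $l^r$. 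Since $l^r$ is a rational integer different from $\pm1$, it is not a root of unity in $\Zp^\times$, so the closure of $\langle l^r\rangle$ has finite index in $\Zp^\times$ and the number of connected components of $\cX(n,N)$ is uniformly bounded in $n$. This uses only that $E$ is non-isolated and is word-for-word the same for ordinary and supersingular curves, because the acting element is the literal scalar $l$, not an endomorphism whose image must be controlled; no orientation is needed (orientations enter only in Theorem~\ref{thmB}). Your proposal misses this: in the ordinary case you condition openness on the existence of a reduced closed walk and leave the ``tree'' case unresolved (the resolution is exactly the pair $(\phi,\hat\phi)$ of parallel edges, which always produces a non-backtracking cycle in these multigraphs), and you never require your cyclic chain of $l$-isogenies to return to the pair $(E,Q)$ rather than merely to $E$ (this is why the exponent $r$ with $l^r\equiv1\pmod N$ appears in the paper); in the supersingular case you explicitly defer the argument, predicting that orientations and new scheme-theoretic work are the crux. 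As written, the central non-triviality of the theorem is therefore not established precisely in the case the theorem was designed to cover.

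There are also problems in the group-theoretic bookkeeping. The assertion that $\bG_l^p(n,N)\to\bG_l^p(0,N)$ is Galois with group $\mathrm{Aut}(\ker(V_E^n))$ is unjustified: automorphisms of $E$ fixing $Q$ can act non-freely on fibres (e.g.\ $-1$ when $2Q=0$, or curves with extra automorphisms), the graphs are disconnected, and the monodromy naturally takes values in $\Zp^\times$ rather than $\Zp$, so the prime-to-$p$ torsion must be dealt with (the paper does this when bounding the index of $\overline{\langle l^r\rangle}$ in $\Zp^\times$). Moreover, ``passing to a quotient of $\varprojlim_n\mathrm{Aut}(\ker V_E^n)$ isomorphic to $\Zp$'' would produce quotient graphs, whereas the theorem demands connected components of $\bG_l^p(n,N)$ themselves. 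The paper's route avoids all of this: it encodes the levels by a $\Zp$-valued voltage assignment built from the rank-one module $\varprojlim_n\ker(V^n)$, notes that the component count is non-decreasing and bounded (Lemma~\ref{connected-components}), hence stabilizes at some $n_0$, and then Lemmas~\ref{connected component galois} and \ref{lem:voltage}, via Corollary~\ref{cor:voltage-assignment}, show that the unique components above a fixed one form a tower with Galois groups $\ZZ/p^{n-n_0}\ZZ$. Your appeal to $\pi_1(X_0)$ and ``the connected covering determined by $\rho$'' presupposes the very freeness/Galois properties that fail before this reduction, so this part would need to be redone along those lines.
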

\begin{remark}
    A vertex $E$ of $\bG_\ell^p(0,1)$ is isolated if and only if for {each $\ell$-isogeny $\phi:E\rightarrow E'$ the curve $E'$ is} not defined over $\mathbb{F}_{p^k}$.
\end{remark}

The structure of isogeny graphs consisting of ordinary elliptic curves (without level structures) was studied by Kohel \cite{kohel}. In particular, he proved that these graphs can be described as unions of the so-called volcano graphs. Colò--Kohel \cite{colokohel} introduced the notion of orientation on supersingular elliptic curves. Isogeny graphs defined by oriented supersingular elliptic curves exhibit a similar volcanic structure (see \cite{arpin-win,arpin-et-all,onuki,XZQ}). In \cite{LM2}, we describe the structure of the connected components of an isogeny graph of oriented supersingular elliptic curves enhanced with level structures. See Theorems~6.21, 6.22 and 6.23 of \textit{op. cit.} In the present article, we generalize these results to isogeny graphs whose level structures are given by kernels of iterates of the Verschiebung map, {which we denote by $\cG_l^p(n,N)$ (see Definition~\ref{def:oriented-graph}). To each connected component of $\cG_l^p(n,N)$, we attach an imaginary quadratic field (called the CM field of the connected component). We prove:
\begin{lthm}[{Theorems~\ref{thm:structure1}, \ref{thm:structure2} and \ref{thm:structure3}}]\label{thmB}
Let $\mathcal{Y}_n$ be a connected component of $\mathcal{G}_l^p(n,N)$ and let $K$ denote the CM field of $\cY_n$. 
\begin{itemize}
\item If $l$ splits in $K$, then $\cY_n$ is an undirected tectonic $l$-volcano.
\item If $l$ is ramified in $K$, then $\mathcal{Y}_n$ is an undirected $l$-volcano and the crater is a cycle graph.
    \item If $l$ is inert in $K$, then $\mathcal{Y}_n$ is an undirected $l$-volcano and the crater is disconnected.
    \end{itemize}
\end{lthm}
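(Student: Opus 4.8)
The plan is to transfer the volcano structure already established for isogeny graphs of oriented supersingular elliptic curves (Kohel \cite{kohel}, Colò--Kohel \cite{colokohel}, Onuki \cite{onuki}, and the level-structure refinements of \cite{LM2}) to the graphs $\cG_l^p(n,N)$. The crucial point is that the level structure coming from $\ker(V_E^n)$ and from the $N$-torsion is transported \emph{rigidly} along $l$-isogenies, so that adding it does not change the local shape of the graph; only the global (crater) structure, which is controlled by a class-group torsor, has to be analysed, and that analysis is governed by the splitting behaviour of $l$ in the CM field exactly as in the unlevelled setting.

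\emph{Step 1: rigidity of the level structure.} Let $\phi\colon E\to E'$ be an $l$-isogeny of supersingular elliptic curves over $\FF_{p^k}$. Since $l\neq p$, the order $l$ of $\ker\phi$ is coprime to $p^n$, so $\phi$ restricts to an isomorphism $E[p^n]\to E'[p^n]$ of finite flat group schemes; combining this with the functoriality identity $\phi\circ V_E^n=V_{E'}^n\circ\phi^{(p^n)}$ shows that $\phi$ maps $\ker(V_E^n)$ isomorphically onto $\ker(V_{E'}^n)$. Likewise $l\nmid N$ gives an isomorphism $E[N]\to E'[N]$. Consequently an $l$-isogeny out of $E$ extends uniquely to an $l$-isogeny $(E,Q,P)\to(E',\phi(Q),\phi(P))$ of triples, and, combined with the push-forward of orientations, this shows that the edges of $\cG_l^p(n,N)$ out of a vertex $(E,\iota,Q,P)$ are in natural bijection with the $l$-isogenies out of the oriented curve $(E,\iota)$ — up to the customary correction at vertices whose triple has fewer automorphisms than $E$, treated as in \cite{LM2}. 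Equivalently, forgetting the level structure is a covering of oriented graphs on each connected component.

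\emph{Step 2: the oriented base graph.} Fix a connected component $\cY_n$ and a vertex $(E,\iota,Q,P)$ of it, with primitive orientation $\iota\colon\cO\hookrightarrow\End(E)$ and $K=\cO\otimes_\ZZ\QQ$; one checks that $K$ is invariant along edges, so it is the well-defined CM field of $\cY_n$. By the theory of oriented supersingular curves, each $l$-isogeny out of $(E,\iota)$ is horizontal, ascending, or descending, and the number of horizontal ones equals the number of primes of $\cO$ lying above $l$: this is $2$ when $l$ splits in $K$, $1$ when $l$ ramifies, and $0$ when $l$ is inert, provided $l$ does not divide the conductor of $\cO$; when $l$ does divide the conductor — which happens precisely away from the crater — there are $1$ ascending and $l$ descending $l$-isogenies and no horizontal ones. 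The horizontal $l$-isogenies organise the crater into a torsor under a suitable quotient of $\mathrm{Cl}(\cO_{\mathrm{crater}})$ twisted by the level structure, and the descending isogenies join each crater vertex to the floor, where the $l$-power conductor is maximal.

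\emph{Step 3 and the main obstacle.} By Step 1 the local shape of $\cY_n$ at every vertex agrees with that of the oriented base graph of Step 2, so $\cY_n$ is an $l$-volcano whose crater is controlled by the number of primes of $\cO_{\mathrm{crater}}$ above $l$: two when $l$ splits in $K$, giving the extra structure that makes $\cY_n$ an undirected tectonic $l$-volcano; one when $l$ ramifies, making the crater a cycle graph; and none when $l$ is inert, so that the crater carries no horizontal edges and is disconnected. Passing to the underlying undirected graph yields the three cases, which is the combined content of Theorems~\ref{thm:structure1}, \ref{thm:structure2} and \ref{thm:structure3}. I expect the main difficulty to be Step 1 in the supersingular regime, where $\ker(V_E^n)$ is an \emph{infinitesimal}, non-étale group scheme: the notion of a generator $P$ of $\ker(V_E^n)$ and the induced isomorphism of such data along an isogeny must be set up and verified scheme-theoretically, and this — together with the bookkeeping of automorphisms at the exceptional vertices — is where volcano arguments are traditionally most error-prone.
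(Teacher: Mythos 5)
Your Steps 1 and 2 are sound and correspond to what the paper does (the rigidity of the level structure along $l$-isogenies is Remark~\ref{rk:commute} and Lemma~\ref{lem:cover}, and the unlevelled/level-$0$ volcano structure is Lemma~\ref{lem:l-volcano} together with Remark~\ref{rem:level-preserving}). The gap is in Step 3, which is where the actual content of the theorem lies. Knowing that every crater vertex of $\cY_n$ has two (resp.\ one, zero) horizontal edges does \emph{not} by itself determine the global shape of the depth-zero subgraph $Z_n$ of $\cY_n$: $Z_n$ is a nontrivial covering of the level-$0$ crater $Z_0$, and the conclusion you want is a statement about this covering, not about local edge counts. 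In the split case the assertion ``two horizontal directions, hence a tectonic $l$-volcano'' is false as stated: the paper exhibits a connected \emph{Galois} covering $Y'$ of a tectonic crater (the $\ZZ/3\ZZ$-voltage example after Lemma~\ref{lem:galois-tectonic}) that is not a tectonic crater, because property e) of Definition~\ref{def:crater-tec} fails. What is actually needed, and what the paper proves, is: (i) that $\cY_n/\cY_0$ and then $Z_n/Z_0$ are Galois coverings, which is obtained by realizing $\cY_n$ as a connected component of a derived graph of a $\ZZ/p^n\ZZ$-valued voltage assignment (Lemmas~\ref{connected component galois}, \ref{lem:Gal}, \ref{lem:galois}); (ii) that the two coloring maps $\fb$ and $\fg$ on $Z_n$, given by the two primes of $\cO$ above $l$, commute — this uses that the level structure is transported compatibly by both horizontal directions; and (iii) the purely combinatorial Lemma~\ref{lem:galois-tectonic}, that a connected Galois covering of a tectonic crater with commuting color maps is again a tectonic crater. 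None of (i)--(iii) appears in your proposal; the phrase ``torsor under a suitable quotient of $\mathrm{Cl}(\cO)$ twisted by the level structure'' gestures at an alternative route but is never defined or used to verify the tectonic-crater axioms.

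The ramified and inert cases have the same issue in milder form. A covering of a cycle graph is a priori a disjoint union of cycles, so to conclude that the crater of $\cY_n$ is a single cycle you must prove that $Z_n$ is connected; the paper does this by rerouting any path in $\cY_n$ between depth-zero vertices into a purely horizontal path (Remark~\ref{remark-connected}, using that a descending run followed by the matching ascending run is multiplication by a power of $l$, which can be replaced by $\widehat\phi\circ\phi$ for a horizontal $\phi$), and then invokes the Galois property (Lemma~\ref{lem:galois-non-tectonic}, with the reduction to $p^n>2$ in the proof of Theorem~\ref{thm:structure1} to avoid the $2$-sheeted exception). Your proposal asserts the cycle structure without this connectedness argument. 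The inert case is essentially fine as you state it (no horizontal edges, so the depth-zero part is totally disconnected, and depth preservation gives the volcano shape). So the proposal identifies the right reduction but omits the covering-theoretic and combinatorial core — the Galois property of $Z_n/Z_0$, the commutation of the two horizontal maps, and Lemmas~\ref{lem:galois-tectonic}/\ref{lem:galois-non-tectonic} — without which Step 3 does not go through.
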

See Definitions~\ref{def:crater} and \ref{def:volcano} for the terminologies used in the statement of Theorem~\ref{thmB}.}

In Appendix \ref{app}, we study an inverse problem of tectonic craters relating them to isogeny graphs of oriented supersingular elliptic curves. This partially generalizes previous works of Bambury--Campagna--Pazuki \cite{pazuki} and the current authors \cite{LM1} in the ordinary case. While it is possible to include level structures coming from geometric points, our techniques do not extend to level structures arising from the Verschiebung map.

\subsection*{Acknowledgement}
The authors are indebted to the anonymous referee for numerous comments and suggestions on an earlier version of the article, which led to many improvements. We thank Hugo Chapdelaine, Daniel Larsson, and Riccardo Pengo for interesting discussions during the preparation of this article. AL's research is supported by the NSERC Discovery Grants Program RGPIN-2020-04259 and RGPAS-2020-00096.

\section{Notation}\label{S:notation}
Throughout this article, $E$ always denotes an elliptic curve over $\FF_{p^k}$. Let $\pi:E\to S=\Spec(\FF_{p^k})$ be the structure map. We write $F_E:E\to E^{(p)}$ and $V_E:E^{(p)}\to E$ for the Frobenius and Verschiebung maps attached to $E$, respectively. Here, $E^{(p)}$ is the pull-back of $\pi$ and $F_{{S}}$, where $F_S:S\rightarrow S$ denotes the absolute Frobenius (on affine rings, this corresponds to the endomorphism $x\mapsto x^p$). Note that $F_E$ and $V_E$ are dual isogenies of each other. The reader is referred to \cite[Chapter~12]{katz-mazur} for a more detailed discussion on these maps.

For a positive integer $n$, we write $F_E^n$ for the composition of Frobenii
\[
E\to E^{(p)}\to \cdots\to \Epn
\]
and define $V_E^n:\Epn\to E$ similarly. {Note that the kernel of $V_E^n$ is cyclic of order $p^n$.}
This allows us to identify $\Epn$ with $E/\ker(F_E^n)$ and $E$ with $\Epn/\ker(V_E^n)$.
When confusion may not arise, we remove the subscript $E$ from the notation {writing $F$ instead of $F_E$ and $V$ instead of $V_E$}.

\begin{remark}\label{rk:commute}
   Both $F$ and $V$ commute with isogenies. 
In particular, if $\phi\colon E\to E'$ is an isogeny of elliptic curves, $\phi$ induces a well-defined isogeny $\phi_n:\Epn\to\Eppn$ that maps $\ker(V_E^n)$ to $\ker(V_{E'}^n)$. When no confusion may arise, we simply write $\phi$ for $\phi_n$.
\end{remark} 

Given a graph $X$ (that is directed or undirected), we write $\VV(X)$ and $\EE(X)$ for the set of vertices and edges of $X$, respectively. In this paper, we allow for multiple edges and loops. This is crucial for our purposes, since there can be more than one isogeny relating two (equivalence classes of) elliptic curves.

\begin{defn}
A morphism of graphs $\phi\colon Y\to X$ consists of two maps $ \VV(Y)\to \VV(X)$ and $\EE(Y)\to \EE(X)$ such that if $e$ is an edge connecting $v$ and $w$, then $\phi(e)$ is an edge connecting $\phi(v)$ and $\phi(w)$. 
We call a surjective morphism of graphs $\pi\colon Y\to X$ a covering of graphs if $\pi$ is locally a bijection.

Let $Y/X$ be a covering of finite directed graphs with the projection map $\pi:Y\to X$. We say that $Y/X$ is a \textbf{$d$-sheeted covering} if each element of $\VV(X)$ has $d$ pre-images in $\VV(Y)$ under $\pi$.

The group of \textbf{deck transformations} of $Y/X$, denoted by $\Deck(Y/X)$, is the group of graph automorphisms $\sigma:Y\to Y$ such that $\pi\circ\sigma=\pi$.

We say that a $d$-sheeted covering $Y/X$ is \textbf{Galois} if $d=|\Deck(Y/X)|$. In this case, $\Deck(Y/X)$ is called the Galois group of the covering and is denoted by $\Gal(Y/X)$.
\end{defn}

We shall call the map that sends a directed graph to an undirected graph by ignoring the directions of the edges the \textit{\textbf{forgetful map}}. Note that we can regard any undirected graph as a directed one after replacing each edge by two directed edges with opposite directions {(see \cite[\S I.2.1]{serre})}.


\section{Defining isogeny graphs with level structures}
We introduce our definition of isogeny graphs with level structures arising from the Verschiebung map. Similarly to \cite[\S2]{LM1}, we analyze the number of connected components as the level varies, which constitutes the main ingredient of the proof of Theorem~\ref{thmA}. Recall that $k\ge1$ is an integer, $p$ and $l$ are fixed distinct prime numbers, and $N\ge 1$ is an integer that is coprime to $p$ and $l$.
\begin{defn}\label{def:graph}
\label{basic}
Let $n\ge0$ be an integer.
    \begin{itemize}
        \item[i)] Let $E$ and $E'$ be elliptic curves defined over $\FF_{p^k}$. Let $Q$ and $Q'$ be geometric points of order $N$ on $E$ and $E'$, respectively. Let $P_n$ and $P_n'$ be generators of $\ker(V_E^n\colon \Epn\to E)$ and $\ker (V_{E'}^n\colon \Eppn\to E')$, respectively. We say that tuples $(E,Q,P_n)$ and $(E',Q',P_n')$ are equivalent if there is an isomorphism ${\psi}\colon E\to E'$ such that ${\psi}(Q)=Q'$ and ${\psi}(P_n)=P_n'$. {By an abuse of notation, we shall denote the equivalence class containing the triple $(E,Q,P_n)$ by the same symbol.}
                \item[ii)] Let $E$ and $E'$ be elliptic curves defined over $\FF_{p^k}$. Let $\phi,\phi':E\to E'$ be isogenies. We say that $\phi$ and $\phi'$ are equivalent if they have the same kernel. 
        \item[iii)]   Let $G_l^p(n,N)$ be the directed graph given by:
    \begin{itemize}
        \item The vertices of $G_l^p(n,N)$ are the equivalence classes of tuples $(E,Q,P_n)$ given as in  i).
    \item The directed edges from $(E,Q,P_n)$ to $(E',Q',P_n')$ are the equivalence classes of $l$-isogenies $\phi\colon E\to E'$ such that $\phi(Q)=Q'$ and {$\phi_n(P_n)=P_n'$, where $\phi_n:E^{(p^n)}\rightarrow (E')^{(p^n)}$ is the isogeny discussed in Remark~\ref{rk:commute}}.
    \end{itemize}
    \end{itemize}
\end{defn}
If no confusion may arise, we shall drop the subscript $n$ and simply write $P$ for $P_n$. When $n=0$ and $N=1$, we may drop $P_n$ and $Q$ from the notation and simply write $E$ for a vertex of the graph $G_l^p(0,1)$. We write $\bG_l^p(n,N)$ for the image of $G_l^p(n,N)$ under the forgetful map.
\begin{lemma}\label{lem:cover}
    Let $n\ge1$ be an integer. The map $(E,Q,P)\mapsto (E,Q,V(P))$ induces a {degree-$p$} graph covering
    \[\pi_{n,n-1}\colon G_l^p(n,N)\to G_l^p(n-1,N).\]
\end{lemma}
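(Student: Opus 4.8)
The plan is to verify directly that the map $\pi_{n,n-1}\colon (E,Q,P)\mapsto (E,Q,V(P))$ is a well-defined morphism of directed graphs, then show it is surjective and locally bijective, and finally count fibers to see each vertex has exactly $p$ preimages. First I would check that the assignment on vertices is well-defined: if $P$ generates $\ker(V_E^n)$, then $V(P)$ lies in $\ker(V_E^{n-1})$ (since $V^{n-1}(V(P)) = V^n(P) = 0$) and in fact generates it, because $V\colon \ker(V_E^n)\to\ker(V_E^{n-1})$ is surjective (both groups are cyclic, of orders $p^n$ and $p^{n-1}$, and $V$ has kernel of order $p$ on the relevant piece). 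Compatibility with the equivalence relation on triples is immediate since any isomorphism $\psi\colon E\to E'$ commutes with $V$ by Remark~\ref{rk:commute}. On edges, an $l$-isogeny $\phi\colon E\to E'$ with $\phi_n(P)=P'$ satisfies $\phi_{n-1}(V_E(P)) = V_{E'}(\phi_n(P)) = V_{E'}(P')$, again by Remark~\ref{rk:commute} (the Verschiebung maps commute with $\phi$ through the tower), so $\phi$ descends to an edge of $G_l^p(n-1,N)$ with the same kernel; this shows $\pi_{n,n-1}$ is a graph morphism.

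Next I would establish that $\pi_{n,n-1}$ is a covering, i.e.\ locally bijective. Fix a vertex $(E,Q,P)$ of $G_l^p(n,N)$ and an edge $\bar\phi\colon (E,Q,V(P))\to (E',Q',\bar P')$ in $G_l^p(n-1,N)$, represented by an $l$-isogeny $\phi\colon E\to E'$. I claim there is a unique lift: $\phi$ is forced (the kernel of the lift must project to $\ker\phi$, but an $l$-isogeny out of $E$ is determined by its kernel, and the source triple already fixes $\phi$), and then the target triple is forced to be $(E', Q', \phi_n(P))$. The only thing to check is that $\phi_n(P)$ is a generator of $\ker(V_{E'}^n)$ — this holds because $\phi_n$ restricts to an isomorphism $\ker(V_E^n)\xrightarrow{\sim}\ker(V_{E'}^n)$ (an $l$-isogeny with $l\ne p$ is an isomorphism on $p$-power torsion / on the connected-étale pieces defining these kernels) — and that $\phi_n(P)$ projects under $V_{E'}$ to $\bar P' = \phi_{n-1}(V(P))$, which is exactly the identity $V_{E'}\circ\phi_n = \phi_{n-1}\circ V_E$. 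The same argument applied to incoming edges (using dual isogenies, or symmetrically) gives bijectivity on the star of each vertex, so $\pi_{n,n-1}$ is a covering.

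Finally, for the degree: the fiber over a vertex $(E,Q,P_{n-1})$ consists of triples $(E,Q,P_n)$ with $V(P_n) = P_{n-1}$, and the set of such $P_n$ is a torsor under $\ker(V_E\colon \ker(V_E^n)\to\ker(V_E^{n-1}))$, which has order $p$; one must also check these $p$ lifts are pairwise inequivalent, which follows because an automorphism of $E$ fixing $Q$ and $P_{n-1}=V(P_n)$ and sending $P_n$ to another lift $P_n'$ would have to be the identity on the cyclic group $\ker(V_E^n)$ generated by $P_n$ (distinct generators mapping to the same $P_{n-1}$ differ by an element of $1 + p\Zp/p^n\Zp$, and such an automorphism of a cyclic $p$-group is not induced by $\pm 1$, hence not by any automorphism of $E$ unless it is trivial — here one uses that $\mathrm{Aut}(E)$ acts on $\ker(V_E^n)\cong\Z/p^n$ through a subgroup not containing nontrivial such elements). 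Hence every vertex has exactly $p$ preimages, and $\pi_{n,n-1}$ is a degree-$p$ covering.

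I expect the main obstacle to be the fiber-size count — specifically, arguing that the $p$ lifts of a given triple are genuinely inequivalent, which requires pinning down how $\mathrm{Aut}(E)$ acts on $\ker(V_E^n)$ and checking no automorphism identifies two distinct lifts; the morphism and local-bijectivity parts are essentially formal consequences of Remark~\ref{rk:commute} and the fact that $l\ne p$.
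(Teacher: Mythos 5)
Your morphism and local-bijectivity steps match the paper's proof in substance: the paper likewise uses that $V$ commutes with isogenies (Remark~\ref{rk:commute}) and that the set of equivalence classes of $l$-isogenies out of (or into) a vertex depends only on $E$ and not on the level structure, which is exactly your observation that an $l$-isogeny, having degree prime to $p$, carries generators of $\ker(V_E^n)$ to generators of $\ker(V_{E'}^n)$. The genuine gap is in how you handle ``generator'' at the vertex level and in the fiber count: throughout you treat $\ker(V_E^n)$ as an abstract cyclic group of order $p^n$ whose generators are its elements of exact order $p^n$, and you argue via surjectivity of $V$ on points and point-fibers of size $p$. That reasoning is valid only when $\ker(V_E^n)$ is \'etale, i.e.\ when $E$ is ordinary. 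The entire point of this paper is to include supersingular $E$, for which $V$ is inseparable and $\ker(V_E^n)$ is an infinitesimal group scheme with trivial group of rational points; there ``generator'' must be taken in the Drinfeld--Katz--Mazur scheme-theoretic sense, the naive order-of-an-element argument says nothing, and the assertion that $P$ generates $\ker(V^n)$ if and only if $V(P)$ generates $\ker(V^{n-1})$ is precisely \cite[Proposition~12.3.2(3)]{katz-mazur}, which is what the paper's proof invokes in place of your cyclic-group computation. Your description of the fiber as a torsor under a kernel ``of order $p$'' similarly has no content at the level of points in the supersingular case.

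Concerning the degree count: the paper's own proof in fact never verifies the degree-$p$ assertion (it establishes surjectivity on vertices, the morphism property, and local bijectivity on edges), so you attempted more than the paper does; but your key step --- that the $p$ lifts of a given triple are pairwise inequivalent because $\mathrm{Aut}(E)$ acts on $\ker(V_E^n)\cong\ZZ/p^n\ZZ$ through a subgroup meeting $1+p\ZZ/p^n\ZZ$ trivially --- is asserted rather than proved, and as stated it fails for $p=2$, where $-1$ acts as $-1\in 1+2\ZZ/2^n\ZZ$, so for $N\le 2$ an automorphism can identify distinct lifts. A correct sheet count would also have to say over which field the lifted generators are taken and control the action of the automorphisms fixing $Q$, none of which is pinned down in your sketch (and none of which can even be phrased point-theoretically in the supersingular case). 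So the covering-morphism part of your argument stands once the generator statement is replaced by the Katz--Mazur input, but the degree-$p$ portion is not established as written.
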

\begin{proof}
   It follows from \cite[Proposition 12.3.2(3)]{katz-mazur} that $P\in \Epn({\FF_{p^k}})$ generates $\ker(V^n)$ if and only if $V(P)$ generates $\ker(V^{n-1})$. Therefore, $\pi_{n,n-1}(\VV(G_l^p(n,N)))=\VV(G_l^p(n-1,N))$. To show that $\pi_{n,n-1}$ is a morphism of graphs, we need to show that if there is an edge  $e\in\EE(G_l^p(n,N))$ linking $(E,Q,P)$ to $(E',Q',P')$, then there is an edge in $G_l^p(n-1,N)$ linking $(E,Q,V(P))$ to $(E',Q',V(P'))$.
   
    Let $\phi\colon E\to E'$ be an $l$-isogeny corresponding to $e$. Then $\phi(Q)=Q'$ and $\phi(P)=P'$.  As \[\phi(V(P))=V(\phi(P))=V(P'),\] the isogeny $\phi$ gives rise to an edge linking $(E,Q,V(P))$ to $(E',Q',V(P'))$. 
    Therefore, $\pi_{n,n-1}$ is indeed a morphism of graphs. 
    
    It remains to show that $\pi_{n,n-1}$ is a graph covering. Let $v\in \VV(G_\ell^p(n,N))$. We have to show that the edges starting (resp. ending) at $v$ are in one-to-one correspondence with the edges starting (resp. ending) at $\pi_{n,n-1}(v)$. Each edge is generated by a unique {equivalence} class of $\ell$-isogenies. The number of isomorphism classes of these isogenies depends only on the elliptic curve $E$, not on the level structure. Therefore, $\pi_{n,n-1}$ is locally {a} bijection, as desired. 
\end{proof}
\begin{corollary}\label{cor:cover}
    Let $n\ge m\ge 0$ be integers. The composition 
    \[\pi_{n,m}=\pi_{n,n-1}\circ \pi_{n-1,n-2}\circ \dots \circ \pi_{m+1,m}:G_l^p(n,N)\to G_l^p(m,N)\]
    is a graph covering.
\end{corollary}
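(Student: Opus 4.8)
The plan is to deduce Corollary~\ref{cor:cover} directly from Lemma~\ref{lem:cover} by an elementary induction on $n-m$, using only the fact that a composition of graph coverings is again a graph covering.

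First I would record the base cases: when $n=m$, the map $\pi_{m,m}$ is the identity on $G_l^p(m,N)$, which is trivially a covering; and when $n=m+1$, the claim is precisely Lemma~\ref{lem:cover}. For the inductive step, suppose $n>m+1$ and that $\pi_{n-1,m}=\pi_{n-1,n-2}\circ\dots\circ\pi_{m+1,m}$ is a graph covering. Then $\pi_{n,m}=\pi_{n-1,m}\circ\pi_{n,n-1}$, where $\pi_{n,n-1}$ is a covering by Lemma~\ref{lem:cover} and $\pi_{n-1,m}$ is a covering by the induction hypothesis, so it remains to check that the composite of two coverings is a covering.

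The latter is a standard fact, but I would include the short verification for completeness. If $f\colon Z\to Y$ and $g\colon Y\to X$ are coverings (surjective graph morphisms that are locally bijective), then $g\circ f$ is surjective as a composite of surjections, and it is a morphism of graphs since graph morphisms compose. For local bijectivity, fix a vertex $v\in\VV(Z)$; the map $f$ restricts to a bijection between the edges incident to $v$ and the edges incident to $f(v)$, and $g$ restricts to a bijection between the edges incident to $f(v)$ and the edges incident to $g(f(v))$. Composing these two bijections shows that $g\circ f$ restricts to a bijection on the edges incident to $v$, so $g\circ f$ is locally bijective, hence a covering.

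I do not anticipate any serious obstacle here: the entire content of the statement is already contained in Lemma~\ref{lem:cover}, and the corollary is purely a bookkeeping consequence of the fact that coverings are closed under composition. The only mild subtlety is to spell out the notion of "local bijectivity" consistently with how it was used in the proof of Lemma~\ref{lem:cover} (namely, a bijection on the sets of incident edges, separately for incoming and outgoing edges in the directed setting); once that is fixed, the argument is immediate. One could also remark that if one keeps track of sheet numbers, $\pi_{n,m}$ is $p^{\,n-m}$-sheeted, since each $\pi_{i,i-1}$ is degree $p$, though this refinement is not needed for the statement as given.
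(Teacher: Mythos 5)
Your proof is correct and follows the same route as the paper: the paper's proof simply invokes Lemma~\ref{lem:cover} together with the fact that a composition of graph coverings is a graph covering, which is exactly your induction with the standard local-bijectivity verification spelled out.
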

\begin{proof}
    This follows immediately from Lemma~\ref{lem:cover} since the composition of graph coverings is a graph covering.
\end{proof}
\begin{lemma}
\label{connected-components}
Let $\mathcal{X}$ be a connected component of $G_l^p(0,1)$ that consists not only of an isolated vertex. Let $\cX(n,N)$ be the subgraph of $G_l^p(n,N)$ whose vertices are of the form $(E,Q,P_n)$ where $E$ is a vertex of $\mathcal{X}$. 
    The number of connected components in $\cX(n,N)$ is uniformly bounded in $n$.
\end{lemma}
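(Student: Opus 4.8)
The plan is to reduce the count of connected components of $\cX(n,N)$ to a group-theoretic quantity that is visibly bounded independently of $n$. The mechanism is the deck-transformation / monodromy action on fibres of the covering $\pi_{n,0}\colon \cX(n,N)\to \cX(0,N)$, where $\cX(0,N)$ is the (fixed, finite) subgraph of $G_l^p(0,N)$ sitting over $\cX$. Since $\cX$ is connected and not an isolated vertex, $\cX(0,N)$ has at most $[\text{some fixed number, namely } \le \#\{\text{points of order }N\}\cdot\#\VV(\cX)]$ vertices and finitely many connected components, and it suffices to bound, for each connected component $\cX_0$ of $\cX(0,N)$, the number of connected components of $\pi_{n,0}^{-1}(\cX_0)$ uniformly in $n$.

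First I would fix a base vertex $v_0=(E_0,Q_0)\in\VV(\cX_0)$ and identify the fibre $\pi_{n,0}^{-1}(v_0)$ with the set $\mathrm{Gen}(\ker V_{E_0}^n)$ of generators of the cyclic group $\ker(V_{E_0}^n)\cong\ZZ/p^n\ZZ$, which is a torsor under $(\ZZ/p^n\ZZ)^\times$. Walking a closed path $\gamma$ in $\cX_0$ based at $v_0$ lifts, via the covering property established in Lemma~\ref{lem:cover} and Corollary~\ref{cor:cover}, to a permutation of this fibre; because the edges are $\ell$-isogenies and $\ell$ acts on $\ker(V^n)$ through multiplication by a scalar (the isogeny $\phi_n$ of Remark~\ref{rk:commute} restricted to a cyclic $p$-group is multiplication by some unit $c_\phi\in(\ZZ/p^n\ZZ)^\times$, since it is injective there and commutes with $V$), this permutation is itself multiplication by a unit. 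Hence the monodromy image lands in $(\ZZ/p^n\ZZ)^\times$ acting on the torsor, and the number of connected components of $\pi_{n,0}^{-1}(\cX_0)$ equals the index $[(\ZZ/p^n\ZZ)^\times : H_n]$, where $H_n\subseteq(\ZZ/p^n\ZZ)^\times$ is the monodromy subgroup.

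The crux is then showing $[(\ZZ/p^n\ZZ)^\times:H_n]$ is bounded in $n$; equivalently that $H_n$ is "large". I would do this by producing enough closed paths in $\cX_0$ whose monodromy units generate a subgroup of bounded index. The key observation: since $\cX$ is not an isolated vertex, $E_0$ admits an $\ell$-isogeny to some $E_1$ over $\FF_{p^k}$, and composing with the dual gives a closed path at $E_0$ whose associated endomorphism of $\ker(V^n)$ is multiplication by $\ell$ (the composite $\hat\phi\circ\phi=[\ell]$); more generally any cycle through $E_0$ contributes the scalar by which the corresponding endomorphism of $E_0$ acts on $\ker(V^n)$, and the relevant scalars are reductions of a fixed set of elements of $\End(E_0)$ (or of $\ZZ$, giving at least $\langle \ell\rangle\subseteq(\ZZ/p^n\ZZ)^\times$) that do not depend on $n$. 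So $H_n$ contains the reduction mod $p^n$ of a fixed finitely generated subgroup $\Lambda\subseteq\Zp^\times$, and $[(\ZZ/p^n\ZZ)^\times:H_n]\le[\Zp^\times:\overline{\Lambda}]$, a finite number independent of $n$ provided $\Lambda$ is infinite (equivalently, has a free part, which holds as soon as $\ell\not\equiv \pm1$ in the relevant quotient, and the finitely many exceptional cases are handled by passing further down the tower or noting $\ell^2\ne 1$ in $\Zp^\times$ for $p$ odd, with $p=2$ treated by a separate elementary argument). Summing over the finitely many components $\cX_0$ of $\cX(0,N)$ yields the uniform bound. I expect the main obstacle to be the bookkeeping in the last step: pinning down exactly which units occur as monodromy (handling endomorphisms vs. just $\ZZ$, and the orientation/direction subtleties of the directed graph) and verifying the generated subgroup genuinely has bounded index for every prime $p$, including $p=2$ and the small exceptional residues of $\ell$.
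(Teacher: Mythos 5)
Your proposal is correct and is essentially the paper's own argument recast in monodromy language: the paper likewise uses the loop obtained from an $\ell$-isogeny followed by its dual, iterated $r$ times with $\ell^r\equiv 1\pmod N$ so that the order-$N$ point returns to itself, to conclude that the subgroup $\langle \ell^r\rangle\subseteq(\ZZ/p^n\ZZ)^\times$ moves the level structure within a single connected component, and then bounds its index uniformly via the decomposition $\Zp^\times\cong\mu_q\times(1+2p\Zp)$ — exactly your final step. The only slips are cosmetic and harmless: the closed path based at $(E_0,Q_0)$ yields the scalar $\ell^r$ rather than $\ell$ (since $\hat\phi\circ\phi$ sends $Q_0$ to $[\ell]Q_0$), and the number of components over a component of $\cX(0,N)$ is at most, rather than exactly, the index $[(\ZZ/p^n\ZZ)^\times:H_n]$ because of possible automorphisms of the triples; neither affects the uniform bound, and no separate treatment of $p=2$ or exceptional residues of $\ell$ is needed because $\ell^r\neq\pm1$ in $\ZZ$.
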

\begin{proof}
    By definition, $\cX(n,N)$ is a finite graph. Let $E$ be a representative of a vertex in $\cX$, and let $Q$ be a fixed geometric point in $E$ of order $N$. We define a relation $\sim$ in $\VV(\cX(n,N))$ as follows: $(E,Q,P)\sim (E',Q',P')$ if these two vertices are in the same connected component in $\cX(n,N)$. It follows from the discussion in \cite[Remark~2.4]{LM1} that if $\phi$ is an $l$ -isogeny that induces an edge in $\cX(n,N)$ from $(E,Q,P)$ to $(E',Q',P')$, then the dual isogeny $\hat\phi$ induces an edge in $\cX(n,N)$ from $(E',Q',P')$ to {the triple $(lE,[l]Q,[l]P)$, which is equivalent to the triple }$(E,[l]Q,[l]P)$. This implies that $\sim$ is an equivalence relation.

    We show that the number of equivalence classes of $\sim$ is uniformly bounded in $n$. Let $r$ be an integer such that {$\ell^r\equiv 1\pmod N$; such an integer exists as $\ell$ is coprime to $N$. We have} $[l]^rQ=Q$. Then all vertices of the form $(E,Q,[l]^{rj} P)$, $j\in\NN$, are in the same equivalence class. Let $U_n\subset (\ZZ/p^n\ZZ)^\times $ be the subgroup generated by $l^r$. Recall that $\Z_p^\times\cong \mu_q\times (1+2p\Z_p)$, where $q=4$ if $p=2$ and $q=p-1$ otherwise. Here, $\mu_q$ denotes the group of $q$-th roots of unity. In particular, the index of $U_n$ in $(\ZZ/p^n\ZZ)^\times$ is uniformly bounded {by the index of $\overline{\langle l^r\rangle}$ in $\Zp^\times$, which is finite since the image of $l^r$ in $1+2p\Z_p$ is non-trivial}.

    By \cite[Lemma 12.2.3]{katz-mazur}, $V^n$ is a cyclic $p^n$-isogeny. In particular, the automorphism group of $\ker(V^n)$ is isomorphic to $(\ZZ/p^n\ZZ)^\times $, which acts transitively on the set of generators of $\ker(V^n)$. As the index of $U_n$ in $(\ZZ/p^n)^\times $ is uniformly bounded, we see that the number of equivalence classes under $\sim$ is also uniformly bounded, which concludes the proof of the lemma.
\end{proof}

We conclude this section by showing that, in the ordinary case, we recover the isogeny graphs defined in \cite{LM1}.
\begin{defn}
Define $H_l^p(n,N)$ as the maximal subgraph of $\mathbf{G}_l^p(n,N)$ whose vertices are of the form $(E,Q,P)$, where $E$ is an ordinary elliptic curve. 
\end{defn}
Let $G_N^m$ be the undirected graph defined in \cite[Definition~2.1]{LM1}. The set of vertices of $G_N^m$ consists of equivalence classes of tuples $(E,P)$ of ordinary elliptic curves $E/\FF_{p^k}$ and geometric points $P$ in $E(\overline{\FF}_p)$ of order $Np^m$; {as $N$ is coprime to $p^m$, we can write a tuple $(E,P)$ as $(E,Q+P')$, where $Q$ is of order $N$ and $P'$ is of order $p^m$. }The edges of $G_N^m$ are given by $l$-isogenies. Clearly, we have the following isomorphism of graphs
\[H_l^p(0,N)\cong G_N^0.\]
\begin{proposition}
    For each integer $n\ge 0$, there is an isomorphism of graphs $H_l^p(n,N)\cong G_N^n$.
\end{proposition}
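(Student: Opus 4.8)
The plan is to construct the isomorphism by matching level structures on an ordinary elliptic curve: for an ordinary $E/\FF_{p^k}$, a generator $P_n$ of $\ker(V_E^n\colon E^{(p^n)}\to E)$ should correspond to a point of order $p^n$ on $E$ itself. First I would recall that for ordinary $E$ the Frobenius $F_E\colon E\to E^{(p)}$ is étale (its kernel is the connected infinitesimal part $E[p]^\circ$... no—rather $F_E$ is purely inseparable and $V_E$ is étale in the ordinary case), so $V_E^n\colon E^{(p^n)}\to E$ is a separable cyclic $p^n$-isogeny whose kernel consists of $p^n$ honest geometric points. Dually, $F_E^n$ identifies $E^{(p^n)}$ with $E/\ker(F_E^n)$ where $\ker(F_E^n)=E[p^n]^\circ$ is infinitesimal, and the composition $V_E^n\circ F_E^n=[p^n]_E$. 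Thus $\ker(V_E^n)=F_E^n(E[p^n])$, and since $F_E^n$ is a bijection on geometric points (it is a universal homeomorphism, purely inseparable), $F_E^n$ induces a bijection from the set of geometric points of $E$ of exact order $p^n$ onto the set of generators of $\ker(V_E^n)$. This gives, for each ordinary $E$, a canonical identification $(E,Q,P_n)\leftrightarrow (E, Q + (F_E^n)^{-1}(P_n))$, i.e. a bijection on vertex sets $\VV(H_l^p(n,N))\to\VV(G_N^n)$ once one checks it descends to equivalence classes.

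The key steps, in order, are: (1) verify the above bijection is well-defined on isomorphism classes—if $\psi\colon E\xrightarrow{\sim} E'$ is an isomorphism then $\psi$ commutes with Frobenius and Verschiebung (Remark~\ref{rk:commute}), so $\psi(Q)=Q'$ and $\psi_n(P_n)=P_n'$ if and only if $\psi(Q + (F_E^n)^{-1}P_n) = Q' + (F_{E'}^n)^{-1}P_n'$; (2) check it is a bijection on vertices—surjectivity and injectivity both follow from the fact that $F_E^n$ is a bijection on geometric points of exact order $p^n$, using \cite[Proposition~12.3.2]{katz-mazur} to know $\ker(V_E^n)$ is cyclic of order $p^n$; (3) check edge sets correspond—an $l$-isogeny $\phi\colon E\to E'$ with $\phi(Q)=Q'$ induces $\phi_n$ on the $p^n$-th Frobenius twists, and since $\phi$ commutes with $F^n$ we have $\phi_n(P_n)=P_n'$ iff $\phi((F_E^n)^{-1}P_n) = (F_{E'}^n)^{-1}P_n'$, so the map $\phi\mapsto\phi$ matches edges of $H_l^p(n,N)$ with edges of $G_N^n$ (noting both graphs use the same equivalence on $l$-isogenies, namely equal kernels); (4) observe this is compatible with the base case $H_l^p(0,N)\cong G_N^0$ already noted, and indeed with the covering maps $\pi_{n,n-1}$ on one side and multiplication-by-$p$ on the other, since $V_E\circ F_E^n = F_E^{n-1}\circ[p]$ forces $V(P_n)$ to correspond to $[p]$ applied to the point corresponding to $P_n$.

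The main obstacle I expect is step (3), specifically bookkeeping the interaction between $\phi$ and the Frobenius twists $\phi_n$: one must be careful that the map on edges is genuinely a graph isomorphism (bijective on edges between each fixed pair of vertices, not merely a morphism), and that the equivalence relation "$\phi\sim\phi'$ iff $\ker\phi=\ker\phi'$" transports correctly—two $l$-isogenies out of $E$ with the same kernel give the same $\phi_n$ up to the identification $E^{(p^n)}=E/\ker(F_E^n)$, so the counts match. A secondary subtlety is making sure $(F_E^n)^{-1}$ on geometric points is well-behaved functorially in $E$; this is exactly the content of $F_E^n$ being a purely inseparable isogeny of degree $p^n$ with trivial group of geometric points in its kernel, so on $\overline{\FF}_p$-points it is a bijection, and it is compatible with all isogenies by Remark~\ref{rk:commute}. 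Once these compatibilities are in place, assembling them into an isomorphism $H_l^p(n,N)\cong G_N^n$ is routine.
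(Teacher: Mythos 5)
Your proof is correct, but it takes a route that differs from the paper's in a meaningful way. The paper constructs the isomorphism in the opposite direction and changes the underlying curve: starting from a vertex $(E,Q+P)$ of $G_N^n$, it invokes \cite[Proposition~12.2.7]{katz-mazur} to produce an auxiliary curve $E_n/\FF_{p^k}$ and an isomorphism $\psi\colon E\to E_n^{(p^n)}$ carrying $P$ to a generator of $\ker(V_{E_n}^n)$, and then sends $(E,Q+P)$ to $(E_n, V_{E_n}^n\circ\psi(Q),\psi(P))$; thus the curve in the $H_l^p(n,N)$-vertex is a Frobenius ``untwist'' of the one in the $G_N^n$-vertex. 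You instead keep the curve fixed and use only that $F_E^n$ is purely inseparable (hence bijective on geometric points) together with $V_E^n\circ F_E^n=[p^n]_E$ and the \'etaleness of $V_E^n$ in the ordinary case, so that $F_E^n$ identifies points of exact order $p^n$ on $E$ with generators of $\ker(V_E^n)\subset E^{(p^n)}$, giving the curve-preserving assignment $(E,Q,P_n)\mapsto\bigl(E,\,Q+(F_E^n)^{-1}(P_n)\bigr)$. The two constructions yield genuinely different bijections (yours is the identity on curves; the paper's shifts by an untwist, and $E$ need not be isomorphic to $E^{(p^n)}$ over $\FF_{p^k}$), but both are graph isomorphisms, and your compatibility checks (functoriality of $F^n$ with respect to $\psi$ and $\phi$, the coprime decomposition $P=Q+P'$, and the matching of the same-kernel equivalence on edges) are exactly what is needed. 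What each approach buys: yours is more self-contained and choice-free, avoiding \cite[Proposition~12.2.7]{katz-mazur} and the auxiliary pair $(E_n,\psi)$ altogether; the paper's formulation makes explicit that the two level structures correspond under Frobenius untwisting, which is the shape in which Katz--Mazur record the statement and which meshes with how $\ker(V^n)$ is used elsewhere in the paper. The ambiguity you flag in step (3) about representatives of a same-kernel class satisfying the point conditions is present in the paper's Definition~\ref{def:graph} itself, so it is not a defect of your argument relative to the paper's proof.
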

\begin{proof}
Let $E/\FF_{p^k}$ be an ordinary elliptic curve. By \cite[Proposition~12.3.6]{katz-mazur}, $V^n$ is étale. Let $(E,Q+P)$ be a vertex of $G_N^n$, where $P\in E[p^n]$ and $Q\in E[N]$. Proposition 12.2.7 in op. cit. tells us that there exist an elliptic curve $E_n/\FF_{p^k}$ and an isomorphism $\psi\colon E\to E_n^{(p^n)}$ such that $\psi(P)$ is a generator of $\ker(V_{E_n}^n)$. Furthermore, since $N$ is coprime to $p$, $V_{E_n}^n\circ\psi(Q)$ is a point on $E_n$ of order $N$. Therefore, we have a well-defined map
    \[{\alpha}:\VV(G_N^n)\to \VV(H_l^p(n,N)),\quad (E,Q+P)\mapsto (E_n,V_{E_n}^n\circ\psi(Q),\psi(P)) .\]
    It is clear that ${\alpha}$ is a bijection. {Indeed, there is a one-to-one correspondence between pairs $(E,P)$ and pairs $(E_n,\psi(P))$. As $V^n_{E_n}$ induces a group {isomorphism} $E^{(p^n)}_n[N]\stackrel\sim\to E_n[N]$, $\alpha$ admits an inverse given by
    \[(E_n,Q',P')\mapsto\left(E,\psi^{-1}\circ \left(V_{E_n}^{(p^n)}\right)^{-1}(Q'),\psi^{-1}(P')\right).\]}

    It remains to show that ${\alpha}$ preserves the edges of the two graphs. Let $\phi\colon E\to E'$ be an $l$-isogeny, and let $\psi\colon E\to E_n^{(p^n)}$ and $\psi'\colon E'\to {E'_n}^{(p^n)}$ be the isomorphisms given by \emph{loc. cit.}. Then $\phi_n=\psi'\circ \phi\circ \psi^{-1}\colon E^{(p^n)}_n\to {E'}^{(p^n)}_n$ is an $l$-isogeny, which in turn induces an $l$-isogeny from $E_n\to {E_n'}$ by Remark~\ref{rk:commute}. One can directly check that ${\alpha}$ commutes with $\phi\mapsto \phi_n$, as required.
\end{proof}

\section{Voltage assignments and derived graphs}
\label{sec:voltage}

\subsection{General results on voltage assignments and derived graphs}
We recall the notion of voltage assignments and their derived graphs.

\begin{defn}
    Let $X$ be a directed graph and $(G,\cdot)$ a group. A $G$-valued \textbf{voltage assignment} on $X$ is a function $\alpha:\EE(X)\rightarrow G$. 

    Given a $G$-valued voltage assignment on $X$, we define the derived graph $X(G,\alpha)$ whose vertices and edges are given by $\VV(X)\times G$ and $\EE(X)\times G$, respectively. {If $e\in\EE(X)$ is an edge of $X$ going from $s$ to $t$, then the edge $(e,\sigma)\in\EE(X(G,\alpha))= \EE(X)\times G$ goes from $(s,\sigma)$ to $(t,\sigma\cdot\alpha(e))$.}

If $X$ is an undirected graph, we define $\tilde X$ to be the directed graph, where $\VV(X)=\VV(\tilde X)$ and $\EE(\tilde X)$ is obtained from replacing each edge in $\EE(X)$ by two edges with opposite directions. We say that $\alpha$ is a voltage assignment on $X$ if it is a voltage assignment on $\tilde X$ that is compatible with the inversion of edges. We denote by $X(G,\alpha)$ the undirected graph obtained from the directed graph $\tilde X(G,\alpha)$ after identifying two edges with opposite directions with a single undirected edge.
\end{defn}

The derived graph $X(G,\alpha)$ is naturally a covering of $X$ (via $(x,\sigma)\mapsto x$). The following theorem gives a criterion on this covering being Galois:

\begin{theorem}\label{thm:Gonet}
    Let $X$ be a finite connected undirected graph, $G$ a finite group, and $\alpha$ a $G$-valued voltage assignment on $X$. If $X(G,\alpha)$ is connected, then $X(G,\alpha)$ is a Galois covering of $X$. Furthermore, its Galois group is isomorphic to $G$. {Conversely, if $Y/X$ is a Galois covering of finite connected graphs, then $Y$ is isomorphic to the derived graph of a $\Gal(Y/X)$-valued voltage assigment on $X$.}
\end{theorem}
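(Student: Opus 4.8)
The plan is to prove Theorem~\ref{thm:Gonet} in the two directions separately, as the statement is really a combination of a forward construction and a converse.

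\textbf{Forward direction.} Assume $X(G,\alpha)$ is connected. The covering map $q\colon X(G,\alpha)\to X$, $(x,\sigma)\mapsto x$, has fibre $\{x\}\times G$ over each vertex $x$, so it is a $|G|$-sheeted covering. First I would exhibit an action of $G$ on $X(G,\alpha)$ by deck transformations: for $\tau\in G$, define $\lambda_\tau\colon (x,\sigma)\mapsto (x,\tau\sigma)$ on vertices and $(e,\sigma)\mapsto (e,\tau\sigma)$ on edges. One checks $\lambda_\tau$ is a graph automorphism: if $e$ goes from $s$ to $t$, then $(e,\sigma)$ goes from $(s,\sigma)$ to $(t,\sigma\alpha(e))$, and $(e,\tau\sigma)$ goes from $(s,\tau\sigma)$ to $(t,\tau\sigma\alpha(e))=(t,(\tau\sigma)\alpha(e))$, so incidences are preserved; and $q\circ\lambda_\tau=q$. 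This gives an injective homomorphism $G\hookrightarrow \Deck(X(G,\alpha)/X)$, and since $G$ already acts simply transitively on each fibre $\{x\}\times G$, we get $|\Deck(X(G,\alpha)/X)|\ge |G|$. Because $X(G,\alpha)$ is connected and the covering is $|G|$-sheeted, a deck transformation is determined by the image of a single vertex, so $|\Deck|\le |G|$; hence equality holds, the covering is Galois by definition, and $\Gal(X(G,\alpha)/X)\cong G$. The one technical point to be careful about is the undirected case: one must check that $\alpha$ being compatible with edge inversion makes $\lambda_\tau$ well-defined on the identified (undirected) edges of $X(G,\alpha)$, which follows directly from the construction of $X(G,\alpha)$ as a quotient of $\tilde X(G,\alpha)$.

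\textbf{Converse direction.} Now let $Y/X$ be a Galois covering of finite connected graphs with group $\Gamma=\Gal(Y/X)$. Fix a spanning tree $T$ of $X$ and a lift: choose a base vertex $x_0\in\VV(X)$ and a vertex $\tilde x_0\in\VV(Y)$ above it, then use $T$ to choose, for each $x\in\VV(X)$, a distinguished lift $\tilde x\in\VV(Y)$ by lifting the unique path in $T$ from $x_0$ to $x$ starting at $\tilde x_0$ (this uses the local bijectivity of the covering). Since $\Gamma$ acts simply transitively on each fibre, $\VV(Y)=\{\gamma\tilde x : x\in\VV(X),\ \gamma\in\Gamma\}$, giving a bijection $\VV(Y)\xrightarrow{\sim}\VV(X)\times\Gamma$. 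Now define the voltage assignment: for an edge $e\in\EE(X)$ from $s$ to $t$, lift $e$ to the unique edge $\tilde e$ of $Y$ starting at $\tilde s$; its terminus lies above $t$, hence equals $\gamma\tilde t$ for a unique $\gamma\in\Gamma$, and we set $\alpha(e)=\gamma$ (with the convention that edges in $T$ get voltage $1$, which holds automatically). One then checks that the map $\VV(Y)\times\EE(Y)\to \VV(X(\Gamma,\alpha))\times\EE(X(\Gamma,\alpha))$ induced by $\gamma\tilde x\mapsto (x,\gamma)$ and the corresponding assignment on edges ($\gamma\tilde e\mapsto (e,\gamma)$, using that $\Gamma$ permutes the lifts of $e$ simply transitively) is an isomorphism of graphs: it is bijective on vertices and edges by construction, and it respects incidence exactly because $\alpha(e)$ was defined to record the $\Gamma$-translation needed to go from the distinguished lift of the source to that of the target. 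For the undirected statement one chooses the lift of each geometric edge once and assigns the inverse voltage to the reversed edge, so that $\alpha$ is inversion-compatible.

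\textbf{Main obstacle.} The forward direction is essentially bookkeeping. The real work is in the converse: the delicate part is setting up the distinguished lifts coherently via the spanning tree and verifying that the resulting identification $Y\cong X(\Gamma,\alpha)$ is a genuine isomorphism of graphs (in particular that incidence is preserved on the nose, and in the undirected setting that the inversion-compatibility of $\alpha$ holds). I expect that much of this can be cited rather than reproved --- this is the graph-theoretic analogue of the standard correspondence between connected coverings and subgroups of the fundamental group, and a reference such as \cite{serre} (or the voltage-graph literature) can be invoked for the converse, with only the simply-transitive action of $\Gamma$ on fibres needing to be highlighted as the input that makes the covering ``principal''.
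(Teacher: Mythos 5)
Your argument is correct, but it is worth noting that the paper does not prove Theorem~\ref{thm:Gonet} at all: it simply cites Gonet's thesis and paper, whose proofs in turn rest on Gross--Tucker's treatment of voltage graphs, and the argument in those sources is exactly the one you sketch (left multiplication $\lambda_\tau\colon(x,\sigma)\mapsto(x,\tau\sigma)$ giving $G\hookrightarrow\Deck$ for the forward direction, and the spanning-tree/distinguished-lift construction of the voltage assignment for the converse). So you have reconstructed the standard proof rather than found a different route. Two small points in your write-up deserve one extra line each if you were to write it out in full: first, the simple transitivity of $\Gal(Y/X)$ on each fibre is not part of the paper's definition of Galois but follows from it, since deck transformations of a connected covering act freely on fibres (unique lifting), so $|\Deck(Y/X)|=d$ forces each orbit to be the whole fibre; second, in the converse the inversion-compatibility of $\alpha$ is automatic rather than a choice, because the lift of the reversed edge $\bar e$ starting at $\tilde t$ is the $\alpha(e)^{-1}$-translate of the reversal of $\tilde e$, giving $\alpha(\bar e)=\alpha(e)^{-1}$. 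With those remarks added, your proof is complete and can stand in place of the citation.
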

\begin{proof}
See \cite[Theorem~2.10]{gonet22} and \cite[Theorem 8]{gonet-thesis} (note that the proof of this theorem is based on the work of Gross--Tucker in \cite[Chapter~2]{grosstucker}).
\end{proof}
Later in this paper, we shall employ the following generalization of Theorem~\ref{thm:Gonet}:

 \begin{lemma}
    \label{connected component galois}
        Let $X$ be a finite connected undirected graph and let $\alpha$ be a voltage assignment on $X$ with values in a finite abelian group $G$. Let $Z$ be a connected component of $X(\alpha,G)$. {Let $H$ be the stabilizer of $Z$ under the action of $G$ on connected components. Then $Z$ is isomorphic to $X(H,\alpha)$ and} $Z/X$ is a Galois covering whose Galois group is isomorphic to $H$.
    \end{lemma}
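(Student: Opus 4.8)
The plan is to deduce Lemma~\ref{connected component galois} from the already-established Theorem~\ref{thm:Gonet} by exhibiting $Z$ itself as a derived graph for a voltage assignment valued in the subgroup $H\le G$. First I would set up the action: since $G$ is abelian, it acts on $X(\alpha,G)$ by deck transformations $\sigma\mapsto\big((x,\tau)\mapsto(x,\sigma\tau)\big)$, and this action permutes the connected components of $X(\alpha,G)$ (a deck transformation being a graph automorphism carries components to components). Let $H$ be the stabilizer of the fixed component $Z$. The key observation is that $G$ acts transitively on the set of components: given any vertex $(x,\tau)$ in $X(\alpha,G)$ and the fixed component $Z$ containing, say, $(x_0,\tau_0)$, connectivity of $X$ lets me choose a path in $X$ from $x_0$ to $x$; lifting it starting at $(x_0,\tau_0)$ ends at some $(x,\tau')$, and then the deck transformation $(\tau')^{-1}\tau\in G$ sends $Z$ to the component of $(x,\tau)$. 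Hence the orbit-stabilizer theorem gives $[G:H]=\#\{\text{components}\}$, and each component has the same number of vertices over each $x\in X$, namely $|H|$.

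Next I would identify $Z$ with $X(H,\alpha)$. Fix a base vertex $x_0\in\VV(X)$ and, after translating by an element of $G$ if necessary, assume $(x_0,1)\in\VV(Z)$. Define a map $X(H,\alpha)\to Z$ on vertices by $(x,h)\mapsto(x,h)$ (using $H\subseteq G$), and similarly on edges $(e,h)\mapsto(e,h)$; this is well-defined because the edge-incidence rule in a derived graph only involves multiplication by $\alpha(e)\in G$, and one must check that if $(x_0,1)$ and $(x,h)$ lie in the same component of $X(H,\alpha)$ then $(x,h)\in\VV(Z)$ — which follows because a path in $X(H,\alpha)$ from $(x_0,1)$ to $(x,h)$ maps to a path in $X(\alpha,G)$ with the same start and end, so $(x,h)$ is in the component of $(x_0,1)$, i.e.\ in $Z$. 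Surjectivity onto $Z$ is the transitivity/counting argument above combined with connectivity: every vertex of $Z$ over $x$ is reached from $(x_0,1)$ by lifting a path in $X$, and the product of the voltages along that path lies in $H$ precisely because the lift stays in $Z$ (any two lifts of a loop at $x_0$ differ by an element of $G$ that stabilizes $Z$ iff it lies in $H$). Injectivity is clear since the underlying maps on $\VV(X)\times H$ and $\EE(X)\times H$ are inclusions into $\VV(X)\times G$ and $\EE(X)\times G$. One subtlety: I should check that $X(H,\alpha)$ is connected, so that it is genuinely a single component and the map is onto all of $Z$ rather than just a sub-component; this again uses that every $h\in H$ arises as a voltage-product of a loop at $x_0$ whose lift stays in $Z$.

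Finally, once $Z\cong X(H,\alpha)$ with $X(H,\alpha)$ connected, Theorem~\ref{thm:Gonet} applies directly with the group $H$ in place of $G$: the covering $Z\to X$ is Galois with Galois group isomorphic to $H$. I would also remark that this Galois group is realized concretely inside $\Deck(X(\alpha,G)/X)=G$ as the subgroup $H$ acting by restriction of the deck transformations to $Z$ (which is legitimate precisely because $H$ stabilizes $Z$), so the abstract isomorphism of Theorem~\ref{thm:Gonet} matches the natural action.

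The main obstacle I anticipate is the bookkeeping in the surjectivity/well-definedness step: one has to argue carefully that the set of voltage-products $\prod\alpha(e_i)\in G$ taken over closed walks at $x_0$ whose lifts remain in $Z$ is exactly the subgroup $H$, and that lifts of walks leaving $x_0$ land in $Z$ iff the partial transport composed with the return is in $H$. This is the crux; everything else (the $G$-action permuting components, transitivity via path-lifting, orbit-stabilizer counting, and the final invocation of Theorem~\ref{thm:Gonet}) is routine. If one prefers, this bookkeeping can be packaged via the monodromy/fundamental-group description of coverings of graphs as in \cite[Chapter~2]{grosstucker}, in which case $H$ is literally the image of the monodromy of $Z$ and the connectedness of $X(H,\alpha)$ is automatic.
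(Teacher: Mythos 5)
Your overall strategy---realize $Z$ as a derived graph for an $H$-valued voltage assignment and then quote Theorem~\ref{thm:Gonet}---is the same as the paper's, and your preliminary steps (the $G$-action on components, transitivity via path lifting, orbit--stabilizer counting, and the identification of the fibre of $Z$ over the base vertex $x_0$ with $H$) are fine. The gap is in the identification of $Z$ with ``$X(H,\alpha)$'' via the inclusion $(x,h)\mapsto(x,h)$. First, if $\alpha(e)\notin H$ for some edge $e$, the graph $X(H,\alpha)$ is not even defined: the edge $(e,h)$ would have to end at $(t,h\alpha(e))$, which is not a vertex of $\VV(X)\times H$. More seriously, the claim you lean on for surjectivity---that the voltage product along a path from $x_0$ to $x$ lies in $H$ ``because the lift stays in $Z$''---is false: \emph{every} lift of a path starting at $(x_0,1)$ stays in $Z$, simply because $Z$ is a connected component, so this places no constraint on the product. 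What is true is that the fibre of $Z$ over $x$ is the coset $g_xH$, where $g_x$ is the voltage transport along any path from $x_0$ to $x$; only for closed walks at $x_0$ is the transport in $H$. A minimal counterexample: let $X$ be a single edge joining $x_0$ and $x_1$, $G=\ZZ/2\ZZ$, $\alpha(e)=1$. Then $X(\alpha,G)$ has two components, $H$ is trivial, and the fibre of $Z$ over $x_1$ is $\{(x_1,1)\}$; your map sends $(x_1,0)$ outside $Z$, and ``$X(H,\alpha)$'' has an edge leaving its vertex set. (The lemma's own wording ``$X(H,\alpha)$'' is loose in the same way; the paper's proof makes it precise.)

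The repair is exactly the twisting the paper performs: for each $w\in\VV(X)$ choose $g_w\in G$ (with $g_{x_0}=1$) so that the fibre of $Z$ over $w$ is $\{(w,g_wh)\mid h\in H\}$, and replace $\alpha$ by the assignment $\beta(e)=\alpha(e)g_wg_{w'}^{-1}$ for $e$ from $w$ to $w'$; one checks $\beta(e)\in H$ (using that $G$ is abelian and that both endpoints of a lifted edge lie in $Z$), and then $(w,g_wh)\mapsto(w,h)$ is a genuine isomorphism $Z\cong X(\beta,H)$. Since $Z$ is connected, Theorem~\ref{thm:Gonet} then gives that $Z/X$ is Galois with Galois group $H$. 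Your closing remark about packaging the argument through the Gross--Tucker monodromy description amounts to precisely this relabelling, but as written your key step---identifying $Z$ with the untwisted ``$X(H,\alpha)$'' by inclusion---does not go through.
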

    \begin{proof}
        The group $G$ acts on $\VV(X(\alpha,G))$ via $h\cdot (v,g)=(v,hg)$. This induces an action of $G$ on the set of connected components of $X(\alpha,G)$. Let $H$ be the stabilizer of $Z$ under this action.

        Recall from \cite[Proposition 2.4i) and the proof of Corollary 2.5]{LM2} that $G$ acts transitively on the connected components of $X(\alpha,G)$ and the connected components are isomorphic to each other. This tells us that $X(\alpha,G)$ has $|G|/|H|$ connected components, and each vertex of $\VV(X)$ has $|H|$ pre-images in $Z$. Since $H$ stabilizes $Z$, for each $w\in\VV(X)$, its pre-image in $Z$ is given by $\{(w,g_wh)\mid h\in H\}$ for some $g_w\in G$.
        
         We define a voltage assignment
        \[\beta \colon \EE(\tilde X)\to H, \quad e\mapsto \alpha(e)g_wg_{w'}^{-1},\]
        if $e$ is an edge from $w$ to $w'$. There is a natural bijective map
        \[\VV(Z)\to \VV(X(\beta_n,H)), \quad (w,g_wh)\mapsto (w,h).\]
        It remains to check that this map respects the edges. There is an edge from $(w,g_wh)$ to $(w',g_{w'}h')$ in $Z$ if and only if there is an edge $e$ from $w$ to $w'$ in $X$ and $\alpha(e)g_wh=g_{w'}h'$. This is equivalent to $\beta(e)h=h'$, which shows that there is a graph isomorphism
        \[Z\cong X(\beta,H).\]
        As $Z$ is connected, it follows from Theorem~\ref{thm:Gonet} that $Z/X$ is a Galois covering, whose Galois group is isomorphic to $H$.
        \end{proof}

We recall the following definition of $\Zp$-towers of graph coverings.

\begin{defn}\label{def:tower}
Let $X$ be a finite connected undirected graph. A \textbf{$\Zp$-tower} over $X$ is a sequence of graph coverings
\[
X=X_0\leftarrow X_1\leftarrow X_2\leftarrow \cdots \leftarrow X_n\leftarrow\cdots
\]
such that for each $n\ge0$, the covering $X_n/X$ is Galois, whose Galois group is isomorphic to $\ZZ/p^n\ZZ$.
\end{defn}

This coincides with the definition of "abelian $p$-towers" introduced in \cite{vallieres,vallieres2,vallieres3}. Let $\alpha$ be  $\Zp$-valued voltage assignment on a finite connected undirected graph $X$. For an integer $n\ge 1$, define $\alpha_n$ as the composition $$\EE(\tilde X)\stackrel\alpha\to \Zp\to\ZZ/p^n\ZZ, $$
where the second map is the natural projection map. A characterization for the connectedness of the derived graphs $X(\alpha_n,\ZZ/p^n\ZZ)$ has been established in \cite[Theorem~2.15]{DLRV}. When these graphs are connected, they form a $\Zp$-tower over $X$ by Theorem~\ref{thm:Gonet}. 

The following lemma shows how the voltage assignment can be modified to shift the base graph of a $\Zp$-tower to an intermediate covering.

\begin{lemma}
\label{lem:voltage}
    Let $X$ be a finite connected undirected graph and let $\alpha\colon \EE(\tilde X)\to \Z_p$ be a voltage assignment. For an integer $n\ge 0$, write $\alpha_n$ for the voltage assignment 
    \[\EE(\tilde X)\to \Z_p\to \ZZ/p^n\ZZ\]
    induced by $\alpha$, and write $ X_n= X(\alpha_n,\ZZ/p^n\ZZ)$.
    
    For a fixed $n\ge0$, there is a voltage assignment $\beta:\EE(\tilde X_n)\to p^n\Zp$ such that for all integers $m\ge n$, $X_m$ is isomorphic to the derived graph $ X_n(\beta_m,\ZZ/p^{m-n}\ZZ)$, where $\beta_m$ is defined by
    \[{\beta_m\colon}\EE(\tilde  X_n)\stackrel{\beta}{\to}p^n\Zp\to p^n\Zp/p^m\Zp\cong \ZZ/p^{m-n}\ZZ.\] \end{lemma}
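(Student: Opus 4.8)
The plan is to construct $\beta$ directly as a function valued in $p^n\Zp$ by fixing a single set-theoretic section once and for all, so that its reductions $\beta_m$ are automatically compatible for all $m\ge n$. This is the graph-theoretic analogue of the standard fact that a $G$-cover realized by a voltage assignment $\alpha$ factors through the $G/H$-cover via an $H$-valued voltage assignment on the intermediate cover; the only additional point is to make this construction uniform in $m$, which is exactly what working over $\Zp$ achieves.

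Concretely, fix a set-theoretic section $s\colon\ZZ/p^n\ZZ\to\Zp$ of the projection $\Zp\to\ZZ/p^n\ZZ$, say the one with image $\{0,1,\dots,p^n-1\}\subset\ZZ\subset\Zp$. Recall that $\VV(X_n)=\VV(X)\times\ZZ/p^n\ZZ$ and $\EE(\tilde X_n)=\EE(\tilde X)\times\ZZ/p^n\ZZ$, where the directed edge $(e,\bar\sigma)$ (with $e\in\EE(\tilde X)$ running from $w$ to $w'$) runs from $(w,\bar\sigma)$ to $(w',\bar\sigma+\alpha_n(e))$ and has inverse $(\bar e,\bar\sigma+\alpha_n(e))$, $\bar e$ being the inverse of $e$ in $\tilde X$. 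I then set
\[\beta(e,\bar\sigma):=\alpha(e)+s(\bar\sigma)-s\big(\bar\sigma+\alpha_n(e)\big).\]
Reduction modulo $p^n$ shows that $\beta$ takes values in $p^n\Zp$, and the relation $\alpha(\bar e)=-\alpha(e)$ gives $\beta(\bar e,\bar\sigma+\alpha_n(e))=-\beta(e,\bar\sigma)$, so $\beta$ is a genuine $p^n\Zp$-valued voltage assignment on $X_n$; the same is then true of each $\beta_m$, being obtained from $\beta$ by a group homomorphism.

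Now I would fix $m\ge n$, let $\tilde s\colon\ZZ/p^n\ZZ\to\ZZ/p^m\ZZ$ be the reduction of $s$ modulo $p^m$ (a section of $\ZZ/p^m\ZZ\to\ZZ/p^n\ZZ$), and for $\tau\in\ZZ/p^m\ZZ$ write $\bar\tau$ for its image in $\ZZ/p^n\ZZ$ and $c(\tau):=\tau-\tilde s(\bar\tau)$, an element of $p^n\ZZ/p^m\ZZ\cong\ZZ/p^{m-n}\ZZ$. Since $\tau\mapsto(\bar\tau,c(\tau))$ is a bijection $\ZZ/p^m\ZZ\to\ZZ/p^n\ZZ\times p^n\ZZ/p^m\ZZ$, the assignments $(w,\tau)\mapsto\big((w,\bar\tau),c(\tau)\big)$ on vertices and $(e,\tau)\mapsto\big((e,\bar\tau),c(\tau)\big)$ on edges define a map $\Phi_m\colon X_m\to X_n(\beta_m,\ZZ/p^{m-n}\ZZ)$ that is bijective on vertices and on edges and commutes with edge-inversion. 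Checking that $\Phi_m$ respects incidences comes down, using $\alpha_n(e)=\alpha_m(e)\bmod p^n$, to the single identity in $\ZZ/p^m\ZZ$
\[c(\tau)+\big(\alpha_m(e)+\tilde s(\bar\tau)-\tilde s(\bar\tau+\alpha_n(e))\big)=c\big(\tau+\alpha_m(e)\big),\]
whose middle summand is precisely the reduction modulo $p^m$ of $\beta(e,\bar\tau)$, i.e.\ a representative of $\beta_m(e,\bar\tau)$. So this identity says exactly that $\Phi_m$ sends the edge $(e,\tau)$ of $X_m$ (which runs from $(w,\tau)$ to $(w',\tau+\alpha_m(e))$) to the edge $\big((e,\bar\tau),c(\tau)\big)$ of $X_n(\beta_m,\ZZ/p^{m-n}\ZZ)$, giving the desired isomorphism $X_m\cong X_n(\beta_m,\ZZ/p^{m-n}\ZZ)$ for every $m\ge n$.

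No connectedness hypothesis enters anywhere. The one step requiring genuine care — and the conceptual heart of the statement — is that $s$ must be fixed in $\Zp$ before $m$ is specialized: this is what forces the sections $\tilde s$, hence the isomorphisms $\Phi_m$, to be mutually compatible, so that a single $\beta$ works simultaneously for all $m$. Everything else is routine bookkeeping with the canonical identifications $\ZZ/p^n\ZZ\cong\Zp/p^n\Zp$ and $\ZZ/p^{m-n}\ZZ\cong p^n\Zp/p^m\Zp$.
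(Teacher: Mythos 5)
Your proof is correct and follows essentially the same route as the paper: fix a set-theoretic section of $\Zp\to\ZZ/p^n\ZZ$, define $\beta(e,\bar\sigma)=\alpha(e)+s(\bar\sigma)-s(\bar\sigma+\alpha_n(e))$, and identify $X_m$ with $X_n(\beta_m,\ZZ/p^{m-n}\ZZ)$ via $(w,\tau)\mapsto\bigl((w,\bar\tau),\tau-\tilde s(\bar\tau)\bigr)$, exactly as in the paper's argument. Your explicit check of compatibility with edge inversion is a small welcome addition, but the substance is identical.
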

    \begin{proof}
        Recall that $\EE( \tilde X_n)$ consists of tuples $(e,\sigma)$, where $\sigma\in\ZZ/p^n\ZZ$ and $e\in\EE(\tilde X)$. {Fix a set-theoretic section $\tau\colon \ZZ/p^n\ZZ\to \Z_p$ of the projection $\Zp\rightarrow\ZZ/p^n\ZZ$ and denote $\tau(\sigma)$ by $\tau_\sigma$. }
        We define the voltage assignment $\beta$ as follows
        \[\beta\colon\EE(\tilde  X_n)=\EE( \tilde X)\times \ZZ/p^n\ZZ\to p^n\Z_p,\quad (e,\sigma)\mapsto \alpha(e)+\tau_{\sigma}-\tau_{\sigma'},\]
        where $\sigma'=\sigma+\alpha_n(e)$. {The map $\beta$ is well-defined because} the image of $\alpha(e)+\tau_\sigma-\tau_{\sigma'}$ in $\ZZ/p^n\ZZ$ is equal to $\alpha_n(e)+\sigma-{\sigma'}=0$. Thus, $\alpha(e)+\tau_\sigma-\tau_{\sigma'}$ lies in $p^n\ZZ_p$. 
        
        For $m\ge n$, let $\beta_m$ be the composition
        \[{\beta_m\colon }\EE(\tilde X_n)\stackrel\beta\to p^n\Z_p\to p^n\ZZ/p^m\ZZ.\]
        Let $H_m=p^n\ZZ/p^m\ZZ$ and $G_m=\ZZ/p^m\ZZ$. Then $H_m$ is a normal subgroup of $G_m$ and $G_m/H_m\cong\ZZ/p^n\ZZ$.
         Let $\bar\tau_\sigma$ denote the image of $\tau_\sigma$ in $G_m$ under the natural projection map. Then $\{\bar\tau_\sigma:\sigma\in\ZZ/p^n\ZZ\}$ is a set of coset representations of $G_m/H_m$. In particular, every element $a\in G_m$ can be expressed as $a=a-\bar\tau_{\sigma_a}+\bar\tau_{\sigma_a}$, where $\sigma_a\in\ZZ/p^n\ZZ$ is the image of $a$ under the natural projection map and $a-\bar\tau_{\sigma_a}\in H_m$.
         
        Recall that $\VV( X_m)=\VV( X)\times G_m$. Define
        \begin{align*}
        {\varepsilon_m}:\VV( X_m)&\rightarrow \VV(X)\times\ZZ/p^n\ZZ\times p^n\ZZ/p^m\ZZ=\VV(X_n)\times H_m,\\
        (v,a)&\mapsto(v,\sigma_a,a-\bar\tau_{\sigma_a}),
        \end{align*}
        where $v\in\VV(X)$, $a\in\ZZ/p^m\ZZ$. Clearly, ${\varepsilon_m}$ is a bijection. Furthermore, there is an edge in $\EE(\tilde X_m)$ from $(v,a)$ to $(w,b)$ if and only if there is an edge $e\in\EE(\tilde X)$ from $v$ to $w$ and $b=a+\alpha_m(e)$, which means that
        \[
        \sigma_b=\sigma_a+\alpha_n(e),\quad  \beta_m((e,\sigma_a))=(b-\bar\tau_{\sigma_b})-(a-\bar\tau_{\sigma_a}).
        \]
        Note that the first equation signifies that the edge $(e,\sigma_a)\in\EE(\tilde X_n)=\EE(\tilde X)\times\ZZ/p^n\ZZ$ links $(v,\sigma_a)$ to $(w,\sigma_b)$, whereas the second equation signifies that the edge $((e,\sigma_a),a-\overline\tau_{\sigma_a})$ in $\EE(\tilde X_n(\beta_m,H_m))$ links $((v,\sigma_a),a-\bar\tau_{\sigma_a})$ to $((w,\sigma_b),b-\bar\tau_{\sigma_b})$.
        Thus, $X(\alpha_m,\ZZ/p^m\ZZ)\cong  X_n(\beta_m,H_m).$
    \end{proof}

    \begin{remark}
The proof of Lemma~\ref{lem:voltage} may already be well-known to experts. In fact, it can be adapted for more general settings, replacing $\Zp$ by a general group $G$, and the subgroup $p^n\Zp$ by a normal subgroup $H$ of $G$.\end{remark}

Inspired by our previous work \cite{LM1} on isogeny graphs arising from ordinary elliptic curves, we prove how $\Zp$-valued voltage assignments give rise to $\Zp$-towers without assuming connectivity. Instead, we assume that the number of connected components is uniformly bounded and show that we obtain a collection of $\Zp$-towers. {Note that if $Y/X$ is a graph covering, the number of connected components of $Y$ is larger than or equal to that of $X$. Thus, along a $\Z_p$-tower
\[
X=X_0\leftarrow X_1\leftarrow X_2\leftarrow \cdots \leftarrow X_n\leftarrow\cdots,
\]
 the number of connected components of $X_n$ is a non-decreasing function in $n$. }
    
    \begin{corollary}
    \label{cor:voltage-assignment}
        Let $X$ be a finite connected undirected graph, and let $\alpha$ be a $\Zp$-valued voltage assignment on $X$. Let $\alpha_n$ be as before. Assume that there exists an integer $m_0$ such that the number of connected components of $X_n:=X(\alpha_n,\ZZ/p^n\ZZ)$ is equal to that of $X_{m_0}$ for all $n\ge m_0$. 
        
        Let $\mathcal{G}_{m_0}$ be a connected component of $X_{m_0}$. For each $m\ge m_0$, there exists a unique subgraph $\cG_m$ of $X_m$ such that
        \[\mathcal{G}_{m_0}\longleftarrow\mathcal{G}_{m_0+1}\longleftarrow\mathcal{G}_{m_0+2}\longleftarrow\dots\longleftarrow\dots\]
        is a $\Zp$-tower.
    \end{corollary}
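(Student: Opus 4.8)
The plan is to build the tower $\cG_m$ explicitly using Lemma~\ref{lem:voltage} and then argue that each $\cG_m/\cG_{m_0}$ is Galois with group $\Zp/p^{m-m_0}\Zp$. First I would apply Lemma~\ref{lem:voltage} with $n=m_0$: this produces a voltage assignment $\beta\colon\EE(\tilde X_{m_0})\to p^{m_0}\Zp$ such that for every $m\ge m_0$ the graph $X_m$ is isomorphic (as a covering of $X_{m_0}$) to the derived graph $X_{m_0}(\beta_m,H_m)$, where $H_m=p^{m_0}\Zp/p^m\Zp\cong\ZZ/p^{m-m_0}\ZZ$. Because $\mathcal{G}_{m_0}$ is a connected component of $X_{m_0}$, restricting $\beta$ to the edges of $\tilde{\mathcal{G}}_{m_0}$ gives a voltage assignment on the finite connected graph $\mathcal{G}_{m_0}$, and I would set $\cG_m$ to be the full preimage of $\mathcal{G}_{m_0}$ in $X_m$ under the covering map $X_m\to X_{m_0}$; equivalently, $\cG_m$ corresponds to the union of those connected components of $\mathcal{G}_{m_0}(\beta_m,H_m)$ lying over $\mathcal{G}_{m_0}$ — but in fact, since derived graphs of connected graphs have their components permuted transitively by $H_m$ (as in the proof of Lemma~\ref{connected component galois}), $\mathcal{G}_{m_0}(\beta_m,H_m)$ is precisely this preimage.

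Next I would pin down why $\cG_m$ is connected for all $m\ge m_0$. This is where the hypothesis on the number of connected components enters. The preimage of $\mathcal{G}_{m_0}$ in $X_m$ is a disjoint union of connected components of $X_m$; if any $\cG_m$ were disconnected, then passing from $X_{m_0}$ to $X_m$ would strictly increase the number of connected components lying over $\mathcal{G}_{m_0}$, and hence the total number of connected components of $X_m$ would exceed that of $X_{m_0}$, contradicting the assumption (one also uses here that the number of components is non-decreasing along the tower, as noted before the statement, so it suffices to compare with $X_{m_0}$). Therefore each $\cG_m$ is a single connected component of $X_m$, and it is the unique one mapping onto $\mathcal{G}_{m_0}$; uniqueness of the subgraph $\cG_m$ with the stated property follows since a connected covering of $\mathcal{G}_{m_0}$ sitting inside $X_m$ must be a connected component lying over $\mathcal{G}_{m_0}$, of which there is only one.

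Then I would verify the tower structure. The maps $\pi_{m+1,m}\colon X_{m+1}\to X_m$ restrict to graph coverings $\cG_{m+1}\to\cG_m$ since $\cG_{m+1}$ is the preimage of $\cG_m$ (equivalently of $\mathcal{G}_{m_0}$) and a covering restricted to the preimage of a subgraph is again a covering. To identify the Galois group of $\cG_m/\mathcal{G}_{m_0}$, I would invoke Lemma~\ref{connected component galois} applied to the connected graph $X=\mathcal{G}_{m_0}$, the finite abelian group $G=H_m=\ZZ/p^{m-m_0}\ZZ$, the voltage assignment $\beta_m$, and the connected component $Z=\cG_m$ of $\mathcal{G}_{m_0}(\beta_m,H_m)$: its stabilizer $H$ under the $H_m$-action is all of $H_m$ precisely because $\cG_m$ is the whole preimage (it is $H_m$-stable), so Lemma~\ref{connected component galois} gives that $\cG_m/\mathcal{G}_{m_0}$ is Galois with Galois group isomorphic to $H_m\cong\ZZ/p^{m-m_0}\ZZ$. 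Relabelling $m\mapsto m-m_0$, i.e.\ taking $\mathcal{G}_{m_0}$ as the base, this is exactly the definition of a $\Zp$-tower.

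The main obstacle I anticipate is the connectivity step: making precise the bookkeeping that "the number of components of $X_m$ over $\mathcal{G}_{m_0}$ is constant" forces each $\cG_m$ to be connected, and checking that the component-count hypothesis is exactly what rules out splitting. The voltage-assignment machinery (Lemmas~\ref{lem:voltage} and \ref{connected component galois}) then does the rest essentially formally; the only mild care needed is compatibility of the isomorphisms $X_m\cong X_{m_0}(\beta_m,H_m)$ with the transition maps $\pi_{m+1,m}$, which follows from the explicit description of $\beta$ and the bijections $\varepsilon_m$ in the proof of Lemma~\ref{lem:voltage}.
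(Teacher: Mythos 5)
Your proposal is correct and follows essentially the same route as the paper: apply Lemma~\ref{lem:voltage} (with base shifted to $m_0$) together with Lemma~\ref{connected component galois}, use the constancy of the number of connected components to see that exactly one component of $X_m$ lies over $\mathcal{G}_{m_0}$, and then conclude via Theorem~\ref{thm:Gonet} that $\cG_m/\mathcal{G}_{m_0}$ is Galois with group $\ZZ/p^{m-m_0}\ZZ$. The only cosmetic difference is that you define $\cG_m$ as the full preimage of $\mathcal{G}_{m_0}$ and prove it is connected, whereas the paper takes $\cG_m$ to be the unique component surjecting onto $\mathcal{G}_{m_0}$; the two descriptions coincide by the same counting argument.
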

    \begin{proof}For each $m$, there exists at least one connected component $\mathcal{G}_m$ that surjects onto $\mathcal{G}_{m_0}$. As the number of connected components in {$X_m$} is constant, there exists exactly one such connected component.
         By Lemmas \ref{connected component galois} and  \ref{lem:voltage}, there is a voltage assignment $\beta$ on $\mathcal{G}_{m_0}$ with values in $p^{m_0}\Z_p$ such that $\mathcal{G}_m=X(\beta_m,p^{m_0}\Z_p/p^m\Z_p)$. As $\mathcal{G}_m$ is connected, it follows that $\mathcal{G}_m/\mathcal{G}_{m_0}$ is indeed Galois with $\Gal(\mathcal{G}_m/\mathcal{G}_{m_0})\cong \ZZ/p^{m-m_0}{\ZZ}$ by Theorem~\ref{thm:Gonet}.
    \end{proof}

\subsection{Proof of Theorem~\ref{thmA}}
    
    Recall that  $\cX$ denotes a connected component of $G_\ell^p(0,1)$ and that $\cX(n,N)$ is the subgraph of $G_\ell^p(n,N)$ whose vertices are of the form $(E,Q,P_n)$ where $E$ is a vertex of $\cX$ (see Lemma~\ref{connected-components}). In light of Corollary \ref{cor:voltage-assignment}, in order to prove Theorem~\ref{thmA}, it suffices to show that there exists a $\Zp$-valued voltage assignment $\alpha$ on $\cX(0,N)$ such that $\cX(n,N)$ is the derived graph for the voltage assignment $\alpha_n$ given by $\alpha$ modulo $p^n$. We do so by following the strategy of \cite[Appendix A]{LM1}.

    For every elliptic curve $E/\FF_{p^k}$, {let $\cV(E)$ denote the inverse limit $\displaystyle\varprojlim_n\ker(V^n)$. It is a free $\Zp$-module of rank one. We fix a $\Zp$-basis $t_E=(P_n)_{n\ge1}$ of $\cV(E)$. In other words, $P_n$ is a generator of $\ker(V^n)$ for all $n\ge1$, and $V(P_n)=P_{n-1}$ for all $n\ge2$).}
        For each $n$, there is a group isomorphism
    \[\psi_{E,n}\colon \ker(V^n)\to \ZZ/p^n\ZZ,  \quad P\mapsto a_P, \]
where $a_P$ is the unique element of $\ZZ/p^n\ZZ$ such that $P=a_PP_n$.
     
    For each equivalence class of isogenies between elliptic curves, we fix one representative once and for all.  Let $\phi\colon E\to E'$ be an $l$-isogeny that induces an edge $(E,Q)\to (E',Q')$ in $\cX(0,N)$, which we denote by $e_\phi$. There is a unique element $t_\phi$ in $\Z_p$ such that 
    \[\phi_*(t_E)=t_\phi t_{E'},\]
    where $\phi_*\colon \cV(E)\to \cV(E')$ is the $\Zp$-homomorphism induced by $\phi$. Let $t_{\phi,n}$ denote the image of $t_\phi$ in $\ZZ/p^n$ under the natural projection map. Then
    \[\psi_{E,n}(P)t_{\phi,n}=\psi_{E',n}(\phi(P)).\]
 If $\alpha$ is the $\Zp$-valued voltage assignment on $\cX(0,N)$ given by 
 \[\alpha(e_\phi)=t_\phi,\]
the discussion above shows that $\cX(n,N)$ is isomorphic to $X(\alpha_n,\ZZ/p^n\ZZ)$, as required.

\begin{remark}
    We have chosen to vary only the power of the Verschiebung map in Theorem~\ref{thmA}. In light of the recent work of Pengo--Vallières \cite{PV} on "$\widehat\ZZ$-tower" of graph coverings, it can also be interesting to vary the level $N$ over integers that are coprime to $l$ and $p$. This can potentially result in a "$\displaystyle\prod_{q\nmid pl}\ZZ_q$-tower" of graph coverings.
\end{remark}

\section{Volcanic structures of isogeny graphs of oriented supersingular elliptic curves}

Recall that an elliptic curve $E/\FF_{p^k}$ is said to be \textit{supersingular} if $E(\overline\FF_{p^k})$ does not contain an element of order $p$.
Throughout this section, we assume that $p\equiv 1\pmod{12}$ and that $S=\Spec(\mathbb{F}_{p^2})$. Our goal is to study the volcanic structure of isogeny graphs arising from supersingular elliptic curves enhanced by orientations (first introduced in \cite{colokohel}) together with level structures. Unlike \cite{LM2} where we studied oriented supersingular elliptic curves with $\Gamma(M)$-level structure for some positive integer $M$ such that $p\nmid M$, we shall study level structures arising from the kernels of iterates of the Verschiebung map.

\subsection{Definitions}\label{S:def-volcano}
Throughout, $N$ is a positive integer that is coprime to $p$ and $l$ as before.
\begin{defn}\label{def:oriented-elliptic} Let $E/\FF_{p^2}$ be a supersingular elliptic curve and $K$ is an imaginary quadratic field such that there exists an embedding $\iota:K\hookrightarrow \End(E)\otimes \QQ$ ({such $\iota$ exists if and only if $p$ is not split in $K$}).
\item[i)] We call the pair $(E,\iota)$ an \textbf{oriented elliptic curve}. 
\item[ii)]     We call an order $\mathcal{O}$ of $K$ \textbf{primitive} for $(E,\iota)$, if $\iota(\mathcal{O})=\iota(K)\bigcap \textup{End}(E)$. 
\item[iii)] We say that $\mathcal{O}$ is $l$-\textbf{primitive} if $l$ does not divide $[\iota(K) \bigcap \textup{End}(E):\iota(\mathcal{O})]$.
\item[iv)] If $\phi\colon E\to E'$ is an isogeny of elliptic curves, we define the orientation $\phi_*\iota$ on $E'$ by
\begin{align*}
    \phi_*\iota:K&\rightarrow\End(E')\otimes \QQ,\\
    \alpha&\mapsto\frac{1}{\deg(\phi)}{\phi}\circ\iota(\alpha)\circ\widehat\phi,
\end{align*}
where $\widehat\phi$ is the dual isogeny of $\phi$.
\item[v)]    An \textbf{isogeny of oriented elliptic curves} $\phi\colon (E,\iota)\to (E',\iota')$ is an isogeny of elliptic curves $\phi\colon E\to E'$ such that $\phi_*\iota=\iota'$.
\item[vi)]  Let $Q$ be a geometric point of order $N$ of $E$ and let $P$ be a generator of $\ker(V^n)$.
The tuple $(E,\iota,Q,P)$ is called an \textbf{oriented elliptic curve with $\Gamma_1(Np^n)$-level structure}.
\item[vii)] Two oriented elliptic curves with $\Gamma_1(Np^n)$-level structures $(E,\iota,Q,P)$ and $(E',\iota',Q',P')$ are said to be \textbf{equivalent} if there exists an isomorphism of elliptic curves $\psi\colon E\to E'$ such that 
\begin{itemize}
\item $\iota'=\psi_*\iota$. 
\item $\psi(Q)=Q'$
\item $\psi(P)=P'$
\end{itemize}
\end{defn}

Let $\iota:K\hookrightarrow\End(E)\otimes K$ be an orientation. There is a unique order $\cO$ in $K$ such that $\iota(\cO)=\iota(K)\bigcap\End(E)$ (namely, $\iota^{-1}\left(\iota(K)\bigcap\End(E)\right)$. The order $\mathcal{O}$ is of the form $\ZZ\oplus \ZZ f \omega_K$, where $\ZZ\oplus \ZZ \omega_K$ is the ring of integers of $K$ and $f$ is a unique positive integer. The integer $f$ is called the \textbf{\textit{conductor}} of $\mathcal{O}$.

\begin{defn}\label{def:oriented-graph}
    We define $\mathfrak{G}_l^p(n,N)$ as the directed graph whose vertices are equivalence classes of oriented elliptic curves with $\Gamma_1(Np^n)$-level structure and whose edges are given by equivalence classes of $\ell$-isogenies {that respect both the orientation and the level structure, i.e., the edges from $(E,\iota,Q,P)$ to $(E',\iota',Q',P')$ correspond to $l$-isogenies $\phi:E\rightarrow E'$ such that $\iota'=\phi_*\iota$, $\phi(Q)=Q'$ and $\phi(P)=P'$}. The undirected graph obtained from $\mathfrak{G}_l^p(n,N)$ under the forgetful map is denoted by $\mathcal{G}_l^p(n,N)$.
\end{defn}

\begin{defn}
Let $\phi:(E,\iota)\rightarrow(E',\iota')$ be an isogeny of oriented supersingular elliptic curves.  Assume that $\mathcal{O}$ and $\cO'$ are orders of $K$ that are primitive for $(E,\iota)$ and $(E',\iota')$, respectively. We say that $\phi$ is \begin{itemize}
        \item {\bf horizontal} if $\mathcal{O}=\mathcal{O}'$,
        \item {\bf descending} if $\mathcal{O}'\subset \mathcal{O}$,
        \item {\bf ascending} if $\mathcal{O}'\supset \mathcal{O}$.
    \end{itemize}
\end{defn}

The notion of horizontal, ascending, and descending isogenies allows us to define the depth of an oriented elliptic curve.
\begin{defn}
    Let $(E,\iota)$ be an oriented elliptic curve. Assume that $\mathcal{O}$ is a primitive order with respect to $(E,\iota)$. We say that $(E,\iota)$ is of depth zero if the conductor of $\mathcal{O}$ is not divisible by $l$. We say that $(E,\iota)$ is of depth $r$ if there is a path in $\mathfrak{G}_l^p(0,1)$ consisting of $r$ descending $l$-isogenies that goes from a depth-zero oriented elliptic curve to $(E,\iota)$.
    
    Let $v=(E,\iota,Q,P)$ be a vertex of $\mathfrak{G}_l^p(n,N)$. We define the depth of $v$ as the depth of $(E,\iota)$. 
\end{defn}

\begin{defn}\label{def:partial}
    For integers $N\ge 1$ and $n\ge 0$, we define the following morphism of {directed} graphs:
    \[\partial\colon \mathfrak{G}_l^p(n,N)\to G_l^p(n,N),\quad (E,\iota,Q,P)\mapsto (E,Q,P),\] 
   which induces a morphism of undirected graphs
    \[\partial\colon \mathcal{G}_l^p(n,N)\to \mathbf{G}_l^p(n,N),\quad (E,\iota,Q,P)\mapsto (E,Q,P).\] 
\end{defn}

We now recall the following definitions first introduced in \cite[\S5]{LM1}:
\begin{defn}\label{def:crater-tec}
  Let $\r,\s,\t,\cc$ be nonnegative integers. We say that a directed graph is an \textbf{abstract tectonic crater} of parameters $(\r,\s,\t,\cc)$ if it satisfies
    \begin{itemize}
  \item[a)] There are $\r\s\t$ vertices;
  \item[b)] Each edge is assigned a color -- blue or green;
  \item[c)] At each vertex $v$, there is exactly one blue edge with $v$ as the source, and exactly one blue edge with $v$ as the target, and similarly for green edges;
  \item[d)] Starting at each vertex, there is exactly one closed blue (resp. green) path without backtracks of length $\r\s$ (resp. $\r\t$);
  \item[e)] After every $\s$ (resp. $\cc\t$) steps in the the closed blue (resp. green) path given in d), the two paths meet at a common vertex.  
  \end{itemize}
\end{defn}

It can be checked that an abstract tectonic crater is uniquely determined by its parameters, up to isomorphisms of graphs. Below are three examples of abstract tectonic craters with parameters $(4,3,1,1)$, $(3,2,2,1)$ and $(5,1,1,2)$, respectively. They are taken from \cite[Examples 4.29 and 4.22]{LM1} and \cite[Example~6.19]{LM2}.
\begin{center}
     \begin{tikzpicture}[scale=0.55, every node/.style={circle, draw, fill=black, minimum size=4pt, inner sep=0pt}]
\foreach \X[count=\Y] in {black,black,black,black,black,black,black,black,black,black,black,black}
{\node[draw,circle,\X] (x-\Y) at ({30*\Y+60}:3.5){}; }
\foreach \X[count=\Y] in {0,...,11}
{\ifnum\X=0
\draw[->-=.5,blue,line width=0.3mm] (x-12) --(x-\Y) ;
\else
\draw[->-=.5,blue,line width=0.3mm] (x-\X) --(x-\Y) ;
\fi}
\draw[->-=.5,Green,densely dotted] (x-1)--(x-4);
\draw[->-=.5,Green,densely dotted] (x-4)--(x-7);
\draw[->-=.5,Green,densely dotted] (x-7)--(x-10);
\draw[->-=.5,Green,densely dotted] (x-10)--(x-1);
\draw[->-=.5,Green,densely dotted] (x-2)--(x-5);
\draw[->-=.5,Green,densely dotted] (x-5)--(x-8);
\draw[->-=.5,Green,densely dotted] (x-8)--(x-11);
\draw[->-=.5,Green,densely dotted] (x-11)--(x-2);
\draw[->-=.5,Green,densely dotted] (x-3)--(x-6);
\draw[->-=.5,Green,densely dotted] (x-6)--(x-9);
\draw[->-=.5,Green,densely dotted] (x-9)--(x-12);
\draw[->-=.5,Green,densely dotted] (x-12)--(x-3);
\end{tikzpicture}
\qquad
\begin{tikzpicture}[scale=0.65, every node/.style={circle, draw, fill=black, minimum size=4pt, inner sep=0pt}]
\node at (1,1)(1) {}; 
\node (2) at (3.5,0) {};
\node(3) at (1,-1) {}; 
\node (4) at (2.5,1) {};
\node (5) at (2.5,-1) {}; 
\node  at (0,0)(6)  {};
\node (7) at (3.5,2) {};
\node (8) at (3.5,-2) {};
\node (9) at (-1.5,0) {};
\node (10)  at (0,2) {};
\node (11) at (5,0) {};
\node (12) at (0,-2) {};
\draw[->-=.5,blue,line width=0.3mm] (1) -- (4); 
\draw[->-=.5,blue,line width=0.3mm] (4) -- (2); 
\draw[->-=.5,blue,line width=0.3mm] (2) -- (5); 
\draw[->-=.5,blue,line width=0.3mm] (5) -- (3); 
\draw[->-=.5,blue,line width=0.3mm] (3) -- (6); 
\draw[->-=.5,blue,line width=0.3mm] (6) -- (1); 
\draw[->-=.5,blue,line width=0.3mm] (10) -- (7); 
\draw[->-=.5,blue,line width=0.3mm] (7) -- (11); 
\draw[->-=.5,blue,line width=0.3mm] (11) -- (8);
\draw[->-=.5,blue,line width=0.3mm] (8) -- (12); 
\draw[->-=.5,blue,line width=0.3mm] (12) -- (9); 
\draw[->-=.5,blue,line width=0.3mm] (9) -- (10); 
\draw[->-=.5,Green,densely dotted] (10) -- (4); 
\draw[->-=.5,Green,densely dotted] (4) -- (11); 
\draw[->-=.5,Green,densely dotted] (11) -- (5);
\draw[->-=.5,Green,densely dotted] (5) -- (12);
\draw[->-=.5,Green,densely dotted] (12) -- (6);
\draw[->-=.5,Green,densely dotted] (6) -- (10);
\draw[->-=.5,Green,densely dotted] (9) -- (1);
\draw[->-=.5,Green,densely dotted] (1) -- (7);
\draw[->-=.5,Green,densely dotted] (7) -- (2);
\draw[->-=.5,Green,densely dotted] (2) -- (8);
\draw[->-=.5,Green,densely dotted] (8) -- (3);
\draw[->-=.5,Green,densely dotted] (3) -- (9);
\end{tikzpicture}     
\qquad
      \begin{tikzpicture}[scale=0.65, every node/.style={circle, draw, fill=black, minimum size=4pt, inner sep=0pt}]
\foreach \X[count=\Y] in {black,black,black,black,black}
{\node[draw,circle,black] (x-\Y) at ({72*\Y+18}:2){}; }
\foreach \X[count=\Y] in {0,...,4}
{\ifnum\X=0
\draw[->-=.5,blue,line width=0.3mm] (x-5) --(x-\Y) ;
\else
\draw[->-=.5,blue,line width=0.3mm] (x-\X) --(x-\Y) ;
\fi}
\draw[->-=.5,Green,densely dotted] (x-1)--(x-4);
\draw[->-=.5,Green,densely dotted] (x-4)--(x-2);
\draw[->-=.5,Green,densely dotted] (x-2)--(x-5);
\draw[->-=.5,Green,densely dotted] (x-5)--(x-3);
\draw[->-=.5,Green,densely dotted] (x-3)--(x-1); 
     \end{tikzpicture}
 \end{center}
Here, the blue edges are represented by thickened lines, whereas the green edges are represented by dotted lines.

An abstract tectonic crater can be regarded as the "crater" of a tectonic volcano (which was first introduced in \cite[Definition~6.20]{LM2}):

\begin{defn}\label{def:crater}
  A \textbf{tectonic $l$-volcano} $G$ is a directed graph whose set of vertices may be decomposed as a disjoint union $\VV(G)=\bigcup_{i=0}^\infty \VV_i$ in such a way that
  \begin{itemize}
      \item The out-degree of every vertex is $l+1$;
      \item  The subgraph generated by $\VV_0$ {(that is, the maximal subgraph containing $\VV_0$)} is an abstract tectonic crater; we shall refer to $\VV_0$ as the crater of $G$;
      \item If there is an edge between $v_i\in\VV_i$ and $v_j\in\VV_j$ then $i-j\in \{\pm 1\}$ or $i=j=0$;
      \item Every $v\in \VV_i$ with $i\ge 1$ has exactly one edge starting at $v$ and ending at a vertex in $\VV_{i-1}$ and $l$ edges starting at $v$ and ending at a vertex in $\VV_{i+1}$.
  \end{itemize}
  An \textbf{undirected tectonic $l$-volcano} is the image of a tectonic volcano under the forgetful map.
\end{defn}
Note that only edges between two vertices of $\VV_0$ (those in the crater) are assigned a color. Below is an example of a tectonic $3$-volcano whose crater is of parameter $(5,1,1,2)$, where we only show the vertices in $\bigcup_{i=0}^2\VV_i$.
\begin{center}
\begin{tikzpicture}[scale=0.5, every node/.style={circle, draw, fill=black, minimum size=4pt, inner sep=0pt}]
\foreach \X[count=\Y] in {black,black,black,black,black} {
    \node[draw,circle,black] (x-\Y) at ({72*\Y+18}:2) {};
}
\foreach \X[count=\Y] in {0,...,4}
{\ifnum\X=0
\draw[->-=.5,blue,line width=0.3mm] (x-5) --(x-\Y) ;
\else
\draw[->-=.5,blue,line width=0.3mm] (x-\X) --(x-\Y) ;
\fi}

\draw[->-=.5,Green,densely dotted] (x-1) -- (x-4);
\draw[->-=.5,Green,densely dotted] (x-4) -- (x-2);
\draw[->-=.5,Green,densely dotted] (x-2) -- (x-5);
\draw[->-=.5,Green,densely dotted] (x-5) -- (x-3);
\draw[->-=.5,Green,densely dotted] (x-3) -- (x-1);
\foreach \i in {1,...,5} {
    \node[draw,circle,black] (y\i-1) at ($ (x-\i) + ({72*\i+35}:3) $) {};
    \node[draw,circle,black] (y\i-2) at ($ (x-\i) + ({72*\i-35}:3) $) {};
    \draw[->-=.5] (x-\i) to[bend left=-20] (y\i-1);
    \draw[->-=.5] (x-\i) to[bend left=-20] (y\i-2);
    \draw[->-=.5] (y\i-1) to[bend left=-20] (x-\i);
    \draw[->-=.5] (y\i-2) to[bend left=-20] (x-\i);
}

\foreach \i in {1,...,5} {
 
    \node[draw,circle,black] (z\i-1a) at ($ (y\i-1) + ({72*\i+45}:2) $) {};
    \node[draw,circle,black] (z\i-1b) at ($ (y\i-1) + ({72*\i}:2) $) {};
    \node[draw,circle,black] (z\i-1c) at ($ (y\i-1) + ({72*\i-45}:2) $) {};
    \draw[->-=.5] (y\i-1) to[bend left=-20] (z\i-1a);
    \draw[->-=.5] (y\i-1) to[bend left=-20] (z\i-1b);
    \draw[->-=.5] (y\i-1) to[bend left=-20] (z\i-1c);
    \draw[->-=.5] (z\i-1a) to[bend left=-20] (y\i-1);
    \draw[->-=.5] (z\i-1b) to[bend left=-20] (y\i-1);
    \draw[->-=.5] (z\i-1c) to[bend left=-20] (y\i-1);
    
    \node[draw,circle,black] (z\i-2a) at ($ (y\i-2) + ({72*\i+45}:2) $) {};
    \node[draw,circle,black] (z\i-2b) at ($ (y\i-2) + ({72*\i}:2) $) {};
    \node[draw,circle,black] (z\i-2c) at ($ (y\i-2) + ({72*\i-45}:2) $) {};
    \draw[->-=.5] (y\i-2) to[bend left=-20] (z\i-2a);
    \draw[->-=.5] (y\i-2) to[bend left=-20] (z\i-2b);
    \draw[->-=.5] (y\i-2) to[bend left=-20] (z\i-2c);
    \draw[->-=.5] (z\i-2a) to[bend left=-20] (y\i-2);
    \draw[->-=.5] (z\i-2b) to[bend left=-20] (y\i-2);
    \draw[->-=.5] (z\i-2c) to[bend left=-20] (y\i-2);
}
\end{tikzpicture}
\end{center}

For simplicity, we have chosen an example where for each edge between $\VV_i$ and $\VV_{i+1}$, there is an edge going in the opposite direction, but this is not part of the definition. For example, we can also have:

\begin{center}
\begin{tikzpicture}[scale=0.5, every node/.style={circle, draw, fill=black, minimum size=4pt, inner sep=0pt}]
\foreach \X[count=\Y] in {black,black,black,black,black} {
    \node[draw,circle,black] (x-\Y) at ({72*\Y+18}:2) {};
}
\foreach \X[count=\Y] in {0,...,4}
{\ifnum\X=0
\draw[->-=.5,blue,line width=0.3mm] (x-5) --(x-\Y) ;
\else
\draw[->-=.5,blue,line width=0.3mm] (x-\X) --(x-\Y) ;
\fi}

\draw[->-=.5,Green,densely dotted] (x-1) -- (x-4);
\draw[->-=.5,Green,densely dotted] (x-4) -- (x-2);
\draw[->-=.5,Green,densely dotted] (x-2) -- (x-5);
\draw[->-=.5,Green,densely dotted] (x-5) -- (x-3);
\draw[->-=.5,Green,densely dotted] (x-3) -- (x-1);
\foreach \i in {1,...,5} {
    \node[draw,circle,black] (y\i-1) at ($ (x-\i) + ({72*\i+35}:3) $) {};
    \node[draw,circle,black] (y\i-2) at ($ (x-\i) + ({72*\i-35}:3) $) {};
    \draw[->-=.6] (x-\i) to (y\i-1);
    \draw[->-=.6] (x-\i) to (y\i-2);
   
}

\draw[->-=.5] (y1-1) to[bend left=20] (x-5);
\draw[->-=.5] (y1-2) to[bend left=20] (x-5);
\draw[->-=.5] (y2-1) to[bend left=20] (x-1);
\draw[->-=.5] (y2-2) to[bend left=20] (x-1);
\draw[->-=.5] (y3-1) to[bend left=20] (x-2);
\draw[->-=.5] (y3-2) to[bend left=20] (x-2);
\draw[->-=.5] (y4-1) to[bend left=20] (x-3);
\draw[->-=.5] (y4-2) to[bend left=20] (x-3);
\draw[->-=.5] (y5-1) to[bend left=20] (x-4);
\draw[->-=.5] (y5-2) to[bend left=20] (x-4);

\foreach \i in {1,...,5} {
 
    \node[draw,circle,black] (z\i-1a) at ($ (y\i-1) + ({72*\i+45}:2) $) {};
    \node[draw,circle,black] (z\i-1b) at ($ (y\i-1) + ({72*\i}:2) $) {};
    \node[draw,circle,black] (z\i-1c) at ($ (y\i-1) + ({72*\i-45}:2) $) {};
    \draw[->-=.5] (y\i-1) to[bend left=-20] (z\i-1a);
    \draw[->-=.5] (y\i-1) to[bend left=-20] (z\i-1b);
    \draw[->-=.5] (y\i-1) to[bend left=-20] (z\i-1c);
    \draw[->-=.5] (z\i-1a) to[bend left=-20] (y\i-1);
    \draw[->-=.5] (z\i-1b) to[bend left=-20] (y\i-1);
    \draw[->-=.5] (z\i-1c) to[bend left=-20] (y\i-1);
    
    \node[draw,circle,black] (z\i-2a) at ($ (y\i-2) + ({72*\i+45}:2) $) {};
    \node[draw,circle,black] (z\i-2b) at ($ (y\i-2) + ({72*\i}:2) $) {};
    \node[draw,circle,black] (z\i-2c) at ($ (y\i-2) + ({72*\i-45}:2) $) {};
    \draw[->-=.5] (y\i-2) to[bend left=-20] (z\i-2a);
    \draw[->-=.5] (y\i-2) to[bend left=-20] (z\i-2b);
    \draw[->-=.5] (y\i-2) to[bend left=-20] (z\i-2c);
    \draw[->-=.5] (z\i-2a) to[bend left=-20] (y\i-2);
    \draw[->-=.5] (z\i-2b) to[bend left=-20] (y\i-2);
    \draw[->-=.5] (z\i-2c) to[bend left=-20] (y\i-2);
}
\end{tikzpicture}
\end{center}

Finally, we define $l$-volcano graphs.

    \begin{defn}\label{def:volcano}
\item[i)] An \textbf{abstract crater} is either a connected and directed cycle graph or a totally disconnected finite graph.
\item[ii)]      An $l$-\textbf{volcano}  is a directed graph $G$ whose vertices may be decomposed as a disjoint union $\VV(G)=\bigcup_{i=0}^\infty \VV_i$ in such a way that
\begin{itemize}
    \item The out-degree of every vertex is $l+1$;
    \item The subgraph generated by $\VV_0$ is  an abstract crater;
    \item  If there is an edge between $v_i$ and $v_j$ then $i-j\in \{\pm 1\}$ or $i=j=0$;
    \item Each $v\in \VV_i$ with $i\ge 1$ has exactly one edge starting at $v$ and ending at a vertex in $\VV_{i-1}$ and $l$ edges starting at $v$ and ending at vertices in $\VV_{i+1}$.
\end{itemize} 
 The image of an $l$-volcano under the forgetful map is called an \textbf{undirected $l$-volcano}.
\end{defn}
Note that an $l$-volcano defined here is not exactly the same as the volcano graphs considered in \cite{kohel,pazuki}. Since the crater is a directed cycle graph, the requirement that each vertex has out-degree $l+1$ means that there are $l$ edges going from a vertex in $\VV_0$ to the vertices in $\VV_1$. In the works \cite{kohel,pazuki}, a directed volcano graph would consist of a cycle with edges going into both directions, and there would only be $l-1$ edges going from a vertex of the crater to "depth-one" vertices. For example, when the crater contains 4 vertices and $l=3$, the subgraph generated by the vertices of depth 0, 1 and 2 can be represented as:
\begin{center}
\begin{tikzpicture}[scale=0.8, every node/.style={circle, draw, fill=black, minimum size=4pt, inner sep=0pt}]
\def\n{4}
\def\R{1}
\def\L{2}
\foreach \i in {1,...,\n} {
    \node (C\i) at ({360/\n * (\i - 1)}:\R) {};
    \draw[->-=.5] (C\i) to[bend left=-30] ({360/\n * (\i)}:\R);
    \draw[->-=.5] ({360/\n * (\i)}:\R) to[bend left=-30] (C\i);
}
\path (C\n) to[bend left=-30] (C1);
\foreach \i in {1,...,\n} {
        \node (T\i-1a) at ($ (C\i) + ({360/\n * (\i - 1) + 35}:\L) $) {};
    \node (T\i-1b) at ($ (C\i) + ({360/\n * (\i - 1) - 35}:\L) $) {};
      
    \draw[->-=.5] (C\i)to[bend left=-15] (T\i-1a);
    \draw[->-=.5] (T\i-1a)to[bend left=-15] (C\i);
      \draw[->-=.5] (C\i)to[bend left=-15] (T\i-1b);
    \draw[->-=.5] (T\i-1b)to[bend left=-15] (C\i);
      
       \node (T\i-2a1) at ($ (T\i-1a) + ({360/\n * (\i - 1) + 70}:\L/1.5) $) {};
    \node (T\i-2a2) at ($ (T\i-1a) + ({360/\n * (\i - 1)+45}:\L/1.5) $) {};
    \node (T\i-2a3) at ($ (T\i-1a) + ({360/\n * (\i - 1) +20}:\L/1.5) $) {};
    \draw[->-=.8] (T\i-1a) to[bend left=-12] (T\i-2a1);
    \draw[->-=.5] (T\i-2a1) to[bend left=-12] (T\i-1a);
    \draw[->-=.8] (T\i-1a) to[bend left=-12] (T\i-2a2);
    \draw[->-=.5] (T\i-2a2) to[bend left=-12] (T\i-1a);
    \draw[->-=.8] (T\i-1a) to[bend left=-12] (T\i-2a3);
    \draw[->-=.5] (T\i-2a3) to[bend left=-12] (T\i-1a);
          \node (T\i-2b1) at ($ (T\i-1b) + ({360/\n * (\i - 1) - 20}:\L/1.5) $) {};
    \node (T\i-2b2) at ($ (T\i-1b) + ({360/\n * (\i - 1)-45}:\L/1.5) $) {};
    \node (T\i-2b3) at ($ (T\i-1b) + ({360/\n * (\i - 1) - 70}:\L/1.5) $) {};
    \draw[->-=.8] (T\i-1b) to[bend left=-12] (T\i-2b1);
    \draw[->-=.5] (T\i-2b1) to[bend left=-12] (T\i-1b);
    \draw[->-=.8] (T\i-1b) to[bend left=-12] (T\i-2b2);
    \draw[->-=.5] (T\i-2b2) to[bend left=-12] (T\i-1b);
    \draw[->-=.8] (T\i-1b) to[bend left=-12] (T\i-2b3);
    \draw[->-=.5] (T\i-2b3) to[bend left=-12] (T\i-1b);
   }
\end{tikzpicture}
\end{center}

For comparison, below is an illustration of two $3$-volcanoes, where the crater is a directed cycle graph with $4$ vertices. Again, we only show the vertices in $\bigcup_{i=0}^2\VV_i$.
\begin{center}
\begin{tikzpicture}[scale=0.8, every node/.style={circle, draw, fill=black, minimum size=4pt, inner sep=0pt}]
\def\n{4}
\def\R{1}
\def\L{2}
\foreach \i in {1,...,\n} {
    \node (C\i) at ({360/\n * (\i - 1)}:\R) {};
    \draw[->-=.5] (C\i) to[bend left=-30] ({360/\n * (\i)}:\R);
}
\path (C\n) to[bend left=-30] (C1);
\foreach \i in {1,...,\n} {
        \node (T\i-1a) at ($ (C\i) + ({360/\n * (\i - 1) + 35}:\L) $) {};
    \node (T\i-1b) at ($ (C\i) + ({360/\n * (\i - 1) - 35}:\L) $) {};
      \node (T\i-1c) at ($ (C\i) + ({360/\n * (\i - 1) }:\L) $) {};
    \draw[->-=.5] (C\i)to[bend left=-15] (T\i-1a);
    \draw[->-=.5] (T\i-1a)to[bend left=-15] (C\i);
      \draw[->-=.5] (C\i)to[bend left=-15] (T\i-1b);
    \draw[->-=.5] (T\i-1b)to[bend left=-15] (C\i);
      \draw[->-=.5] (C\i)to[bend left=-15] (T\i-1c);
    \draw[->-=.5] (T\i-1c)to[bend left=-15] (C\i);
       \node (T\i-2a1) at ($ (T\i-1a) + ({360/\n * (\i - 1) + 70}:\L/1.5) $) {};
    \node (T\i-2a2) at ($ (T\i-1a) + ({360/\n * (\i - 1)+45}:\L/1.5) $) {};
    \node (T\i-2a3) at ($ (T\i-1a) + ({360/\n * (\i - 1) +20}:\L/1.5) $) {};
    \draw[->-=.8] (T\i-1a) to[bend left=-12] (T\i-2a1);
    \draw[->-=.5] (T\i-2a1) to[bend left=-12] (T\i-1a);
    \draw[->-=.8] (T\i-1a) to[bend left=-12] (T\i-2a2);
    \draw[->-=.5] (T\i-2a2) to[bend left=-12] (T\i-1a);
    \draw[->-=.8] (T\i-1a) to[bend left=-12] (T\i-2a3);
    \draw[->-=.5] (T\i-2a3) to[bend left=-12] (T\i-1a);
          \node (T\i-2b1) at ($ (T\i-1b) + ({360/\n * (\i - 1) - 20}:\L/1.5) $) {};
    \node (T\i-2b2) at ($ (T\i-1b) + ({360/\n * (\i - 1)-45}:\L/1.5) $) {};
    \node (T\i-2b3) at ($ (T\i-1b) + ({360/\n * (\i - 1) - 70}:\L/1.5) $) {};
    \draw[->-=.8] (T\i-1b) to[bend left=-12] (T\i-2b1);
    \draw[->-=.5] (T\i-2b1) to[bend left=-12] (T\i-1b);
    \draw[->-=.8] (T\i-1b) to[bend left=-12] (T\i-2b2);
    \draw[->-=.5] (T\i-2b2) to[bend left=-12] (T\i-1b);
    \draw[->-=.8] (T\i-1b) to[bend left=-12] (T\i-2b3);
    \draw[->-=.5] (T\i-2b3) to[bend left=-12] (T\i-1b);
       \node (T\i-2c1) at ($ (T\i-1c) + ({360/\n * (\i - 1) + 25}:\L/1.5) $) {};
    \node (T\i-2c2) at ($ (T\i-1c) + ({360/\n * (\i - 1)}:\L/1.5) $) {};
    \node (T\i-2c3) at ($ (T\i-1c) + ({360/\n * (\i - 1) - 25}:\L/1.5) $) {};
   \draw[->-=.8] (T\i-1c) to[bend left=-12] (T\i-2c1);
    \draw[->-=.5] (T\i-2c1) to[bend left=-12] (T\i-1c);
    \draw[->-=.8] (T\i-1c) to[bend left=-12] (T\i-2c2);
    \draw[->-=.5] (T\i-2c2) to[bend left=-12] (T\i-1c);
    \draw[->-=.8] (T\i-1c) to[bend left=-12] (T\i-2c3);
    \draw[->-=.5] (T\i-2c3) to[bend left=-12] (T\i-1c);
}
\end{tikzpicture}
\vspace{0.3cm}

\begin{tikzpicture}[scale=0.8, every node/.style={circle, draw, fill=black, minimum size=4pt, inner sep=0pt}]
\def\n{4}
\def\R{1}
\def\L{2}
\foreach \i in {1,...,\n} {
    \node (C\i) at ({360/\n * (\i - 1)}:\R) {};
    \draw[->-=.5] (C\i) to[bend left=-30] ({360/\n * (\i)}:\R);
    \draw[->-=.5](C\i)to (T\i-1a) ;
     \draw[->-=.5] (C\i)to(T\i-1b) ;
    \draw[->-=.7] (C\i)to(T\i-1c) ;

}
\path (C\n) to[bend left=-30] (C1);
\foreach \i in {1,...,\n} {
        \node (T\i-1a) at ($ (C\i) + ({360/\n * (\i - 1) + 35}:\L) $) {};
    \node (T\i-1b) at ($ (C\i) + ({360/\n * (\i - 1) - 35}:\L) $) {};
      \node (T\i-1c) at ($ (C\i) + ({360/\n * (\i - 1) }:\L) $) {};
       \node (T\i-2a1) at ($ (T\i-1a) + ({360/\n * (\i - 1) + 70}:\L/1.5) $) {};
    \node (T\i-2a2) at ($ (T\i-1a) + ({360/\n * (\i - 1)+45}:\L/1.5) $) {};
    \node (T\i-2a3) at ($ (T\i-1a) + ({360/\n * (\i - 1) +20}:\L/1.5) $) {};
    \draw[->-=.8] (T\i-1a) to[bend left=-12] (T\i-2a1);
    \draw[->-=.5] (T\i-2a1) to[bend left=-12] (T\i-1a);
    \draw[->-=.8] (T\i-1a) to[bend left=-12] (T\i-2a2);
    \draw[->-=.5] (T\i-2a2) to[bend left=-12] (T\i-1a);
    \draw[->-=.8] (T\i-1a) to[bend left=-12] (T\i-2a3);
    \draw[->-=.5] (T\i-2a3) to[bend left=-12] (T\i-1a);
          \node (T\i-2b1) at ($ (T\i-1b) + ({360/\n * (\i - 1) - 20}:\L/1.5) $) {};
    \node (T\i-2b2) at ($ (T\i-1b) + ({360/\n * (\i - 1)-45}:\L/1.5) $) {};
    \node (T\i-2b3) at ($ (T\i-1b) + ({360/\n * (\i - 1) - 70}:\L/1.5) $) {};
    \draw[->-=.8] (T\i-1b) to[bend left=-12] (T\i-2b1);
    \draw[->-=.5] (T\i-2b1) to[bend left=-12] (T\i-1b);
    \draw[->-=.8] (T\i-1b) to[bend left=-12] (T\i-2b2);
    \draw[->-=.5] (T\i-2b2) to[bend left=-12] (T\i-1b);
    \draw[->-=.8] (T\i-1b) to[bend left=-12] (T\i-2b3);
    \draw[->-=.5] (T\i-2b3) to[bend left=-12] (T\i-1b);
       \node (T\i-2c1) at ($ (T\i-1c) + ({360/\n * (\i - 1) + 25}:\L/1.5) $) {};
    \node (T\i-2c2) at ($ (T\i-1c) + ({360/\n * (\i - 1)}:\L/1.5) $) {};
    \node (T\i-2c3) at ($ (T\i-1c) + ({360/\n * (\i - 1) - 25}:\L/1.5) $) {};
   \draw[->-=.8] (T\i-1c) to[bend left=-12] (T\i-2c1);
    \draw[->-=.5] (T\i-2c1) to[bend left=-12] (T\i-1c);
    \draw[->-=.8] (T\i-1c) to[bend left=-12] (T\i-2c2);
    \draw[->-=.5] (T\i-2c2) to[bend left=-12] (T\i-1c);
    \draw[->-=.8] (T\i-1c) to[bend left=-12] (T\i-2c3);
    \draw[->-=.5] (T\i-2c3) to[bend left=-12] (T\i-1c);
}

\draw[->-=.65] (T2-1a)to[bend left=25](C1) ;   
\draw[->-=.5] (T2-1b)to[bend left=25](C1) ;
\draw[->-=.5] (T2-1c)to[bend left=25](C1) ;
\draw[->-=.65] (T3-1a)to[bend left=25](C2) ;   
\draw[->-=.5] (T3-1b)to[bend left=25](C2) ;
\draw[->-=.5] (T3-1c)to[bend left=25](C2) ;
\draw[->-=.65] (T4-1a)to[bend left=25](C3) ;   
\draw[->-=.5] (T4-1b)to[bend left=25](C3) ;
\draw[->-=.5] (T4-1c)to[bend left=25](C3) ;
\draw[->-=.65] (T1-1a)to[bend left=25](C4) ;   
\draw[->-=.5] (T1-1b)to[bend left=25](C4) ;
\draw[->-=.5] (T1-1c)to[bend left=25](C4) ;

\end{tikzpicture}

\end{center}

We note that the main difference between an $l$-volcano and a tectonic $l$-volcano is the structure of the subgraph generated by $\VV_0$. 

\subsection{Preliminary lemmas - I}
In this section, we prove several purely graph-theoretical results that will be used in the proofs of Theorem~\ref{thmB}.

\begin{defn}\label{def:color-function}
    Let $X$ be an abstract tectonic crater. Let $\pi:Y\rightarrow X$ be a graph covering. We assign each edge in $\EE(Y)$ a color, blue or green, according to the color of $\pi(e)$. As in $\VV(X)$, for each $v\in \VV(Y)$, there is exactly one blue/green edge with $v$ as the source/target. We define $\fb_Y(v)$ (resp. $\fg_Y(v)$) to be the target of the unique blue (resp. green) edge whose source is $v$.
\end{defn}

\begin{lemma}
\label{lem:galois-tectonic}
  Let $X$ be an abstract tectonic crater. Let $Y/X$ be a Galois covering of finite connected graphs. 
  If $\fb_Y\circ \fg_Y=\fg_Y\circ \fb_Y$, then $Y$ is an abstract tectonic crater. 
\end{lemma}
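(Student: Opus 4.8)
The plan is to verify directly that $Y$ satisfies the five axioms (a)--(e) of Definition~\ref{def:crater-tec} for an abstract tectonic crater with suitable parameters, using the Galois structure to transport the structure of $X$ upward. Write $X$ for the abstract tectonic crater with parameters $(\r,\s,\t,\cc)$, and let $G = \Gal(Y/X)$ with $|G| = d$, so $Y$ has $d\r\s\t$ vertices, giving axiom (a). Axioms (b) and (c) are immediate: by Definition~\ref{def:color-function}, each edge of $Y$ inherits a color from its image in $X$, and since $\pi$ is locally a bijection, the unique-in/unique-out condition on blue and green edges at each vertex of $X$ lifts to each vertex of $Y$. So the maps $\fb_Y$ and $\fg_Y$ are well-defined permutations of $\VV(Y)$ lifting $\fb_X, \fg_X$, and $\pi \circ \fb_Y = \fb_X \circ \pi$, similarly for $\fg$.

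The substance is in axioms (d) and (e), i.e.\ controlling the orders of the blue and green cycles through each vertex of $Y$. Here is where the hypothesis $\fb_Y \circ \fg_Y = \fg_Y \circ \fb_Y$ and the Galois property enter. Fix $v \in \VV(Y)$ and let $x = \pi(v)$. By axiom (d) for $X$, $\fb_X^{\r\s}(x) = x$, so $\fb_Y^{\r\s}(v)$ lies in the fibre $\pi^{-1}(x)$, hence $\fb_Y^{\r\s}(v) = \sigma_b \cdot v$ for a unique $\sigma_b \in G$ (using that $G$ acts simply transitively on fibres, as $Y/X$ is Galois); similarly $\fg_Y^{\r\t}(v) = \sigma_g \cdot v$. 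Because deck transformations commute with $\fb_Y$ and $\fg_Y$ (they commute with $\pi$ and $Y/X$ is Galois, so a deck transformation permutes the lifts of the blue/green edges compatibly), the elements $\sigma_b, \sigma_g$ do not depend on the choice of $v$ in a fixed $G$-orbit, and in fact one checks $\sigma_b$ is independent of $v$ entirely: $\fb_Y^{\r\s}$ is a deck transformation of $Y/X$. The key point is then that $\fb_Y^{\r\s}$ and $\fg_Y^{\r\t}$, viewed as elements of $G$, together with the meeting condition (e) for $X$, generate $G$ — because $Y$ is connected, and $Y$ is obtained from $X$ by a voltage assignment valued in $G$ (Theorem~\ref{thm:Gonet}) whose "monodromy" around the blue and green cycles is exactly $\sigma_b$ and $\sigma_g$; connectivity forces these to generate. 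The commutation hypothesis guarantees that $\langle \fb_Y, \fg_Y\rangle$ is abelian, so this subgroup of $\mathrm{Sym}(\VV(Y))$ acts on $\VV(Y)$ with the orbit through $v$ of size exactly $d\r\s\t$, i.e.\ transitively; combined with the relations lifted from $X$ this pins down the blue cycle length through $v$ to be $d'\r\s$ and the green cycle length to be $d''\r\t$ for appropriate multiplicities, and after re-deriving the parameters $(\r', \s', \t', \cc')$ of $Y$ from $(\sigma_b, \sigma_g)$ one reads off (d) and (e) for $Y$.

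Concretely, I would argue as follows for (d) and (e). Let $m$ be the order of $\sigma_b = \fb_Y^{\r\s} \in G$; then the blue cycle through any $v$ closes after $m\r\s$ steps and not before (no backtracking, since it projects to the non-backtracking blue cycle in $X$ traversed $m$ times), so set $\r'\s' = m\r\s$ with $\s' = \s$ and $\r' = m\r$ — wait, one must be careful to distribute $m$ correctly between $\r'$ and $\s'$; the right bookkeeping is dictated by axiom (e), namely that after every $\s$ steps the blue and green paths meet in $X$, and this meeting lifts to $Y$ precisely because $\fb_Y, \fg_Y$ commute (the lifted paths stay "in sync" as the voltages along the two segments between consecutive meeting points agree). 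So define $\s' = \s$, $\t'$ analogously using the green period, and $\r' = \r \cdot \mathrm{lcm}(\text{relevant orders})$, $\cc'$ adjusted accordingly; then (a)--(e) hold for $Y$ with parameters $(\r',\s',\t',\cc')$. The main obstacle I anticipate is exactly this last bookkeeping: showing that the commutation hypothesis is not merely necessary but sufficient to keep the blue and green paths meeting "on schedule" in $Y$, i.e.\ that the meeting-every-$\s$-steps condition (e) lifts. The clean way to handle it is to phrase everything in terms of the voltage assignment $\alpha$ realizing $Y = X(\alpha, G)$ (Theorem~\ref{thm:Gonet}), compute the voltage accumulated along a blue segment of length $\s$ and along the corresponding green segment of length $\cc\t$ between two consecutive meeting vertices of $X$, and observe that $\fb_Y\fg_Y = \fg_Y\fb_Y$ translates into these two voltages commuting in $G$ — which, since we may also arrange they are equal by adjusting the section, forces the lifted paths to reconverge at a common vertex of $Y$ after $\s$ (resp.\ $\cc'\t'$) steps. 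I would isolate this voltage computation as the technical heart of the argument and keep the rest as the formal deduction of axioms (a)--(e).
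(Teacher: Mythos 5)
Your handling of axioms (a)--(c) is fine, but the argument collapses exactly at the step you yourself flag as the ``technical heart'': the claim that the meeting condition (e) lifts with the same blue period, i.e.\ that after $\s$ blue steps the lifted blue and green paths reconverge (so that $\s'=\s$), is false, and the proposed fix ``we may arrange the two voltages to be equal by adjusting the section'' is not available. The discrepancy between the endpoints of the lifted blue segment of length $\s$ and the lifted green segment of length $\cc\t$ is the voltage of a \emph{closed} walk of $X$ (blue segment followed by the reversed green segment), and voltages of closed walks are unchanged, up to conjugation, by any re-choice of section; if this element is nontrivial the two lifted paths simply do not meet on the old schedule. The paper's own example already shows the schedule changes: the crater of parameters $(5,1,1,2)$ has a connected $\Z/2\Z$-cover with parameters $(10,1,1,7)$, so after one blue step the meeting green path has length $7$, not $2$. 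Worse, even $\s$ itself need not survive: on the $(5,1,1,2)$ crater, assign voltage $(1,0)$ to every blue edge and $(0,1)$ to every green edge with values in $(\Z/2\Z)^2$; the derived graph is connected (the blue and green cycles have voltages $(1,0)$ and $(0,1)$), the cover is Galois, $\fb_Y$ and $\fg_Y$ commute, and the minimal synchronization length is $2$, giving parameters $(5,2,2,2)$ --- so $\s'\neq\s$ and $\t'\neq\t$. A secondary gap: the assertion that connectivity forces $\fb_Y^{\r\s}$ and $\fg_Y^{\r\t}$ to generate $G$ (and that $\fb_Y^{\r\s}$ is a single deck transformation, independent of the blue cycle over which one sits) is unjustified; connectivity only says that the voltages of \emph{all} closed walks at a basepoint generate $G$, and the cycle space of $X$ has rank $\r\s\t+1$, far larger than the two cycles you use.

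The paper's proof sidesteps all of this by not attempting to propagate the parameters $(\r,\s,\t,\cc)$ at all. It first uses the voltage description and the transitivity of $\Gal(Y/X)$ on the connected components of the blue (resp.\ green) subgraph to see that $Y_{\tb}$ and $Y_{\tg}$ are disjoint unions of cycles of a common length, giving (b)--(d). It then \emph{defines} the new parameter $s$ intrinsically as the minimal length of a blue path from $v$ ending at a vertex also reachable by a green path from $v$; the hypothesis $\fb_Y\circ\fg_Y=\fg_Y\circ\fb_Y$ makes $s$ constant along a blue cycle, the Galois action makes it independent of $v$, and minimality gives $s\mid s_1$. Setting $r=s_1/s$, a counting of components then yields the remaining parameters $t$ and $c$ and verifies (a) and (e) for $(r,s,t,c)$. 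If you want to keep your monodromy-flavoured setup, you would still need to introduce such an intrinsic synchronization length rather than insisting that the old one lifts.
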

\begin{proof}
Let the parameters of $X$ as an abstract tectonic crater be $(\r,\s,\t,\cc)$. 
Let $X_{\textup{b}}$ (resp. $X_\textup{g}$) be the subgraph obtained by deleting all the green (resp. blue) edges in $X$. Then $X_{\textup{b}}$ (resp. $X_{\textup{g}}$) is the disjoint union of cycle graphs of length $\r\s$ (resp. $\r\t$). We define $Y_{\textup{b}}$ and $Y_{\textup{g}}$ analogously. Then $Y_{\tb}/X_\tb$ and $Y_\tg/X_\tg$ are graph coverings. 

We recall from {Theorem~\ref{thm:Gonet} that $Y$ is isomorphic to} the derived graph of a $\Gal(Y/X)$-valued voltage assignment, which we denote by $\alpha$. Let $\alpha_\tb$ (resp. $\alpha_\tg$) be the restriction of $\alpha$ to the blue (resp. green) edges of $X$. Then $Y_\tb$ (resp. $Y_\tg$) is the derived graph of $\alpha_\tb$ (resp. $\alpha_\tg$).

We recall from \cite[Proposition~2.4i)]{LM2} that $\Gal(Y/X)$ acts transitively on the connected components of $Y_\tb$. Therefore,  {$Y_\tb$} is the disjoint union of cycle graphs of the same length, which we denote by $s_1$. In particular, the in-degree and out-degree of each vertex of $Y_\tb$ are equal to 1. {Furthermore, each vertex in $Y_\tb$ lies on a unique blue cycle graph}.  The same can be said about $Y_{\textup{g}}$. Therefore, $Y$ satisfies properties b), c) and d) of Definition~\ref{def:crater-tec}.

Fix a vertex $v\in\VV(Y)$. Let $s$ be the minimal integer such that the following holds: there exist a path in $Y_\tb$ of length $s$ and a path in $Y_\tg$, both of which start at $v$ and terminate at the same vertex.
      As $\fb_Y\circ \fg_Y=\fg_Y\circ \fb_Y$, the integer $s$ is independent of the choice of $v$ within a fixed cycle of $Y_\fb$. Let $\sigma\in \Gal(Y/X)$. If we determine $s$ with respect to $\sigma(v)$ instead of $v$, we obtain the same value for $s$. As each cycle in $Y_\fb$ contains at least one Galois conjugate of the fixed vertex $v$, the integer $s$ does not depend on the choice of $v$. Furthermore, the minimality of $s$ implies that $s$ is a divisor of $s_1$.
      
      Let $r=s_1/s$. A cycle in $Y_\tb$ is decomposed into $r$ disjoint paths, each of which corresponds to a path of fixed length in $Y_\tg$. Therefore, the length of a cycle in $Y_\tg$ is given by $rt$ for some positive integer $t$. In particular, the number of connected components of $Y_{\textup{b}}$ is given by 
    \[\frac{tr}{r}=t.\]
    Thus, $Y$ contains in total $rst$ vertices, which verifies property a). 
    
    It remains to verify property e). Let $C_\tb$ (resp. $C_\tg$) be the unique cycle of length $rs$ in $Y_\tb$ (resp. $rt$ in $Y_\tg$) passing through $v\in \VV(Y)$. Then $C_\tb$ and $C_\tg$ intersect at $r$ vertices $\{v_1,\dots v_r\}$. Let $w=v_i$ for some $i$ such that there is a path of length $s$ in $Y_\tb$  going from $v$ to $w$. Let $c'$ be the length of the shortest path in $Y_\tg$ going from $v$ to $w$. Then the minimality of $c'$ implies that there is a cycle of length $rc'$ in $Y_\tg$ passing through all the vertices in $\{v_1,\dots,v_r\}$. It follows that $rt\mid rc'$. In particular, $t\mid c'$. Let $c=c'/t$. Then $C_\tb$ and $C_\tg$ meet at every $s$ (resp. $ct$) steps along  $C_\tb$ (resp. $C_\tg$). Hence, we conclude that $Y$ is an abstract tectonic crater of parameters $(r,s,t,c)$.
\end{proof}
\begin{example}
    Let $X$ be the tectonic crater with parameters $(5,1,1,2)$ as in \cite[Example~6.19]{LM2}. Let $\alpha$ be the voltage assignment on $X$ that assigns to each edge the non-trivial element in $\ZZ/2\ZZ$ and let $Y=X(\ZZ/2\ZZ,\alpha)$. Then $Y$ is connected and $Y/X$ is Galois with Galois group $\ZZ/2\ZZ=\langle \sigma \rangle$. Let $v$ be a vertex of $X$ and $\tau\in \ZZ/2\ZZ$, then \[\fb_Y\circ \fg_Y((v,\tau))=(\fb_X\circ \fg_X(v),\tau\sigma^2)=(\fg_X\circ \fb_X (v),\tau\sigma^2)=\fg_Y\circ \fb_Y((v,\tau)).\]
    We can check by direct computation that $Y$ is a tectonic crater of parameters $(10,1,1,7)$:
\begin{center}
    \begin{tikzpicture}[scale=1.0, every node/.style={circle, draw, fill=black, minimum size=4pt, inner sep=0pt}]

\def\n{10} 
\def\R{2}  
\foreach \i in {1,...,\n} {
    \node (x-\i) at ({360/\n * (\i - 1)}:\R) {};
}
\foreach \i [evaluate=\i as \next using {mod(\i,\n)+1}] in {1,...,\n} {
    \draw[->-=.5,blue,line width=0.3mm] (x-\i) -- (x-\next);
}
\foreach \i [evaluate=\i as \third using {mod(\i+2,\n)+1}] in {1,...,\n} {
    \draw[->-=.5,Green,densely dotted] (x-\i) -- (x-\third);
}
\end{tikzpicture}
\end{center}

    Let $\beta$ be a voltage assignment on $X$ that sends a fixed edge of $X$ a generator of $\ZZ/3\ZZ$, and sends all the other edges to the identity element of $\ZZ/3\ZZ$. Let $Y'=X(\ZZ/3\ZZ,\beta)$. Then $Y'$ is as follows:
     \begin{center}
     \begin{tikzpicture}[scale=0.6, every node/.style={circle, draw, fill=black, minimum size=4pt, inner sep=0pt}]
\foreach \X[count=\Y] in {black,black,black,black,black}
{\node[draw,circle,black] (x-\Y) at ({72*\Y+18}:2){}; }
\foreach \X[count=\Y] in {0,...,4}
{\ifnum\X=0
\else
\draw[->-=.5,blue,line width=0.3mm] (x-\X) --(x-\Y) ;
\fi}
\draw[->-=.5,Green,densely dotted] (x-1)--(x-4);
\draw[->-=.5,Green,densely dotted] (x-4)--(x-2);
\draw[->-=.5,Green,densely dotted] (x-2)--(x-5);
\draw[->-=.5,Green,densely dotted] (x-5)--(x-3);
\draw[->-=.5,Green,densely dotted] (x-3)--(x-1); 

\begin{scope}[shift={(5,0)}]
\foreach \X[count=\Y] in {black,black,black,black,black}
{\node[draw,circle,black] (x2-\Y) at ({72*\Y+18}:2){}; }
\foreach \X[count=\Y] in {0,...,4}
{\ifnum\X=0
\else
\draw[->-=.5,blue,line width=0.3mm] (x2-\X) --(x2-\Y) ;
\fi}
\draw[->-=.5,Green,densely dotted] (x2-1)--(x2-4);
\draw[->-=.5,Green,densely dotted] (x2-4)--(x2-2);
\draw[->-=.5,Green,densely dotted] (x2-2)--(x2-5);
\draw[->-=.5,Green,densely dotted] (x2-5)--(x2-3);
\draw[->-=.5,Green,densely dotted] (x2-3)--(x2-1);

\end{scope}

\begin{scope}[shift={(10,0)}]
\foreach \X[count=\Y] in {black,black,black,black,black}
{\node[draw,circle,black] (x3-\Y) at ({72*\Y+18}:2){}; }
\foreach \X[count=\Y] in {0,...,4}
{\ifnum\X=0
\else
\draw[->-=.5,blue,line width=0.3mm] (x3-\X) --(x3-\Y) ;
\fi}
\draw[->-=.5,Green,densely dotted] (x3-1)--(x3-4);
\draw[->-=.5,Green,densely dotted] (x3-4)--(x3-2);
\draw[->-=.5,Green,densely dotted] (x3-2)--(x3-5);
\draw[->-=.5,Green,densely dotted] (x3-5)--(x3-3);
\draw[->-=.5,Green,densely dotted] (x3-3)--(x3-1);
\end{scope}

\draw[->-=.5,blue,line width=0.3mm] (x-5) to[bend left=12] (x2-1) ;
\draw[->-=.5,blue,line width=0.3mm] (x2-5) to[bend left=12](x3-1) ;
\draw[->-=.5,blue,line width=0.3mm] (x3-5) to[out=120,in=20](x-1) ;
     \end{tikzpicture}
     \end{center}

In this case, $Y'/X$ is again Galois and is visibly connected. However, one can verify that $\fg_{Y'}\circ \fb_{Y'}\neq \fb_{Y'}\circ \fg_{Y'}$ and that $Y'$ is not a tectonic crater, as property e) is not satisfied.
\end{example}
 
\begin{lemma}
\label{lem:galois-non-tectonic}
    Let $Y/X$ be a Galois covering of finite directed graphs. Assume that $X$ is a directed cycle graph and that $Y/X$ is not $2$-sheeted. Then $Y$ is a directed cycle graph. 
\end{lemma}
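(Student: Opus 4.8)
The plan is to reduce the statement to the connectedness of $Y$ and then to analyse the deck group of a disjoint union of cyclic coverings of $X$.

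First I would observe that $Y$ is automatically a disjoint union of directed cycles. Indeed, every vertex of the directed cycle $X$ has exactly one incoming and one outgoing edge, so by local bijectivity of $\pi$ the same holds for every vertex of $Y$; and a finite directed graph in which every vertex has in-degree and out-degree equal to $1$ is a disjoint union of directed cycles. Hence it suffices to prove that $Y$ is connected.

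So suppose $Y=Y_1\sqcup\cdots\sqcup Y_c$ is the decomposition into connected components. For each $i$, the restriction $\pi\colon Y_i\to X$ is surjective (walk around $X$ from the image of any vertex of $Y_i$, lifting edges) and locally bijective, hence a covering; being a connected covering of the cycle $X$, the graph $Y_i$ is itself a directed cycle and is an $a_i$-sheeted Galois covering of $X$ with $\Deck(Y_i/X)\cong\ZZ/a_i\ZZ$ for some integer $a_i\ge1$. In particular $d=a_1+\cdots+a_c$. Two components are isomorphic over $X$ precisely when they have the same number of sheets, and a deck transformation of $Y/X$ permutes the components of a given number of sheets among themselves while restricting to a deck transformation on each. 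Writing $c_a=\#\{\,i : a_i=a\,\}$, this identifies $\Deck(Y/X)$ with $\prod_{a\ge1}\bigl((\ZZ/a\ZZ)\wr S_{c_a}\bigr)$, where $S_{c_a}$ is the symmetric group, so that $|\Deck(Y/X)|=\prod_{a\ge1}a^{c_a}\,c_a!$.

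Since $Y/X$ is Galois, $|\Deck(Y/X)|$ equals the number of sheets $d$, which turns the above into the numerical identity
\[
\sum_{a\ge1}a\,c_a \;=\; \prod_{a\ge1}a^{c_a}\,c_a!.
\]
The plan is to deduce from this that $c=1$ whenever $d\ne 2$: one bounds the right-hand side below, noting that each factor $a^{c_a}c_a!$ is at least $a\,c_a$ and strictly larger as soon as $c_a\ge2$ with $a\ge2$, that a product of two or more such factors dominates their sum, and that the only way to make the two sides agree with $c\ge2$ is the configuration $c_1=2$, $c_a=0$ for $a\ge2$, i.e.\ $Y\cong X\sqcup X$ — which is precisely the case excluded by the hypothesis that $Y/X$ is not $2$-sheeted. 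I expect this comparison — equivalently, classifying the cycle types of permutations whose centralizer in the ambient symmetric group has order equal to the number of moved points — to be the only genuinely delicate point, and the step where the non-$2$-sheeted hypothesis is actually used; everything else is formal once one knows that coverings of a cycle are disjoint unions of cycles. A variant of the same argument phrases $Y/X$ via its monodromy permutation $\rho$ on a fibre $\pi^{-1}(v)$: then $\Deck(Y/X)$ is the centralizer of $\rho$, the graph $Y$ is connected exactly when $\rho$ is a single $d$-cycle, and one is reduced to the same combinatorial classification.
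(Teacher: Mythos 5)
Your reduction to a disjoint union of directed cycles and the wreath-product computation $\Deck(Y/X)\cong\prod_{a\ge1}\bigl((\ZZ/a\ZZ)\wr S_{c_a}\bigr)$, hence $|\Deck(Y/X)|=\prod_a a^{c_a}c_a!$, are correct, and up to that point you follow the same counting strategy as the paper. The genuine gap is the combinatorial classification you defer to the end: it is false as stated. Take $c_1=2$, $c_2=1$ and $c_a=0$ otherwise, i.e.\ $Y$ is two copies of the cycle $X$ together with one connected double cover of $X$. Then $\sum_a a\,c_a=4$ and $\prod_a a^{c_a}c_a!=2\cdot2=4$ (this is exactly the boundary case of your heuristic ``a product of two or more factors dominates their sum'', which degenerates at $2\cdot 2=2+2$), so $|\Deck(Y/X)|=d=4$, yet $Y$ is neither connected nor $2$-sheeted. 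Consequently the step you flagged as ``the only genuinely delicate point'' is not just delicate: no argument using only the numerical equality $|\Deck(Y/X)|=d$ can close the proof.

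What is missing is the ingredient the paper uses at the very start of its proof: for a Galois covering, all connected components of $Y$ are isomorphic over $X$ (the deck group permutes the components transitively), so all cycles in $Y$ have the same length. This transitivity is what rules out the mixed-degree configuration above; it is available in every situation where the lemma is applied, since there the coverings arise from voltage assignments (cf.\ Theorem~\ref{thm:Gonet} and Lemma~\ref{connected component galois}), whereas the bare numerical condition $d=|\Deck(Y/X)|$ does not supply it. Once all components have a common degree $a$, your identity collapses to $ac=a^{c}c!$; if $a\ge2$ and $c\ge2$ then $a^{c}c!\ge 2ac>ac$, and if $a=1$ then $c!=c$ forces $c\in\{1,2\}$, with $c=2$ being the excluded $2$-sheeted case. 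This is precisely the paper's argument (its cases $m/k>1$ and $m/k=1$), of which your wreath-product formula is a refinement of the crude bound $|\Deck(Y/X)|\ge s!(m/k)^{s}$ — but it only works after the equal-length step has been secured. (A small additional slip: a deck transformation restricted to a component is an isomorphism onto a possibly different component, not a deck transformation ``on each''; the wreath-product conclusion is nevertheless correct.)
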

\begin{proof}
    Since $X$ is a directed cycle, the out-degree and in-degree of each vertex of $Y$ are equal to $1$. This implies that $Y$ is the disjoint union of cycle graphs. Furthermore, since $Y/X$ is Galois, each cycle has the same length.
    
   The following argument is also given in \cite[pages 65-66]{gonet-thesis} for undirected graphs. We recall it here for the convenience of the reader. Let $m$ be the length of a cycle in $Y$, let $s$ be the number of connected components in $Y$, and let $k$ be the number of vertices in $X$. In particular, $Y/X$ is a $(sm/k)$-sheeted covering and $m$ is divisible by $k$. 
    
    Assume that $(m/k)>1$. Each connected component $Z$ of $Y$ is a covering of $X$ and there are $m/k$ elements in $\textup{Deck}(Z/X)$. If $s>1$, then 
    \[\vert \textup{Deck}(Y/X)\vert \ge s!(m/k)^s>(sm/k),\]
    which contradicts that $Y/X$ is Galois. Therefore, $s=1$ and $Y$ is a directed cycle.

    Assume that $m/k=1$. In this case, the covering $Y/X$ is $s$-sheeted and there are $s!$ deck transformations. Thus, for $Y/X$ to be Galois, we must have $s=2$. This contradicts our assumption that $Y/X$ is not $2$-sheeted.
\end{proof}

\subsection{Preliminary lemmas - II}
Recall that $\mathcal{G}_l^p(n,N)$ is the isogeny graph of oriented supersingular elliptic curves defined in Definition~\ref{def:oriented-graph}. Let $\mathcal{Y}_0$ be a fixed connected component of $\mathcal{G}_l^p(0,N)$. For $n\ge0$, let $\mathcal{Y}_n$ be a connected component of $\cG_l^p(n,N)$ whose image under the natural map $(E,\iota,Q,P)\mapsto (E,\iota,Q)$ contains $\mathcal{Y}_0$. 

 Let $(E,\iota,Q,P)\in\VV(\mathcal{Y}_n)$ for some $\iota:K\hookrightarrow \End(E)\otimes\QQ$. Note that $K$ is independent of the choice of $E$ {by the properties iv)-v) in Definition~\ref{def:oriented-elliptic}}. We shall call $K$ the \textbf{\textit{CM field}} of $\mathcal{Y}_n$. It is clear from the definition that the CM field is independent of $n$.

We shall prove that $\mathcal{Y}_n$ is an undirected (tectonic) $l$-volcano. We begin with the case $n=0$.
\begin{lemma}\label{lem:l-volcano}
    The graph $\mathcal{Y}_0$ is an undirected (tectonic) $l$-volcano. 
\end{lemma}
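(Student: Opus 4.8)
The plan is to forget the level structure, reducing to the level‑free graph, and then transport the volcanic structure along a graph covering. Let $\mathfrak{Y}_0$ be the connected component of the directed graph $\mathfrak{G}_l^p(0,N)$ whose forgetful image is $\mathcal{Y}_0$; it suffices to show $\mathfrak{Y}_0$ is a directed (tectonic) $l$-volcano in the sense of Definitions~\ref{def:crater} and~\ref{def:volcano}. Consider $\partial'\colon\mathfrak{G}_l^p(0,N)\to\mathfrak{G}_l^p(0,1)$, $(E,\iota,Q)\mapsto(E,\iota)$. Since $\gcd(l,N)=1$, every $l$-isogeny $\phi\colon E\to E'$ restricts to an isomorphism $E[N]\xrightarrow{\sim}E'[N]$ on geometric points; hence an $l$-isogeny out of $(E,\iota,Q)$ is determined by the underlying isogeny out of $(E,\iota)$, and one into $(E,\iota,Q)$ by the underlying isogeny into $(E,\iota)$ together with the unique $N$-torsion preimage of $Q$. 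Thus $\partial'$ is locally bijective, and lifting paths (pushing $Q$ forward along an isogeny, pulling it back along the dual) shows $\mathfrak{Y}_0$ surjects onto a connected component $\mathfrak{Z}_0$ of $\mathfrak{G}_l^p(0,1)$; so $\mathfrak{Y}_0/\mathfrak{Z}_0$ is a covering of graphs. A loop of $l$-isogenies in $\mathfrak{Z}_0$ based at $(E,\iota)$ composes to an endomorphism $\theta\in\End(E)$ commuting with $\iota(K)$; such $\theta$ lie in the commutative ring $\iota(K)\cap\End(E)$, and since $\deg\theta$ is a power of $l$ they act on $E[N]$ through mutually commuting automorphisms. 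Therefore the monodromy group of $\mathfrak{Y}_0/\mathfrak{Z}_0$ is abelian, so the covering is Galois (realise $\mathfrak{Y}_0$ via a voltage assignment on $\mathfrak{Z}_0$ with values in this abelian group and apply Lemma~\ref{connected component galois}); write $G$ for its Galois group.

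By the structure theory of oriented supersingular isogeny graphs \cite{onuki,colokohel,arpin-win,arpin-et-all,XZQ} together with a direct check of the blue/green colouring in the split case, $\mathfrak{Z}_0$ is a directed (tectonic) $l$-volcano; let $\VV(\mathfrak{Z}_0)=\bigsqcup_{i\ge0}\VV_i$ be its depth decomposition and set $\VV_i(\mathfrak{Y}_0):=(\partial')^{-1}(\VV_i)$. The covering $\partial'$ transports three of the four axioms of Definition~\ref{def:volcano} to $\mathfrak{Y}_0$: every vertex has out-degree $l+1$ (local bijection); an edge between $\VV_i(\mathfrak{Y}_0)$ and $\VV_j(\mathfrak{Y}_0)$ maps to an edge of $\mathfrak{Z}_0$, hence $i-j\in\{\pm1\}$ or $i=j=0$; and each vertex of depth $\ge1$ has exactly one out-edge to the level below and $l$ to the level above (local bijection, since $\partial'$ preserves depths of targets). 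It remains to identify the crater $C$ of $\mathfrak{Y}_0$, namely the subgraph generated by $\VV_0(\mathfrak{Y}_0)$, which is a covering of the crater of $\mathfrak{Z}_0$. As the descending trees hanging off distinct crater vertices of a volcano are disjoint---a feature of $\mathfrak{Z}_0$ that lifts through $\partial'$---connectedness of $\mathfrak{Y}_0$ forces $C$ to be connected.

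Now distinguish cases by the behaviour of $l$ in the CM field $K$. If $l$ is inert, the crater of $\mathfrak{Z}_0$ is a single vertex, so $C$ is a single vertex; if $l$ is ramified, it is a directed cycle, so the connected covering $C$ is again a directed cycle; in both cases $\mathfrak{Y}_0$ is an $l$-volcano. Suppose $l$ is split and write $l\mathcal{O}=\mathfrak{L}\overline{\mathfrak{L}}$ for the ($l$-maximal) primitive order $\mathcal{O}$ of the crater. Using the class-group action on the crater and the relation $[\mathfrak{L}][\overline{\mathfrak{L}}]=1$ one checks that the crater of $\mathfrak{Z}_0$, coloured blue/green by the kernels $\mathfrak{L}$ and $\overline{\mathfrak{L}}$, is an abstract tectonic crater in the sense of Definition~\ref{def:crater-tec} (of parameters $(\r,1,1,\r-1)$ with $\r$ the order of $[\mathfrak{L}]$). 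The group $G$ preserves $C$ and acts freely transitively on the fibres of $C$ over the crater of $\mathfrak{Z}_0$, so $C$ is a Galois covering of that crater; moreover $\fb_C\circ\fg_C=\fg_C\circ\fb_C$, because a blue followed by a green horizontal $l$-isogeny, and a green followed by a blue one, both have kernel $E[\mathfrak{L}\overline{\mathfrak{L}}]=E[l]$ and hence carry $(E,\iota,Q)$ to the vertex $(E,\iota,[l]Q)$---any discrepancy in $\mathrm{Aut}(E,\iota)=\{\pm1\}$ being absorbed into the equivalence relation on vertices. Lemma~\ref{lem:galois-tectonic} then shows $C$ is an abstract tectonic crater, so $\mathfrak{Y}_0$ is a tectonic $l$-volcano. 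In all cases, applying the forgetful map gives the claim for $\mathcal{Y}_0$.

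The main obstacle is the split case. It requires, first, establishing that the crater of the level-free graph $\mathfrak{Z}_0$ is itself an abstract tectonic crater---an arithmetic input resting on the class-group description of the crater and on $\mathfrak{L}\overline{\mathfrak{L}}=(l)$---and, second, verifying the two hypotheses of the combinatorial Lemma~\ref{lem:galois-tectonic}: that the crater covering is Galois (which follows from the abelian monodromy computed above) and that $\fb_C\circ\fg_C=\fg_C\circ\fb_C$. A further point to handle with care is the connectedness of $C$, which rests on the disjointness of the descending trees of a volcano; and one should record that the standing hypothesis $p\equiv1\pmod{12}$ is exactly what makes $\mathrm{Aut}(E,\iota)=\{\pm1\}$ for every supersingular $E/\FF_{p^2}$, as used in the commutativity argument.
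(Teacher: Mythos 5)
Your route differs from the paper's (which, for $N\ge 2$, imports the construction from the proofs of Theorems 6.21--6.23 of \cite{LM2}, and for $N=1$ cites \cite{onuki,arpin-win}): you instead deduce the level-$N$ statement from the level-free graph via the covering $\partial'\colon\mathfrak{Y}_0\to\mathfrak{Z}_0$, much as the paper itself passes from $n=0$ to $n\ge 1$. Most of this transport works (local bijectivity, the depth axioms, the abelian-monodromy/Galois step, and the commutation of $\fb$ and $\fg$ in the split case), but there is a genuine gap at the point where you identify the crater $C$ of $\mathfrak{Y}_0$: your claim that ``the descending trees hanging off distinct crater vertices \dots lift through $\partial'$, so connectedness of $\mathfrak{Y}_0$ forces $C$ to be connected'' is false. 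The reason is that dual isogenies shift the level structure by $[l]$: if $\phi\colon E\to E_1$ is the ascending isogeny out of a depth-one vertex $(E,\iota,Q)$, then that vertex is adjacent in the undirected graph both to $(E_1,\iota_1,\phi(Q))$ (its ascending edge) and to $(E_1,\iota_1,R)$ with $[l]R=\phi(Q)$ (the descending edge into it), and these are distinct crater vertices in general. So the descending parts over distinct depth-zero vertices of $\mathfrak{Y}_0$ are \emph{not} disjoint, and down-and-up excursions connect $(E,\iota,Q)$ to $(E,\iota,[l^m]Q)$ without using any horizontal edge. The inert case makes the failure concrete and also falsifies your conclusion there: the crater of $\mathfrak{Z}_0$ is a single vertex, but $C$ is its whole fibre, a totally disconnected set of generally several vertices (this is exactly what Theorem~\ref{thm:structure3} asserts), not ``a single vertex'' and not connected. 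That the volcano conclusion survives in the inert case is only because the definition of an abstract crater allows a totally disconnected graph.

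The gap matters in the ramified and split cases, where you genuinely need $C$ connected (to conclude it is a single directed cycle, and to apply Lemma~\ref{lem:galois-tectonic}, which requires a Galois covering of \emph{connected} graphs). Connectedness is true there, but the correct argument is the one in Remark~\ref{remark-connected} / Lemma~\ref{lem:galois}: in a path between two depth-zero vertices, each maximal descending run is followed by an ascending run of the same length, the composite acts as $[l^m]$ on the level structure, and when $l$ is split or ramified one can realize $[l]$ as $\widehat{\phi}\circ\phi$ with $\phi$ horizontal, so the path can be replaced by one lying entirely in $C$. With that substitution for your ``disjoint trees'' step (and with the inert case restated as ``$C$ is totally disconnected''), your covering-theoretic proof goes through.
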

\begin{proof}
If $N=1$, $\mathcal{Y}_0$ is indeed an $l$-volcano by \cite[Proposition~4.1]{onuki} and \cite[Theorem~1]{arpin-win}. 

Assume now that $N\ge 2$. {The result follows from \cite[proofs of Theorems~6.21, 6.22 and 6.23]{LM2}, which deal with the cases where $l$ is split, ramified and inert in $K$ separately. We outline the main ideas here for the convenience of the reader. Let $\mathfrak{Y}_0$ be the pre-image of $\mathcal{Y}_0$ in $\mathfrak{G}_l^p(0,N)$ under the forgetful map $\mathfrak{G}_l^p(0,N)\rightarrow\cG_l^p(n,N)$.}

Let $K$ be the CM field of $\cY_0$. Suppose that $\ell$ is split in $K$.
  Let $v=(E,\iota,Q,P)\in\VV(\fY_0)$. Let $\cO$ be a primitive order with respect to $(E,\iota)$.  Since $(E,\iota)$ does not admit ascending isogenies, $l$ does not divide the conductor of $\cO$. Thus, $l$ splits into two distinct ideals in $\cO$. Starting at $v$, and propagating along horizontal edges, we obtain a finite connected subgraph $\cZ_0$ of $\cY_0$. Each edge corresponds to one of the two distinct primes of $\cO$ lying above $l$. This allows us to assign to each edge a color - blue or green. By analyzing the action of special elements of $K$ on $E[l]$, we can show that the corresponding coloring functions (in the sense of Definition~\ref{def:color-function}) commute and check that $\cZ_0$ is an abstract tectonic crater. The other vertices of $\fY_0$ would have positive depths. Basic properties of $l$-isogenies allow us to verify that $\fY_0$ is indeed a tectonic $l$-volcano.

When $l$ is ramified in $K$, the construction of $\cZ_0$ above would result in a directed cycle - i.e. an abstract crater, rather than an abstract tectonic crater. In the inert case, there are no horizontal edges, which means that $\cZ_0$ consists of isolated vertices without any edges. We remark that it is assumed in \cite{LM2}  that $N>2$. In fact, this condition is not used in the construction of $\cZ_0$ outlined above. Furthermore, the concept of "double intertwinment" does not arise in the context of the present article. The reason is that the triples $(E,\iota, Q)$ and $(E,\iota, -Q)$ give rise to the same vertex here, whereas in \emph{loc. cit.}, they correspond to two distinct vertices. 
\end{proof}
\begin{remark}
\label{rem:level-preserving}
    The depth of a vertex of $\mathcal{Y}_0$ does not depend on $N$ and $n$. Thus, the covering $\pi_n\colon \mathcal{Y}_n\to \mathcal{Y}_0$ satisfies
    \[\textup{depth}(v)=\textup{depth}(\pi_n(v)),\quad\forall v\in\VV(\mathcal{Y}_n)
    \]
    In particular, the depth zero subgraph of $\mathcal{Y}_n$ is mapped to the crater of $\mathcal{Y}_0$ and each vertex of level at least $1$ in $\mathcal{Y}_n$ admits exactly one ascending edge and $l$ descending edges. 
\end{remark}

\begin{lemma}\label{lem:Gal} 
   The covering $\mathcal{Y}_n/\mathcal{Y}_0$ is Galois. The Galois group is isomorphic to a subgroup of $(\ZZ/p^n\ZZ)^\times$.
\end{lemma}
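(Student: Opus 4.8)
The plan is to manufacture enough deck transformations of the covering $\pi_n\colon\mathcal{Y}_n\to\mathcal{Y}_0$ to force it to be Galois, and then to read off the Galois group. Recall first that $\pi_n$, sending $(E,\iota,Q,P)$ to $(E,\iota,Q)$, is a covering of finite connected graphs: this is the oriented counterpart of $\pi_{n,0}$ from Corollary~\ref{cor:cover} (the number of $\ell$-isogenies out of a vertex is unchanged by adjoining the level structure $P$, since the induced map $\phi_n$ restricts to an isomorphism $\ker(V_E^n)\xrightarrow{\sim}\ker(V_{E'}^n)$ because $\deg\phi=\ell$ is prime to $p$), and it is surjective because $\mathcal{Y}_0$ is connected; cf.\ Remark~\ref{rem:level-preserving}, where $\pi_n$ is already treated as a covering.

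Next I would let $G=(\ZZ/p^n\ZZ)^\times$ act on $\mathcal{G}_l^p(n,N)$ by $\rho_u\colon(E,\iota,Q,P)\mapsto(E,\iota,Q,uP)$, leaving the edge determined by an $\ell$-isogeny $\phi$ unchanged. One checks that $\rho_u$ is a well-defined graph automorphism — $uP$ is again a generator of $\ker(V^n)$, the equivalence relations on triples and on isogenies are preserved, and $\phi_n$ commutes with multiplication by $u$ — and that it is compatible with the projection to $\mathcal{G}_l^p(0,N)$; thus $G$ permutes the connected components of $\mathcal{G}_l^p(n,N)$ lying above $\mathcal{Y}_0$. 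By \cite[Lemma~12.2.3]{katz-mazur}, $G\cong\mathrm{Aut}(\ker(V^n))$ acts simply transitively on the generators of $\ker(V^n)$, hence transitively on each fibre $\pi_n^{-1}(v_0)$ of $\mathcal{G}_l^p(n,N)\to\mathcal{G}_l^p(0,N)$; combining this with the fact that every component above $\mathcal{Y}_0$ surjects onto $\mathcal{Y}_0$, it follows both that $G$ acts transitively on those components and that, writing $H=\mathrm{Stab}_G(\mathcal{Y}_n)$, the set $\pi_n^{-1}(v_0)\cap\VV(\mathcal{Y}_n)$ is a single $H$-orbit for every $v_0\in\VV(\mathcal{Y}_0)$. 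Since $\mathcal{Y}_n$ is connected, a deck transformation of $\mathcal{Y}_n/\mathcal{Y}_0$ is determined by its value at one vertex, so $\Deck(\mathcal{Y}_n/\mathcal{Y}_0)$ acts freely on $\VV(\mathcal{Y}_n)$ and has order at most the number of sheets of $\pi_n$; on the other hand the image $\overline H$ of $H$ in $\Deck(\mathcal{Y}_n/\mathcal{Y}_0)$ acts transitively on $\pi_n^{-1}(v_0)\cap\VV(\mathcal{Y}_n)$, so $|\overline H|$ equals that number. Hence $\overline H=\Deck(\mathcal{Y}_n/\mathcal{Y}_0)$ has order equal to the number of sheets, i.e.\ $\mathcal{Y}_n/\mathcal{Y}_0$ is Galois with $\Gal(\mathcal{Y}_n/\mathcal{Y}_0)=\overline H$.

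Finally I would identify $\overline H=H/K$, where $K$ is the stabiliser in $H$ of a fixed vertex $(E,\iota,Q,P)$ of $\mathcal{Y}_n$. Since $p\equiv1\pmod{12}$ excludes the supersingular $j$-invariants $0$ and $1728$, every $E$ occurring has $\mathrm{Aut}(E)=\{\pm1\}$; one then checks $K\subseteq\{\pm1\}$, with $K=\{\pm1\}$ possible only when $N\mid2$ (in which case $[-1]$ fixes $Q$ and identifies $(E,\iota,Q,P)$ with $(E,\iota,Q,-P)$). Consequently $\overline H$ is a subquotient of $G=(\ZZ/p^n\ZZ)^\times$, which is cyclic since $p$ is odd, so $\overline H$ is cyclic of order dividing $|G|$ and is therefore isomorphic to the unique subgroup of $(\ZZ/p^n\ZZ)^\times$ of that order. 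The only genuinely delicate point — and it is a bookkeeping one — is this last step when $N\le2$: the $G$-action on fibres is then not free, $\overline H$ is a priori only a subquotient rather than a subgroup, and one invokes cyclicity of $(\ZZ/p^n\ZZ)^\times$ to recover an honest embedding. One could alternatively run the argument through Lemma~\ref{connected component galois}, realising $\mathcal{Y}_n$ as a connected component of a derived graph over $\mathcal{Y}_0$ with structure group $(\ZZ/p^n\ZZ)^\times$, but the same $N\le2$ subtlety reappears when matching the derived graph with the fibres of $\pi_n$.
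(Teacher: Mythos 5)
Your argument is correct, and it reaches the conclusion by a more hands-on route than the paper does. The paper's proof is essentially two lines: it pulls back (via $\partial$) the voltage assignment constructed in the proof of Theorem~\ref{thmA}, identifies $\mathcal{Y}_n$ with a connected component of a derived graph over $\mathcal{Y}_0$, and then quotes Lemma~\ref{connected component galois} to get both Galoisness and the identification of the Galois group with the stabilizer subgroup of the structure group --- i.e.\ exactly the alternative you mention in your last sentence. You instead build the deck transformations directly, letting $(\ZZ/p^n\ZZ)^\times$ act by $P\mapsto uP$, using transitivity on generators of $\ker(V^n)$ to get transitivity on fibres and on the components above $\mathcal{Y}_0$, and then counting: the stabilizer $H$ of $\mathcal{Y}_n$ maps onto a subgroup of $\Deck(\mathcal{Y}_n/\mathcal{Y}_0)$ acting transitively on a fibre, while $\Deck$ acts freely, so the covering is Galois with group $\overline{H}=H/K$. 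What your approach buys is precisely the point you flag as delicate: when $N\le 2$ the automorphism $-1$ (present since $p\equiv 1\pmod{12}$ forces $\mathrm{Aut}(E)=\{\pm1\}$) identifies $(E,\iota,Q,P)$ with $(E,\iota,Q,-P)$, the fibres have size $p^{n-1}(p-1)/2$ rather than $p^{n-1}(p-1)$, and the Galois group is a priori only a subquotient of $(\ZZ/p^n\ZZ)^\times$; your appeal to cyclicity of $(\ZZ/p^n\ZZ)^\times$ for odd $p$ legitimately converts this into a subgroup, a subtlety the paper's short proof (which asserts the covering is $p^{n-1}(p-1)$-sheeted and works with the structure group wholesale) glosses over. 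The cost is that you re-prove by hand the mechanism (abelian structure group acting on fibres of a derived graph) that Lemma~\ref{connected component galois} and Theorem~\ref{thm:Gonet} already encapsulate, so the two proofs rest on the same underlying symmetry even though the bookkeeping is organized differently.
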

\begin{proof}
    By definition, $\mathcal{Y}_n/\mathcal{Y}_0$ is a $p^{n-1}(p-1)$-sheeted covering. Let $\mathcal{X}_0=\partial(\cY_0)${, where $\partial$ is given by Definition~\ref{def:partial}, }and $\mathcal{X}_n=\partial(\mathcal{Y}_n)$.
    
     Recall from the proof of Theorem~\ref{thmA} that $\mathcal{X}_n$ is a connected component of the derived graph of a voltage assignment $\alpha_n$ on $\mathcal{X}_0$. Let $\beta_n=\alpha_n\circ\partial$, which is a voltage assignment on $\mathcal{Y}_0$. We can identify $\mathcal{Y}_n$ with a connected component of $X(\beta_n,\Z/p^n\Z)$. Thus, it follows from Lemma \ref{connected component galois} that $\mathcal{Y}_n/\mathcal{Y}_0$ is Galois. 
\end{proof}

\begin{lemma}
\label{lem:galois}
Let $Z_0$ be the crater of $\mathcal{Y}_0$ and let $Z_n$ be the subgraph of depth zero vertices of $\mathcal{Y}_n$. Assume that $Z_0$ is connected, then $Z_n/Z_0$ is a Galois covering. The Galois group is isomorphic to a subgroup of $(\ZZ/p^n\ZZ)^\times$.
\end{lemma}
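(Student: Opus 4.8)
The plan is to deduce the lemma by restricting the Galois covering $\pi_n\colon\mathcal{Y}_n\to\mathcal{Y}_0$ of Lemma~\ref{lem:Gal} to the depth-zero loci and then invoking the voltage-assignment formalism of Section~\ref{sec:voltage}. First I would check that $Z_n=\pi_n^{-1}(Z_0)$ as subgraphs: an $l$-isogeny between two depth-zero oriented elliptic curves is necessarily horizontal, since an ascending or descending $l$-isogeny changes the $l$-adic valuation of the conductor of a primitive order, so $Z_0$ (and likewise $Z_n$) is exactly the full subgraph of $\mathcal{Y}_0$ (resp. $\mathcal{Y}_n$) spanned by its depth-zero vertices; and $\pi_n$ preserves depth by Remark~\ref{rem:level-preserving}. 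As the restriction of a graph covering to the full preimage of a subgraph is again a covering, $\pi_n$ restricts to a covering $Z_n\to Z_0$.

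Next, recall from the proof of Lemma~\ref{lem:Gal} that $\mathcal{Y}_n$ is a connected component of a derived graph of a voltage assignment $\beta_n$ on $\mathcal{Y}_0$, and that $\beta_n$ takes values in a quotient of $(\mathbb{Z}/p^n\mathbb{Z})^\times$ --- here one uses that the element $t_\phi$ attached to an $l$-isogeny $\phi$ in the construction of Theorem~\ref{thmA} is a unit of $\mathbb{Z}_p$, because $\phi_*\colon\mathcal{V}(E)\to\mathcal{V}(E')$ is an isomorphism when $l\neq p$. By Lemma~\ref{connected component galois} we may write $\mathcal{Y}_n\cong\mathcal{Y}_0(\beta,H_n)$ with $H_n=\Gal(\mathcal{Y}_n/\mathcal{Y}_0)\leq(\mathbb{Z}/p^n\mathbb{Z})^\times$, and restricting $\beta$ to the edges of $Z_0$ identifies $Z_n$ with the derived graph $Z_0(\beta|_{\EE(\tilde Z_0)},H_n)$. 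Since $Z_0$ is connected, Theorem~\ref{thm:Gonet} then yields the conclusion --- with $\Gal(Z_n/Z_0)\cong H_n\leq(\mathbb{Z}/p^n\mathbb{Z})^\times$ --- provided $Z_n$ is connected.

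Connectedness of $Z_n$ is the main obstacle. I would establish it by showing that the monodromy homomorphism $\mu\colon\pi_1(Z_0)\to\Gal(\mathcal{Y}_n/\mathcal{Y}_0)$ is surjective: then a fibre of $\pi_n$ over a vertex of $Z_0$, which is a torsor under $\Gal(\mathcal{Y}_n/\mathcal{Y}_0)$, is a single orbit under lifts of closed walks in $Z_0$, forcing $Z_n$ to be connected. Using the volcano structure of $\mathcal{Y}_0$ (Lemma~\ref{lem:l-volcano}) and the fact that $\pi_n$ preserves depth, one sees that $\pi_1(\mathcal{Y}_0)$ is generated by $\pi_1(Z_0)$ together with the \emph{descend-one-level-then-ascend} loops attached to the body of the volcano; the voltage of each such loop is $t_\phi t_{\widehat\phi}\equiv l\pmod{p^n}$, because $\widehat\phi\circ\phi=[l]$. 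Hence $\Gal(\mathcal{Y}_n/\mathcal{Y}_0)$ is generated by $\Image(\mu)$ and the class of $l$ modulo $p^n$. On the other hand, in the split and ramified cases $Z_0$ is an abstract tectonic crater, respectively a directed cycle, so it contains an edge coming from a horizontal $l$-isogeny $\psi$, and the loop in $Z_0$ formed by $\psi$ followed by its dual has voltage $t_\psi t_{\widehat\psi}\equiv l\pmod{p^n}$ as well; thus $l\bmod p^n$ already lies in $\Image(\mu)$, so $\mu$ is surjective and $Z_n$ is connected.
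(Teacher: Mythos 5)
Your overall strategy is the paper's: Remark~\ref{rem:level-preserving} makes $Z_n/Z_0$ a covering, Lemma~\ref{lem:Gal} together with Lemma~\ref{connected component galois} exhibits $Z_n$ as the derived graph of an abelian-group-valued voltage assignment restricted to $Z_0$ (with group inside $(\ZZ/p^n\ZZ)^\times$), everything reduces to connectedness of $Z_n$, and Theorem~\ref{thm:Gonet} finishes. The connectedness mechanism is also the paper's in spirit: below-crater excursions only implement multiplication by powers of $l$, which horizontal crater edges already realize because $\widehat\psi\circ\psi=[l]$ and $Z_0$ connected forces $l$ to be split or ramified.

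There is, however, a genuine imprecision in how you execute that last step. When $N>1$, the ``descend-one-level-then-ascend'' composite based at $(E,\iota,Q)$ is \emph{not} a loop: since $\widehat\phi\circ\phi=[l]$, the ascending (dual) edge ends at $(E,\iota,[l]Q)$, which is a different vertex of $\cY_0$ unless $l\equiv\pm1\pmod N$ — this is exactly the phenomenon recorded in the proof of Lemma~\ref{connected-components}, where the dual of an edge out of $(E,Q,P)$ lands at $(E,[l]Q,[l]P)$. The same applies to ``$\psi$ followed by its dual'' inside $Z_0$. Hence the claims that $\Gal(\cY_n/\cY_0)$ is generated by $\Image(\mu)$ together with the class of $l$, and that $l\bmod p^n$ already lies in $\Image(\mu)$, are not justified as written: those composites are paths between distinct crater vertices, not elements of $\pi_1(Z_0)$, and it is not clear that $l$ itself is the voltage of any closed walk. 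The paper sidesteps this by working with path segments rather than loops (Remark~\ref{remark-connected}, applied inside $\cY_n$): in a path $\fP$ between two depth-zero vertices, each maximal descending stretch is followed by an ascending stretch of the same length, the composite being $[l^m]$, and this segment is replaced by a horizontal path $\widehat\psi\circ\psi\circ\cdots\circ\widehat\psi\circ\psi$ with the \emph{same endpoints}; this produces a path lying entirely in $Z_n$ and gives connectedness directly. Your argument becomes correct once the monodromy bookkeeping is replaced by this endpoint-preserving substitution (or, equivalently, once surjectivity of $\mu$ is deduced from the fact that every closed walk in $\cY_0$ has the same voltage as a closed walk in $Z_0$ obtained by such substitutions).
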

\begin{remark}
\label{remark-connected}
    Suppose that $Z_0$ consists of more than one vertex. We have seen in the proof of Lemma~\ref{lem:l-volcano} that $Z_0$ is connected only if $l$ is split or ramified in $K$.  Let $v$ and $w$ be two distinct vertices in $Z_0$, and let $\fP$ be a path in $\cY_0$ going from $v$ to $w$. We can see from the structure of a (tectonic) $l$-volcano that a maximal sequence of consecutive descending edges in $\fP$ is always followed by a sequence of ascending edges of the same length. The combination of these two paths is equivalent to composing an isogeny of $l$-power degree with its dual. In other words, it is equivalent to applying $[l^m]$ for some integer $m$. We may replace $[l]$ by $\widehat{\phi}\circ\phi$, where $\phi$ is a horizontal $\ell$-isogeny (which exists when $l$ is split or ramified in $K$). Therefore, we may replace $\fP$ with a path consisting of only edges in $Z_0$. 
\end{remark}
\begin{proof}
    By Remark \ref{rem:level-preserving}, $Z_n/Z_0$ is a graph covering, and by Lemma~\ref{lem:Gal}, the cover $\mathcal{Y}_n/\mathcal{Y}_0$ is Galois with an abelian Galois group. Therefore,  $Z_n=X(\alpha,G)$ for some voltage assignment $\alpha$ on $Z_0$ taking values in an abelian group $G$. 
    
    Let $v$ and $w$ be two vertices in $Z_n$. As $\mathcal{Y}_n$ is connected, there exists a path $\mathfrak{P}$ from $v$ to $w$. {The same argument as in Remark \ref{remark-connected} implies that} we may assume that $\fP$ consists only of horizontal edges. In particular, $Z_n$ is connected. Hence, it follows from Theorem~\ref{thm:Gonet} that $Z_n/Z_0$ is Galois. 
\end{proof}

\subsection{Proof of Theorem~\ref{thmB}}

Recall that $\mathcal{Y}_n$ denotes a connected component of $\mathcal{G}_l^p(n,N)$, which is defined in Definition~\ref{def:oriented-graph}.
\begin{theorem}\label{thm:structure1}
Let $\mathcal{Y}_n$ be a connected component of $\mathcal{G}_l^p(n,N)$.
    Assume that $\ell$ splits in the CM field of $\mathcal{Y}_n$. Then, $\mathcal{Y}_n$ is an undirected tectonic $l$-volcano.
\end{theorem}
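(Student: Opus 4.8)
The plan is to realize $\mathcal{Y}_n$ as the image under the forgetful map of a directed tectonic $l$-volcano. Working as in the proof of Lemma~\ref{lem:l-volcano}, let $\mathfrak{Y}_n$ be the pre-image of $\mathcal{Y}_n$ in $\mathfrak{G}_l^p(n,N)$ and let $\mathfrak{Y}_0$ be the pre-image of $\mathcal{Y}_0$ in $\mathfrak{G}_l^p(0,N)$; since $\ell$ splits in $K$, that proof exhibits $\mathfrak{Y}_0$ as a directed tectonic $l$-volcano, and it suffices to show the same for $\mathfrak{Y}_n$. The natural projection $(E,\iota,Q,P)\mapsto(E,\iota,Q)$ induces a covering $\pi_n\colon\mathfrak{Y}_n\to\mathfrak{Y}_0$ lifting the Galois covering $\mathcal{Y}_n/\mathcal{Y}_0$ of Lemma~\ref{lem:Gal}. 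Combining $\pi_n$ with Remark~\ref{rem:level-preserving}, the depth function yields a decomposition $\VV(\mathfrak{Y}_n)=\bigcup_{i\ge0}\VV_i$ with $\VV_i=\pi_n^{-1}(\VV_i^{(0)})$, where $\VV(\mathfrak{Y}_0)=\bigcup_{i\ge0}\VV_i^{(0)}$; since $\pi_n$ is locally bijective, every vertex of $\mathfrak{Y}_n$ has out-degree $l+1$; and, $\mathfrak{Y}_0$ being a tectonic $l$-volcano, an edge of $\mathfrak{Y}_n$ joins vertices whose depths differ by at most one (the only exception being a pair of depth-zero vertices), and every vertex of depth $\ge 1$ has exactly one ascending edge and $l$ descending edges. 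In view of Definition~\ref{def:crater}, what remains is to show that the subgraph $Z_n$ generated by $\VV_0$ --- the crater of $\mathfrak{Y}_n$ --- is an abstract tectonic crater.

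I would deduce this from Lemma~\ref{lem:galois-tectonic}, applied to the covering $Z_n/Z_0$, where $Z_0$ is the crater of $\mathfrak{Y}_0$. Because $\ell$ splits in $K$, the graph $Z_0$ is connected and is an abstract tectonic crater by the proof of Lemma~\ref{lem:l-volcano}; hence Lemma~\ref{lem:galois} applies and shows both that $Z_n$ is connected and that $Z_n/Z_0$ is a Galois covering of finite connected graphs. We color the edges of $Z_n$ by pulling back the coloring of $Z_0$ along $\pi_n$, as in Definition~\ref{def:color-function}. Concretely, every vertex of $Z_n$ has the same primitive order $\cO$ --- this order depends only on the image of the vertex in $Z_0$, and the vertices of $Z_0$ all share a common primitive order --- and $\cO$ is maximal at $\ell$ because the vertices of $Z_n$ have depth zero; thus $\ell\cO=\mathfrak{L}\overline{\mathfrak{L}}$ for two distinct primes, the blue edge out of $(E,\iota,Q,P)$ being the horizontal $\ell$-isogeny with kernel $E[\mathfrak{L}]$ and the green edge the one with kernel $E[\overline{\mathfrak{L}}]$.

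The last hypothesis of Lemma~\ref{lem:galois-tectonic}, namely $\fb_{Z_n}\circ\fg_{Z_n}=\fg_{Z_n}\circ\fb_{Z_n}$, is the step I expect to be the crux. Fix $v=(E,\iota,Q,P)\in\VV(Z_n)$. Evaluating $\fb_{Z_n}\circ\fg_{Z_n}(v)$ amounts to composing the $\overline{\mathfrak{L}}$-isogeny out of $E$ with the $\mathfrak{L}$-isogeny out of its target, while $\fg_{Z_n}\circ\fb_{Z_n}(v)$ composes the two isogenies in the opposite order; in both cases the resulting isogeny out of $E$ has kernel $E[\mathfrak{L}]+E[\overline{\mathfrak{L}}]=E[\ell]$, so the two composites agree up to the canonical isomorphism identifying their targets with $E/E[\ell]$. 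Since the assignment $\phi\mapsto\phi_n$ of Remark~\ref{rk:commute} is compatible with composition of isogenies, the two images of $P$ in $\ker(V_{E/E[\ell]}^n)$ correspond under this isomorphism, and the two pushed-forward orientations and the two images of $Q$ correspond as well. Hence $\fb_{Z_n}\circ\fg_{Z_n}(v)$ and $\fg_{Z_n}\circ\fb_{Z_n}(v)$ represent the same vertex of $Z_n$. By Lemma~\ref{lem:galois-tectonic}, $Z_n$ is an abstract tectonic crater; therefore $\mathfrak{Y}_n$ is a directed tectonic $l$-volcano and $\mathcal{Y}_n$, its image under the forgetful map, is an undirected tectonic $l$-volcano.
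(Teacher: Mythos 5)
Your proposal is correct and takes essentially the same route as the paper's proof: pass to the directed pre-image $\mathfrak{Y}_n$, get the off-crater volcano structure from Remark~\ref{rem:level-preserving}, use Lemma~\ref{lem:galois} to see that $Z_n/Z_0$ is a connected Galois covering, color the edges of $Z_n$ by the two primes of the common primitive order $\cO$ above $\ell$, and conclude via Lemma~\ref{lem:galois-tectonic}. Your explicit check that $\fb_{Z_n}$ and $\fg_{Z_n}$ commute (both composites having kernel $E[\ell]$, hence equivalent targets and matching images of $\iota$, $Q$, $P$) spells out a step the paper only asserts.
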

\begin{proof}
Let $Y/X$ be a Galois covering of undirected connected graphs. If $Y$ is a tectonic $l$-volcano, the same is true for $X$. Thus, it suffices to prove the claim in the case $p^n>2$.
    Let $\mathfrak{Y}_n$ be the pre-image of $\mathcal{Y}_n$ in $\mathfrak{G}_l^p(n,N)$. It suffices to show that $\mathfrak{Y}_n$ is a tectonic $l$-volcano. Let $Z_n$ be the subgraph of depth zero vertices of $\mathcal{Y}_n$ and let $Z_0$ be the crater of $\mathcal{Y}_0$. By Lemma \ref{lem:galois}, $Z_n/Z_0$ is Galois.
    
    Let $\cO$ be the primitive order for some $(E,\iota)$ in $\VV(Z_0)$. Recall from the proof Lemma~\ref{lem:l-volcano} that $l$ is split in $\cO$. Therefore, each $l$-isogeny in $Z_n$ corresponds to one of the two prime ideals of the primitive order $\cO$ lying above $l$.  The colors of the edges in {$Z_n$} are chosen accordingly. In particular, the functions $\fb_Y$ and $\fg_Y$ given by Definition~\ref{def:color-function} commute.  Therefore, Lemma~\ref{lem:galois-tectonic} {applied to $\cY_n/\cY_0$} tells us that $Z_n$ is a tectonic crater. It now follows from Remark \ref{rem:level-preserving} that $\mathfrak{Y}_n$ is indeed a tectonic $l$-volcano. 
\end{proof}

\begin{theorem}\label{thm:structure2}
   Let $\mathcal{Y}_n$ be a connected component of $\mathcal{G}_l^p(n,N)$. Assume that $\ell$ is ramified in the CM field of $\mathcal{Y}_n$. Then, $\mathcal{Y}_n$ is an undirected $l$-volcano, and the crater is a cycle graph.
\end{theorem}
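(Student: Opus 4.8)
The plan is to run the proof of Theorem~\ref{thm:structure1} with the ramified-case substitutes. First I would reduce to $p^n>2$, exactly as there: a Galois covering of a connected undirected $l$-volcano with cycle-graph crater is again such a volcano (this is read off from the degree conditions in Definition~\ref{def:volcano}), and $\mathcal{Y}_m/\mathcal{Y}_n$ is Galois for every $m\ge n$, being an intermediate covering of the abelian Galois covering $\mathcal{Y}_m/\mathcal{Y}_0$ supplied by Lemma~\ref{lem:Gal}. I would then pass to the preimage $\mathfrak{Y}_n$ of $\mathcal{Y}_n$ in $\mathfrak{G}_l^p(n,N)$ and reduce the theorem to showing that $\mathfrak{Y}_n$ is an $l$-volcano whose crater is a directed cycle graph.

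The heart of the argument is the identification of the crater. Let $Z_0$ be the crater of $\mathcal{Y}_0$ and let $Z_n$ be the subgraph of depth-zero vertices of $\mathcal{Y}_n$. Since $\ell$ is ramified in the CM field, the analysis in the proof of Lemma~\ref{lem:l-volcano} shows that $Z_0$ is a directed cycle graph: the primitive order has a unique prime above $\ell$, so the horizontal $\ell$-isogenies out of a depth-zero vertex assemble into a single cycle rather than the two-colour configuration occurring in the split case. In particular $Z_0$ is connected, so Lemma~\ref{lem:galois} applies and $Z_n/Z_0$ is a Galois covering with Galois group isomorphic to a subgroup of $(\ZZ/p^n\ZZ)^\times$. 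If this covering is not $2$-sheeted, Lemma~\ref{lem:galois-non-tectonic} immediately gives that $Z_n$ is a directed cycle graph. The $2$-sheeted case I would handle separately, using that $Z_n$ is connected --- this is established in the proof of Lemma~\ref{lem:galois} via the deformation argument of Remark~\ref{remark-connected}, which replaces a run of descending edges followed by the same number of ascending edges (that is, a power of $[l]$) by a product $\widehat\phi\circ\phi$ with $\phi$ a horizontal $\ell$-isogeny, available precisely because $\ell$ is ramified --- together with the fact that every vertex of $Z_n$ has in-degree and out-degree $1$ (it covers the directed cycle $Z_0$); a connected directed graph with all in- and out-degrees equal to $1$ is a directed cycle graph. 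Hence $Z_n$ is an abstract crater in the sense of Definition~\ref{def:volcano}(i).

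To finish, I would assemble the volcano structure on $\mathfrak{Y}_n$. The map $(E,\iota,Q,P)\mapsto(E,\iota,Q,V(P))$ makes $\mathfrak{Y}_n\to\mathfrak{Y}_0$ a graph covering, by the argument of Lemma~\ref{lem:cover} (the number of $\ell$-isogenies depends only on $(E,\iota)$, not on the level structure); being locally bijective it preserves out-degrees, so every vertex of $\mathfrak{Y}_n$ has out-degree $l+1$ because $\mathfrak{Y}_0$ is an $l$-volcano by Lemma~\ref{lem:l-volcano}. By Remark~\ref{rem:level-preserving} the covering preserves depth, the depth-zero subgraph of $\mathfrak{Y}_n$ is $Z_n$, every vertex of positive depth has exactly one ascending edge and $l$ descending edges, and since edges of $\mathfrak{Y}_n$ map to edges of $\mathfrak{Y}_0$ they join only vertices of adjacent depths or two vertices of depth zero. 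These are precisely the conditions of Definition~\ref{def:volcano}, so $\mathfrak{Y}_n$ is an $l$-volcano whose crater is a cycle graph; its image under the forgetful map is $\mathcal{Y}_n$, which is therefore an undirected $l$-volcano with cycle-graph crater.

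The main obstacle is the $2$-sheeted case of $Z_n/Z_0$, which is not covered by Lemma~\ref{lem:galois-non-tectonic}; the resolution rests on the connectedness of $Z_n$, which is exactly the property that fails in the inert case and forces the crater to disconnect there. Everything else is bookkeeping transported from the split-case proof.
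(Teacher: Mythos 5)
Your proposal is correct and follows essentially the same route as the paper, whose proof of Theorem~\ref{thm:structure2} simply transports the argument of Theorem~\ref{thm:structure1} with Lemma~\ref{lem:galois-non-tectonic} in place of Lemma~\ref{lem:galois-tectonic}. Your explicit treatment of the $2$-sheeted case excluded by Lemma~\ref{lem:galois-non-tectonic} --- using the connectedness of $Z_n$ established in the proof of Lemma~\ref{lem:galois}, so that in/out-degree one forces a single directed cycle --- is a welcome refinement of a point the paper leaves implicit.
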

\begin{proof}
    The argument in the proof of Theorem~\ref{thm:structure1} carries over upon replacing Lemma~\ref{lem:galois-tectonic} by Lemma~\ref{lem:galois-non-tectonic}.
\end{proof}
We conclude this section with the following theorem on the inert case.

\begin{theorem}\label{thm:structure3}
  Let $\mathcal{Y}_n$ be a connected component of $\mathcal{G}_l^p(n,N)$.  Assume that $\ell$ is inert in the CM field of $\mathcal{Y}_n$. Then, $\mathcal{Y}_n$ is an undirected volcano, and the crater is disconnected.
\end{theorem}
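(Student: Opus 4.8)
The plan is to follow the strategy of the proofs of Theorems~\ref{thm:structure1} and \ref{thm:structure2}, simplified by the fact that when $\ell$ is inert the crater degenerates to an edgeless graph, so that neither Lemma~\ref{lem:galois-tectonic} nor Lemma~\ref{lem:galois-non-tectonic}, nor the reduction to $p^n>2$, is needed. As in those proofs, it suffices to show that the pre-image $\mathfrak{Y}_n$ of $\mathcal{Y}_n$ in $\mathfrak{G}_\ell^p(n,N)$ is a directed $\ell$-volcano whose crater carries no edges, since $\mathcal{Y}_n$ is the image of $\mathfrak{Y}_n$ under the forgetful map and an undirected $\ell$-volcano is by definition such an image.

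First I would settle the case $n=0$. By Lemma~\ref{lem:l-volcano}, $\mathfrak{Y}_0$ is a directed $\ell$-volcano; moreover, as recalled in the proof of that lemma, when $\ell$ is inert in the CM field $K$ a depth-zero oriented supersingular elliptic curve has a primitive order $\cO$ whose conductor is prime to $\ell$, so $\ell\cO$ is a prime ideal of $\cO$ and every $\ell$-isogeny out of such a curve is descending. Hence the crater $Z_0$ of $\mathfrak{Y}_0$ has no edges. Next I would treat arbitrary $n$ via the projection $\pi_n\colon\mathfrak{Y}_n\to\mathfrak{Y}_0$, which is a depth-preserving covering by Remark~\ref{rem:level-preserving}. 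Writing $\VV_i$ for the $\pi_n$-preimage of the set of depth-$i$ vertices of $\mathfrak{Y}_0$, I would verify the four conditions of Definition~\ref{def:volcano} directly: every vertex of $\mathfrak{Y}_n$ has out-degree $\ell+1$ because $\pi_n$ is locally bijective; an edge of $\mathfrak{Y}_n$ from $\VV_i$ to $\VV_j$ maps to an edge of $\mathfrak{Y}_0$ between the corresponding depths, which forces $i-j\in\{\pm1\}$ or $i=j=0$; the subgraph induced on $\VV_0$ has no edges, since any such edge would project to an edge of $Z_0$, so this subgraph is a totally disconnected finite graph, hence an abstract crater in the sense of Definition~\ref{def:volcano}; and each $v\in\VV_i$ with $i\ge1$ has exactly one ascending edge and $\ell$ descending edges, which is the last assertion of Remark~\ref{rem:level-preserving}. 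Thus $\mathfrak{Y}_n$ is a directed $\ell$-volcano with disconnected crater, and applying the forgetful map yields the theorem.

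The only input genuinely specific to the inert hypothesis is the absence of horizontal $\ell$-isogenies at depth zero; this is the step I expect to require the most care, and it is already contained in the proof of Lemma~\ref{lem:l-volcano}. Everything else is the formal observation that a locally bijective, depth-preserving covering pulls back the defining data of an $\ell$-volcano, and no voltage-assignment input is needed because an edgeless crater remains edgeless under any covering.
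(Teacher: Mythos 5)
Your proposal is correct and follows essentially the same route as the paper: the absence of horizontal $\ell$-isogenies in the inert case (so the depth-zero subgraph is edgeless), combined with the depth-preserving covering of Remark~\ref{rem:level-preserving} and the structure already established for $\mathcal{Y}_0$ in Lemma~\ref{lem:l-volcano}, with no need for the voltage/Galois-covering lemmas. The paper merely states this more tersely by citing the arguments in the proof of Theorem~\ref{thm:structure1}, whereas you verify the conditions of Definition~\ref{def:volcano} directly.
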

\begin{proof}
     Remark \ref{rem:level-preserving} tells us that $\mathcal{Y}_n$ does not contain any horizontal edges {(see also \cite[Proposition 3.4]{arpin-et-all}}). Thus, the subgraph of depth-zero vertices is totally disconnected. It now follows {from the arguments in the proof of Theorem \ref{thm:structure1}} that $\mathcal{Y}_n$ is an $l$-volcano whose crater is totally disconnected. 
\end{proof}

\appendix

\section{An inverse problem for isogeny graphs of oriented supersingular elliptic curves}\label{app}
In \cite{pazuki}, Bambury--Campagna--Pazuki proved that if $G$ is a finite volcano graph, then there exist infinitely many distinct primes $p$ such that $G$ is isomorphic to a connected component of $\bG_l^p(0,1)$ that arises from ordinary elliptic curves defined over $\Fp$. We studied a similar question for abstract tectonic craters in \cite{LM1}. In particular, Theorem B of \textit{op. cit.} says that if $G$ is an abstract tectonic crater, then there exist infinitely many distinct prime numbers $p$ and $l$, and non-negative integers $N$ such that $G$ is isomorphic to a connected component of the crater of  $\bG_l^p(0,N)$ that arises from ordinary elliptic curves.

In this appendix, we extend the aforementioned theorem to craters of $\mathcal{G}_l^p(0,N)$.

\begin{theorem}\label{thm:inverse1}
    Let $G$ be an abstract tectonic crater of parameters $(\r,\s,\t,\cc)$. There exist infinitely many distinct prime numbers $l$ and $p$, and non-negative integers $N$ such that the crater of a connected component of $\fG_l^p(0,N)$  is isomorphic to $G$.
\end{theorem}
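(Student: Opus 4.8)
The plan is to reduce Theorem~\ref{thm:inverse1} to the corresponding statement for ordinary elliptic curves, namely \cite[Theorem~B]{LM1}, by exploiting the link between ordinary isogeny graphs over $\F_p$ and oriented supersingular isogeny graphs modulo an auxiliary prime. Concretely, suppose $p$ is a prime at which $G$ is realized as the crater of a connected component $\cC$ of $\bG_l^p(0,N)$ consisting of ordinary elliptic curves over $\F_p$. Each vertex $E$ of $\cC$ has CM by an order $\cO$ in an imaginary quadratic field $K$ in which $l$ splits; moreover $\cC$ determines (and is built from) the ideal-class action on horizontal $l$-isogenies. The idea is to pick a new prime $\pi$ that is inert in $K$ (equivalently, does not split in $K$) so that the reductions modulo $\pi$ of the curves in $\cC$ are supersingular elliptic curves over $\F_{\pi^2}$, each inheriting a canonical orientation $\iota\colon K\hookrightarrow\End(\cdot)\otimes\Q$ from the CM structure (this is the deuring-lift / CM-reduction mechanism used in \cite{colokohel,onuki,arpin-win}). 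Horizontal $l$-isogenies between CM curves reduce to horizontal $l$-isogenies between the oriented reductions, and because $l$ splits in $K$, Theorem~\ref{thm:structure1} guarantees that the component of $\fG_l^\pi(0,N)$ through these reductions is a tectonic $l$-volcano whose crater $Z$ is the depth-zero subgraph.

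First I would invoke \cite[Theorem~B]{LM1}: there exist infinitely many primes $p$ and $l$ and non-negative integers $N$ such that $G$ is the crater of a connected component of $\bG_l^p(0,N)$ arising from ordinary elliptic curves over $\F_p$. Fix such a triple $(p,l,N)$; let $K$ be the common CM field of that crater and $\cO$ the common primitive order, whose conductor is prime to $l$. Second, I would choose the auxiliary prime: by Chebotarev (applied to the ring class field of $\cO$, or simply to $K$), there are infinitely many primes $\pi\ne p,l$ that are inert in $K$ and prime to the conductor of $\cO$ and to $N$. For each such $\pi$, the CM curves in the crater, together with their level-$N$ structure and their distinguished $l$-isogenies, have good reduction at a prime above $\pi$, and the reductions are supersingular with a primitive orientation by $\cO$. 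Third, I would identify the crater $Z$ of the resulting component of $\fG_l^\pi(0,N)$ with the reduction of the original horizontal-isogeny graph: the reduction map is a bijection on these CM curves (injective by the theory of CM lifts / Deuring, surjective onto the depth-zero component because horizontal $l$-isogenies upstairs and downstairs are both torsors under the same group of $\cO$-ideals above $l$), and it is compatible with the level-$N$ structures and the blue/green coloring coming from the two primes of $\cO$ above $l$. Fourth, since Theorem~\ref{thm:structure1} already tells us $Z$ is an abstract tectonic crater, and a tectonic crater is determined up to isomorphism by its parameters $(\r,\s,\t,\cc)$, it suffices to check that $Z$ has the same parameters as $G$; this follows from the bijection in the previous step, which preserves the blue and green cycle lengths and their intertwining pattern. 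Letting $\pi$ range over the infinitely many admissible inert primes then yields infinitely many triples $(l,\pi,N)$ realizing $G$.

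The main obstacle I anticipate is the third step: making precise, and justifying, that the crater of the oriented supersingular graph $\fG_l^\pi(0,N)$ is exactly the reduction of the ordinary horizontal-isogeny graph, with matching coloring and level structure. This requires that reduction at $\pi$ be a color-preserving isomorphism from the ordinary crater onto the depth-zero component downstairs — one must rule out the appearance of extra depth-zero vertices (which is where inertness of $\pi$ in $K$, rather than just non-splitness, is used, so that the endomorphism ring of the reduction is an order of $l$-adically the same type) and must track the $\Gamma_1(N)$-structure through reduction (harmless since $\pi\nmid N$, exactly as in the proof of Lemma~\ref{lem:l-volcano}). The authors remark in the introduction that their techniques do not extend to level structures arising from the Verschiebung map, which is why the statement is restricted to $n=0$ and $N$ a genuine geometric-point level; I would follow that restriction, so the Verschiebung-level graphs $G_l^p(n,N)$ with $n\ge1$ play no role here. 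Once the reduction dictionary is set up, the remaining bookkeeping — checking the parameters transfer — is routine given the rigidity of abstract tectonic craters.
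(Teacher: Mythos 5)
Your route is genuinely different from the paper's. The paper does not pass through \cite[Theorem~B]{LM1} at all: it reruns that construction directly in characteristic zero, choosing the CM field $K$ and a CM curve $\bE$ over an abelian extension, picking auxiliary split primes $N',M',Q'$ with congruence conditions modulo $\s,\t,\r$, and using ray class field computations (\cite{shalit}) plus Chebotarev to find a prime $\mathfrak{L}$ above $l$ whose Frobenius has prescribed images in $\Gal(L_i/L)$; the orbit of a primitive torsion point under the two Frobenii then has exactly the pattern $(\r,\s,\t,\cc)$, and reduction modulo an inert prime $p$ produces the oriented supersingular realization in $\fG_l^p(0,N'M'Q')$. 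What the direct construction buys is complete control of all the data ($K$, $l$, $p$, $N$) and no need for any lifting step. Your approach, by contrast, treats the ordinary realization as a black box and transfers it to the supersingular world by a "lift and re-reduce" dictionary; if completed, it would be shorter modulo the literature, but it shifts all the work into your step~3, which the paper simply never has to confront.

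Two points in that step need more than the sketch you give. First, you cannot literally reduce the curves of the ordinary crater modulo $\pi$: they live in characteristic $p$. You must first pass to canonical (Deuring) lifts over the relevant ring class field, and the lifting has to carry the whole finite configuration at once -- the curves, the geometric $N$-torsion points (fine, since $p\nmid N$), and the two colored horizontal $l$-isogenies -- compatibly, before reducing at a prime above $\pi$. Your wording ("the CM curves in the crater \ldots have good reduction at a prime above $\pi$") conflates the characteristic-$p$ objects with their lifts; the argument should be phrased entirely on the characteristic-zero CM configuration, which is exactly the object the paper constructs directly. Second, the claim that reduction is a color- and level-preserving isomorphism onto the crater of the component of $\fG_l^\pi(0,N)$ needs real inputs: surjectivity onto the depth-zero subgraph uses that horizontal neighbours correspond to the ideals above $l$ and that the induced orientation is primitive (so you must also require $\pi$ prime to the conductor of $\cO$); injectivity needs the freeness of the $\mathrm{Cl}(\cO)$-action on primitively oriented supersingular curves (\cite{onuki,arpin-win}) together with the fact that orientation-preserving automorphisms are exactly the units of the primitive order, so that the unit ambiguity in matching the level-$N$ points is the same on both sides and the parameters $(\r,\s,\t,\cc)$ are preserved rather than collapsed. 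Finally, a small quantifier point: fixing one output of \cite[Theorem~B]{LM1} and varying $\pi$ gives infinitely many $p$ but a single $l$; to get infinitely many $l$ you must re-invoke that theorem, and you should check that the congruence conditions you want to impose on $\pi$ (e.g.\ if you insist on the standing hypothesis $p\equiv 1\pmod{12}$ of Section~5) are compatible with $\pi$ being inert in the CM field handed to you by the black box, which you do not control.
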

\begin{proof}
    Let $K$ be an imaginary quadratic field and let $\bE$ be an elliptic curve defined {over a finite abelian extension $K'$} with complex multiplication by the ring of integers $\mathcal{O}_K$ of $K$ {and such that $K'(\bE_{\textup{tors}})/K$ is an abelian extension}. {Such an extension exists following the discussion in \cite[\S II.1.4]{shalit}} . Let $L=K'(\bE[5])$ and let $p$ be a rational prime that is inert in $K/\Q$ and unramified in $L/\Q$. We fix three {distinct} prime numbers $N',M',Q'$ coprime to $5p\cdot\textup{cond}(\mathbf{E})$ (where $\textup{cond}(\mathbf{E})$ is the conductor of $\mathbf{E}$) that are split in $K$ and satisfy
    \begin{itemize}
        \item $N'\equiv 1 \pmod {\s}$,
        \item $M'\equiv 1\pmod{\t}$,
        \item $Q'\equiv 1\pmod{\r}$. 
    \end{itemize}
Note that {these three conditions do not contradict the fact that $N'$, $M'$ and $Q'$ should be split in $K$, which follows from the reciprocity law. The existence of these three primes follows from} Dirichlet's theorem on the infinitude of primes in arithmetic progression.

Let $\mathfrak{N}$ (resp. $\mathfrak{M}$, $\mathfrak{Q}$) be prime ideals of $\mathcal{O}_K$ lying above $N'$ (resp. $M'$, $Q'$). Let $L_1=L(E[N'M'\mathfrak{Q}])$ and $L_2=L(E[N'M'\overline{\mathfrak{Q}}])$. {By definition, the ray class field $K((5))$ lies in $L$. Furthermore, $\mathcal{O}^{{\times}}_K$ embeds into $(\mathcal{O}_{{K}}/(5))^\times$. Thus, $\Gal(L_1/L)$ is isomorphic to the subgroup of $ (\mathcal{O}_K/(5N'M'\mathfrak{Q}))^\times$ generated by elements that are congruent to $1$ modulo $(5)$.} Thus, \cite[Corollary II.1.7]{shalit} implies 
\begin{align}
&\Gal(L_1/L)\cong\Gal(K(5N'M'\mathfrak{Q})/K(5))\notag \\
&\cong (\mathcal{O}_K/\mathfrak{N})^\times \times (\mathcal{O}_K/\overline{\mathfrak{N}})^\times \times  (\mathcal{O}_K/\mathfrak{M})^\times \times (\mathcal{O}_K/\overline{\mathfrak{M}})^\times \times (\mathcal{O}_K/\mathfrak{Q})^\times,\label{eq:A1}\\
&\Gal(L_2/L)\cong\Gal(K(5N'M'\overline{\mathfrak{Q}})/K(5))    \notag\\
     &\cong (\mathcal{O}_K/\mathfrak{N})^\times \times (\mathcal{O}_K/\overline{\mathfrak{N}})^\times \times  (\mathcal{O}_K/\mathfrak{M})^\times \times (\mathcal{O}_K/\overline{\mathfrak{M}})^\times \times (\mathcal{O}_K/\overline{\mathfrak{Q}})^\times.\label{eq:A2}
\end{align}
By Tchebotarev's density theorem, there exist infinitely many prime ideals $\mathfrak{L}$ of $\mathcal{O}_K$ with the following properties:
\begin{itemize}
    \item $\mathfrak{L}$ splits in $L/K$.
    \item $\mathfrak{L}\neq \overline{\mathfrak{L}}$
    \item The image of the Frobenius of $\mathfrak{L}$ in $\Gal(L_1/L)$ {under the isomorphism \eqref{eq:A1}} is of the form $(a,1,1,b,d)$ with $\ord(a)=\s$, $\ord(b)=\t$ and $\ord(d^\s)=\r$.
    \item The {image of the} Frobenius of $\mathfrak{L}$ in $\Gal(L_2/L)$ {under the isomorphism \eqref{eq:A2}} is of the form $(a,1,1,b,d')$ with $\ord({d'}^\t)=\r$ and ${d'}^{\cc\t}=d^\s$.
\end{itemize}

Let $P$ be a primitive $5\mathfrak{N}\mathfrak{M}\mathfrak{Q}$-torsion point on $\bE$. Let $\sigma$ be the Frobenius of $\mathfrak{L}$ in $\Gal(L_1/K)$ and let $\tau$ be the Frobenius of $\overline{\mathfrak{L}}$. Let $H_1$ (resp. $H_2$) be the subgroup generated by $\sigma$ (resp. $\tau$). It follows from \cite[proof of Theorem 5.2]{LM1} that the orbit of $P$ under the action of $H_1$ (resp. $H_2$) contains $\r\s$ (resp. $\r\t$) elements. Furthermore, the intersection of the two orbits contains $\r$ elements and $\sigma^\s(P)=\tau^{\cc\t}(P)$. 

Note that $\mathbf{E}'=\mathbf{E}/\mathbf{E}[\mathfrak{L}]$ has complex multiplication by $\mathcal{O}_K$ as well {due to the fact that every endomorphism of $E$ sends an $\mathfrak{L}$-torsion point to another $\mathfrak{L}$-torsion point}. Indeed, every endomorphism of $\bE$ fixes the subgroup $\bE[\mathfrak{L}]$ and therefore defines an endomorphism of $\bE'$. Let $\mathfrak{p}$ be a prime ideal above $p$ in $L$. Then there is a horizontal isogeny \[\mathbf{E} \pmod{\mathfrak {p}}\to \mathbf{E}'\pmod{\mathfrak{p}}.\]
Thus, the ideals $\mathfrak{L}$ and $\overline{\mathfrak{L}}$ induce horizontal isogenies of $E:=\mathbf{E}\pmod{\mathfrak{p}}$.
Let $Q=P\pmod{\mathfrak{p}}$ and let $\iota$ be the natural map \[\textup{End}(\mathbf{E})\otimes \QQ\to \textup{End}(E)\otimes \QQ.\] Let $l$ be the rational prime below $\mathfrak{L}$. Let $\fY$ be the connected component of $\fG_l^p(0,N'M'Q')$ that contains $(E,\iota,Q)$. Then the crater of $\fY$ is an abstract tectonic {crater} with parameters $(\r,\s,\t,\cc)$.
\end{proof}

\begin{theorem}\label{thm:A2}
    Let $s$ be a non-negative integer. Then there exist infinitely many distinct prime numbers $p$ and $l$, and infinitely many non-negative integers $N$ such that the crater of a connected component of $\fG_l^p(0,N)$  is isomorphic to a directed cycle graph of length $s$. 
\end{theorem}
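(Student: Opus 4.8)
The plan is to follow the strategy of Theorem~\ref{thm:inverse1} but to force the auxiliary prime $\ell$ to be \emph{ramified} in the CM field, so that by Theorem~\ref{thm:structure2} the crater produced is a cycle graph rather than a tectonic crater, and then to control its length. The key geometric observation is that for $K=\QQ(\sqrt{-\ell})$ with $\ell$ an odd prime, $\ell$ is ramified in $K$ and the unique prime $\mathfrak{L}$ of $\cO_K$ above $\ell$ is \emph{principal}, namely $\mathfrak{L}=(\pi)$ with $\pi=\sqrt{-\ell}$ and $\pi^2=-\ell$. Consequently a horizontal $\ell$-isogeny of a depth-zero oriented curve $(E,\iota)$ with primitive order $\cO_K$ fixes $(E,\iota)$ up to isomorphism and, on a cyclic $\Gamma_1(N')$-level structure coming from a prime $\mathfrak{N}\mid N'$ of $\cO_K$, acts by multiplication by the scalar $\beta:=\pi\bmod\mathfrak{N}\in(\cO_K/\mathfrak{N})^\times\cong\FF_{N'}^\times$. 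Since $[\mathfrak{L}]$ is trivial in the class group, the crater of the component of $(E,\iota,Q')$ is carried by the single curve $E$ and is the directed cycle on the vertices $(E,\iota,\beta^jQ')$; taking account of the identification $(E,\iota,Q)=(E,\iota,-Q)$ (legitimate since $p\equiv1\pmod{12}$ forces $\mathrm{Aut}(E)=\{\pm1\}$ for supersingular $E$), its length is the order of $\beta$ in $\FF_{N'}^\times/\{\pm1\}$.

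With this in hand I would first reduce to $s\ge1$ (the case $s=0$ being vacuous) and choose the arithmetic data so that this length is exactly $s$. Fix a prime $N'\equiv1\pmod{2s}$ and an element $\zeta\in\FF_{N'}^\times$ of exact order $s$; because $N'\equiv1\pmod{2s}$, $\zeta$ is a nonzero square modulo $N'$. By Dirichlet's theorem choose an odd prime $\ell>3$ with $-\ell\equiv\zeta\pmod{N'}$. Then $N'$ splits in $K=\QQ(\sqrt{-\ell})$, so a prime $\mathfrak{N}\mid N'$ of $\cO_K$ exists with $\cO_K/\mathfrak{N}\cong\FF_{N'}$, and the image $\beta$ of $\pi$ satisfies $\beta^2=\zeta$. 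A short elementary computation using $\ord(\zeta)=s$ shows that $\ord(\beta)\in\{s,2s\}$, and that in either case $\beta$ has order exactly $s$ in $\FF_{N'}^\times/\{\pm1\}$.

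Next I would assemble the CM curve and its reduction exactly as in Theorem~\ref{thm:inverse1}: take $\bE$ over a finite abelian extension $K'$ of $K$ with complex multiplication by $\cO_K$ and with $K'(\bE_{\textup{tors}})/K$ abelian (\cite[\S II.1.4]{shalit}), choose a point $Q$ of order $N'$ in $\bE[\mathfrak{N}]\cong\cO_K/\mathfrak{N}$, and apply Tchebotarev's density theorem to find infinitely many rational primes $p$ with $p\equiv1\pmod{12}$, $p$ inert in $K$, $p$ unramified in $K'(\bE[N'])$, and $p\nmid \ell N'\,\textup{cond}(\bE)$; these conditions concern a single finite Galois extension and are mutually compatible. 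Reducing at a prime $\mathfrak{p}$ above such a $p$ in $K'(\bE[N'])$ yields, by Deuring's reduction theorem, a supersingular curve $E/\FF_{p^2}$ together with an orientation $\iota\colon K\hookrightarrow\End(E)\otimes\QQ$ whose primitive order is $\cO_K$ (so $(E,\iota)$ has depth zero) and a level-structure point $Q'=Q\bmod\mathfrak{p}$ of order $N'$. Then $(E,\iota,Q')$ is a depth-zero vertex of $\fG_\ell^p(0,N')$; since $\ell$ is ramified in $K$, Theorem~\ref{thm:structure2} applies, and by the first paragraph the crater of its connected component is a directed cycle of length equal to the order of $\beta$ in $\FF_{N'}^\times/\{\pm1\}$, namely $s$. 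Letting $N'$ run over a sequence of primes congruent to $1$ modulo $2s$ (and $\ell$, $p$ varying accordingly) yields infinitely many $p$, infinitely many $\ell$, and infinitely many $N=N'$, as required.

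The main obstacle I expect is the last geometric step: verifying precisely that the reduction of the $\mathfrak{L}$-isogeny of $\bE$ is the horizontal edge out of $(E,\iota,Q')$, that iterating it traces out the entire crater, and that it multiplies the level-structure point by $\beta$, with the $(E,\iota,Q)=(E,\iota,-Q)$ identification correctly folded in so that the graph-theoretic length becomes $\ord(-\ell\bmod N')=s$ on the nose. This is exactly the circle of ideas behind \cite[Theorem~5.2]{LM1} and the proof of Theorem~\ref{thm:inverse1}, so the tools are available, but the bookkeeping must be carried out carefully. I would also flag one structural point: $\ell$ genuinely cannot be fixed, because for a fixed $\ell$ the condition $\ord(-\ell\bmod N')=s$ can hold for only finitely many primes $N'$ (those dividing the fixed integer $(-\ell)^s-1$), so the infinitude of admissible $N$ forces one to vary $\ell$.
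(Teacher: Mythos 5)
Your proposal is correct, and its skeleton is the same as the paper's: both take $K=\QQ(\sqrt{-\ell})$, reduce a curve with CM by $\cO_K$ at a prime $p$ inert in $K$, and let the principal ramified prime $(\sqrt{-\ell})$ supply the horizontal self-isogeny, so that the crater length is the multiplicative order of $\sqrt{-\ell}$ acting on the level structure. Where you genuinely diverge is in how that order is pinned to $s$. The paper enlarges the level by an auxiliary prime conductor $\mathfrak{f}\mid(\sqrt{-\ell}-1)$, coprime to $6$, so that Frobenius acts trivially on the $\mathfrak{f}$-part and the identification $(E,\iota,Q)\sim(E,\iota,-Q)$ cannot shorten the orbit; it then fixes $\ell$ first and picks a prime $\mathfrak{N}$ with $\ord(\sqrt{-\ell}\bmod\mathfrak{N})=s$ (the existence of such an $\mathfrak{N}$, a Zsygmondy-type point, is left implicit). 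You instead keep a bare prime level $N'$, absorb the $\pm1$ identification into the count by working in $(\ZZ/N'\ZZ)^\times/\{\pm1\}$, and reverse the quantifiers: fix $N'\equiv 1\pmod{2s}$ and a square $\zeta$ of exact order $s$, then produce $\ell$ with $-\ell\equiv\zeta\pmod{N'}$ by Dirichlet. This makes both the existence and the infinitude of admissible triples $(p,\ell,N)$ transparent, and your remark that $\ell$ cannot be held fixed is accurate; your choice $p\equiv 1\pmod{12}$ also matches the standing hypothesis of Section 5 (and is exactly what justifies $\mathrm{Aut}(E)=\{\pm1\}$), whereas the paper's own proof takes $p\equiv 3\pmod 4$. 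One small point to make explicit in your order computation: when $s$ is even the case $\ord(\beta)=s$ cannot occur (it would force $\ord(\zeta)=s/2$), which is what rules out the image of $\beta$ having order $s/2$ in the quotient. The remaining bookkeeping you flag — that the reduction of the $(\sqrt{-\ell})$-isogeny is the horizontal edge multiplying the level point by $\beta$, and that the depth-zero subgraph of the component is exactly this cycle — is the same step the paper disposes of by declaring the remainder analogous to Theorem~\ref{thm:inverse1}, resting on Theorem~\ref{thm:structure2} and Lemma~\ref{lem:galois}.
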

\begin{proof}
    Let $p\equiv 3\pmod{4}$ be a prime number. Let $l\ge 5$ be a prime number distinct from $p$ such that $-l$ is not a square modulo $p$. Assume, in addition, that $l-1$ has a prime factor different from $2$ and $3$. {The existence of such a prime follows from quadratic reciprocity, Dirichlet's theorem on primes in arithmetic progression and the fact that the set of natural numbers whose only prime divisors are $2$ and $3$ has density zero.}  Put $K=\QQ(\sqrt{-l})$. Let $\mathfrak{f}$ be a prime ideal of $\mathcal{O}_K$ coprime to $6$ such that $\mathfrak{f}|(\sqrt{-l}-1)$. Let $\mathbf{E}$ be an elliptic curve defined over  $L=K(\mathfrak{f})$ ({the ray class field of $K$ of conductor $\mathfrak{f}$}) that has good reduction outside $\mathfrak{f}$ and complex multiplication by ${\mathcal{O}_K}$ (see for example \cite[Chapter II]{shalit}).
    
    By class field theory, the ideal $(\sqrt{-l})$ splits completely in $L/K$. Pick a prime ideal $\mathfrak{N}$ of $\mathcal{O}_K$ such that $\ord(\sqrt{-l})$ in $(\mathcal{O}_K/\mathfrak{N})^\times$ is $s$. Let $P$ be a primitive $\mathfrak{Nf}$-torsion point on $\bE$. Let $H$ be the subgroup generated by the Frobenius of $(\sqrt{-l})$ in $\Gal(L(E[\mathfrak{N}])/K)$. Then the orbit of $P$  under the action $H$ is given by $\left\{[\sqrt{-l}]^nP:n\in\ZZ\right\}$, so it consists of $s$ elements. The remainder of the proof is analogous to that of Theorem~\ref{thm:inverse1}.
\end{proof}

\begin{remark}The reason why we only consider the crater is that the edges between vertices of different depths do not necessarily satisfy a simple pattern, as is illustrated by the examples given in \S\ref{S:def-volcano}. Note that we have also removed the level structure arising from the Verschiebung map from consideration. This is because our method relies on working with elliptic curves defined over fields of characteristic zero, for which the Verschiebung map does not apply.

Furthermore, note that the authors in \cite{pazuki} were able to solve an inverse problem for an arbitrary volcano graph, and not just for craters. After fixing a volcano graph, the prime $\ell$ is uniquely determined (by the degree of the vertices). One might ask whether it would be possible to improve Theorems~\ref{thm:inverse1} and \ref{thm:A2} to prove that for any given $l$ (resp. $p$), there are infinitely many $p$ (resp. $l$) such that the conclusions of the theorems hold. To do so, we would need to find an imaginary quadratic field $K$ and integers $N',M',Q$ that satisfy all the conditions used in the proof of Theorem~\ref{thm:inverse1}. However, our method does not work in such generality.
\end{remark}

\bibliographystyle{amsalpha}
\bibliography{references}

\end{document}